\numberwithin{equation}{section}
\title[Geometricity for derived categories of algebraic stacks]{Geometricity
for derived categories\\ of algebraic stacks}
\begin{document}
\dedicatory{To Joseph Bernstein on the occasion of his 70th birthday}
\begin{abstract}
We prove that the dg category of perfect
complexes on 
a smooth, proper Deligne--Mumford stack over a field of
characteristic zero
is geometric in the sense of Orlov, and in particular smooth and proper.
On the level of triangulated categories, this means that
the derived category of 
perfect complexes embeds as an admissible subcategory into
the bounded derived category of coherent sheaves on a smooth,
projective variety.
The same holds for a smooth, projective, tame Artin stack over
an arbitrary field.
\end{abstract}


\author{Daniel Bergh \and Valery A.~Lunts \and Olaf M.~Schn{\"u}rer}

\address{       
  Mathematisches Institut\\
  Universität Bonn\\
  Endenicher Allee 60\\
  53115 Bonn\\ 
  Germany
}
\email{dbergh@gmail.com}

\address{
  Department of Mathematics\\
  Indiana University\\
  Rawles Hall\\
  831 East 3rd Street\\
  Bloomington, IN 47405\\
  USA
}
\email{vlunts@indiana.edu}

\address{       
  Mathematisches Institut\\
  Universität Bonn\\
  Endenicher Allee 60\\
  53115 Bonn\\ 
  Germany
}
\email{olaf.schnuerer@math.uni-bonn.de}
\subjclass[2010]{Primary 14F05; Secondary 14A20, 16E45}
\keywords{Differential graded category, Derived category, Algebraic stack,
Root construction, Semiorthogonal decomposition}
\maketitle
\setcounter{secnumdepth}{1}
\setcounter{tocdepth}{1}
\tableofcontents

\section{Introduction}

The derived category of a variety
or, more generally, of an algebraic stack,
is traditionally studied in the context of triangulated categories.
Although triangulated categories are certainly powerful,
they do have some shortcomings.
Most notably, the category of triangulated
categories seems to have no tensor product, no concept of duals,
and categories of triangulated functors have no obvious
triangulated structure. 
A remedy to these problems is to work instead with differential
graded categories, also called dg~categories. 
We follow this approach and replace 
the derived category 
$\Dpf(X)$ of perfect complexes on 
a variety or an algebraic stack $X$
by a certain dg category $\Ddgpf(X)$ which enhances
$\Dpf(X)$ in 
the sense that its homotopy category is
equivalent to $\Dpf(X)$. 

The study of dg~categories is central in noncommutative geometry, 
and dg~categories are sometimes thought of as
categories of sheaves on a hypothetical noncommutative space.
Although a variety in general cannot be recovered from its
associated dg~category,
several of its important homological invariants can.
These include the algebraic K-theory spectrum
as well as various variants of cyclic homology.
See \cite{keller2006} and~\cite{tabuada2011} for surveys on
dg~categories and their invariants.

In noncommutative algebraic geometry,
saturated dg~categories play a similar role
as smooth and proper varieties in usual commutative
algebraic geometry.
For example, the dg~category $\Ddgpf(X)$ associated to a
variety $X$ is saturated if and only if $X$ is smooth and proper.
The saturated dg~categories have an intrinsic characterization
as the homotopy dualizable objects in the category of all
dg~categories with respect to a certain localization
\cite[§5]{ct2012}. 

It is natural to ask how far a dg category, thought of as a
noncommutative space, is from being commutative.  
Motivated by the dominant role of smooth, projective varieties, 
Orlov recently introduced the notion of a geometric
dg category \cite{orlov2014}.
By definition, every dg category of the form 
$\Ddgpf(X)$, for $X$ a smooth, projective variety, is geometric,
and so are all its ``admissible'' subcategories (see
Definition~\ref{d:geometric-NC-scheme} for a precise definition).
Every geometric dg category is saturated.
Orlov asks whether in fact all saturated dg categories 
are geometric.
To our knowledge, this question is wide open.
Our main theorems say that we stay in the realm of geometric dg
categories if we consider certain algebraic stacks.
{
\renewcommand{\thetheorem}{\ref{thm:main-zero}}
\begin{theorem}
Let $X$ be a smooth, proper Deligne--Mumford stack over a
field of characteristic zero.
Then the dg~category $\Ddgpf(X)$ is geometric, and in particular saturated.
\end{theorem}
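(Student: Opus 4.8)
The plan is to deduce the geometricity of $\Ddgpf(X)$ from the geometricity of $\Ddgpf(Y)$ for $Y$ a smooth projective variety — which holds essentially by Definition~\ref{d:geometric-NC-scheme} — by expressing $\Ddgpf(X)$, after a controlled sequence of modifications of $X$, through such categories. The tools will be three facts from Orlov \cite{orlov2014}: a dg category admitting a semiorthogonal decomposition all of whose components are geometric is geometric; an admissible dg subcategory of a geometric dg category is geometric; and every geometric dg category is saturated. The last yields the ``in particular saturated'' for free once geometricity is known, and the first two let geometricity propagate along semiorthogonal decompositions in both directions. Throughout I assume $\Ddgpf(-)$ has been set up as a functorial dg enhancement of $\Dpf(-)$ on the relevant stacks, so that the semiorthogonal decompositions below may be read on enhancements.

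The passage from stacks to varieties runs through \emph{destackification} (Bergh): as $X$ is a smooth, proper, tame Deligne--Mumford stack over a field of characteristic zero, there is a tower $\widetilde X=X_n\to\dots\to X_0=X$ in which each morphism is either the blow-up of a smooth Deligne--Mumford stack along a smooth closed substack, or the $r$-th root stack of a smooth Deligne--Mumford stack along a smooth effective Cartier divisor, and such that the coarse space $\widetilde X_{\mathrm{cs}}$ of $\widetilde X$ is smooth and proper and $\widetilde X$ is a gerbe banded by a finite abelian group over an iterated root stack along a simple normal crossings divisor of $\widetilde X_{\mathrm{cs}}$. (If the available form of destackification needs abelian stabilizers, a preliminary reduction reaches that case.) For each morphism in the tower I invoke a semiorthogonal decomposition compatible with $\Ddgpf(-)$: Orlov's blow-up formula $\Ddgpf(\mathrm{Bl}_Z W)=\langle\Ddgpf(W),\Ddgpf(Z),\dots,\Ddgpf(Z)\rangle$ with one $\Ddgpf(W)$ and $c-1$ copies of $\Ddgpf(Z)$ ($c$ the codimension of $Z$), and the analogous root-stack formula $\Ddgpf(\sqrt[r]{D/W})=\langle\Ddgpf(W),\Ddgpf(D),\dots,\Ddgpf(D)\rangle$ with $r-1$ copies of $\Ddgpf(D)$. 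In each case $\Ddgpf(W)$ is a component, hence an admissible subcategory of $\Ddgpf$ of the source; composing down the tower, $\Ddgpf(X)$ is an admissible subcategory of $\Ddgpf(\widetilde X)$, and it remains to show that $\Ddgpf(\widetilde X)$ is geometric.

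This I would prove by induction on $\dim X$. First, $\Ddgpf(\widetilde X_{\mathrm{cs}})$ is geometric: $\widetilde X_{\mathrm{cs}}$ is a smooth, proper algebraic space over a field of characteristic zero, so Chow's lemma for algebraic spaces and resolution of singularities provide a proper birational morphism $f\colon Y\to\widetilde X_{\mathrm{cs}}$ with $Y$ a smooth projective variety; since both are smooth, $Rf_*\mathcal O_Y=\mathcal O_{\widetilde X_{\mathrm{cs}}}$, so $Lf^*$ is fully faithful with right adjoint $Rf_*$, realizing $\Ddgpf(\widetilde X_{\mathrm{cs}})$ as an admissible subcategory of the geometric $\Ddgpf(Y)$. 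Base-changing everything along $f$ (and, where needed, blowing up further so that divisors stay simple normal crossings) replaces $\widetilde X$ by a smooth, proper stack $\widetilde X'$ with $\widetilde X'\to\widetilde X$ proper birational, so that $\Ddgpf(\widetilde X)$ is admissible in $\Ddgpf(\widetilde X')$, where $\widetilde X'$ is a gerbe banded by a finite abelian group over an iterated root stack along a simple normal crossings divisor of the smooth projective variety $Y$. For such $\widetilde X'$ the gerbe decomposition presents $\Ddgpf(\widetilde X')$ as a direct sum of twisted derived categories of the iterated root stack; the root-stack formula, in its twisted form, peels these down to twisted derived categories of $Y$ together with twisted derived categories of smooth, proper Deligne--Mumford stacks of dimension $<\dim X$ (iterated root stacks over the closed strata of the divisor), which are geometric by the induction hypothesis applied in its twisted form (proven simultaneously); and twisted derived categories of the smooth projective variety $Y$ are themselves geometric, since a Severi--Brauer scheme realizing the twist is smooth projective and its derived category decomposes semiorthogonally into them. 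The base case $\dim X=0$ is subsumed: there $\widetilde X$ is a finite disjoint union of finite abelian gerbes over spectra of finite field extensions, and $\Ddgpf(\widetilde X)$ is a finite product of derived categories of perfect complexes of modules over finite-dimensional division algebras, geometric via Severi--Brauer varieties. By Orlov's gluing statement $\Ddgpf(\widetilde X')$ is geometric, hence so are $\Ddgpf(\widetilde X)$ and its admissible subcategory $\Ddgpf(X)$, and the theorem, saturatedness included, follows.

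The real obstacle I anticipate is establishing the root-stack semiorthogonal decomposition, and its twisted variant, \emph{on the dg enhancements} $\Ddgpf(-)$, functorially enough that the decompositions compose coherently along the destackification tower — together with the attendant setup of $\Ddgpf(-)$ as a functor on the relevant $2$-category of stacks and the lifting of triangulated semiorthogonal decompositions to the dg level; the twisted bookkeeping running alongside the induction is part of this. Secondary but still nontrivial are the reduction of the smooth, proper algebraic space $\widetilde X_{\mathrm{cs}}$ to a smooth projective variety with the embedded-resolution input needed to keep divisors simple normal crossings after base change, and, if needed, the reduction of destackification to the abelian-stabilizer case. Orlov's closure properties for geometric dg categories and the destackification theorem itself I treat as black boxes.
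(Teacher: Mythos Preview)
Your overall plan is in the right spirit, but it takes a substantially harder route than the paper, and one early step needs care: destackification as stated here (Theorem~\ref{thm:destack}) requires $X$ to contain a dense open algebraic space, which a general smooth proper Deligne--Mumford stack need not (think $BG$). You implicitly compensate by invoking the more general form of destackification whose output carries a residual abelian gerbe, but then you pay for that gerbe with the whole apparatus of twisted derived categories, a twisted root-stack semiorthogonal decomposition (which you would have to prove), Severi--Brauer varieties, and an induction on dimension run simultaneously in a twisted strengthening of the theorem.

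The paper sidesteps all of this with a trick you did not find. First a stacky Chow lemma (Proposition~\ref{pro:chow}) blows $X$ up to a \emph{projective} stack, hence a global quotient $[U/\GL_n]$; then, writing $\mathcal{E}$ for the associated rank-$n$ bundle, one passes to the projectivized bundle $P=\PP(\sheafEnd(\mathcal{E})\oplus\mathcal{O})$. The $\GL_n$-torsor sits as a dense open in $P$, so $P$ \emph{is} an orbifold and the simple form of destackification applies to it, producing an iterated root stack over a smooth projective \emph{scheme}---no gerbe, no twist. The components of Theorem~\ref{thm:semi-orthogonal-iterated} are then $\Ddgpf$ of smooth projective schemes, geometric by definition, and one application of Orlov's gluing (Theorem~\ref{thm:orlov-with-converse}) finishes; geometricity is then propagated back along the chain to $X$ via Proposition~\ref{p:pullback-full-faithful-admissible}, exactly the mechanism you use along your tower. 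No induction on dimension, no twisted categories, and the dg-enhancement worries you flag are discharged once and for all in Section~\ref{sec:diff-grad-enhancements}. Your route could very likely be pushed through, but the obstacles you correctly anticipate are real work that the projectivized-bundle reduction simply avoids.
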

}

This theorem can be seen as a noncommutative counterpart to
\cite[Corollary~4.7]{choudhury2012},
which states that the mixed motive of a
finite type, smooth Deligne--Mumford stack over a field of characteristic zero
is effective geometric.

In the preprint \cite{hlp2015}, the authors consider a
similar problem for certain stacks with positive dimensional
stabilizers and even for categories
of matrix factorizations on such stacks. 
They prove a geometricity result in a generalized sense
which involves 
infinite sums of dg categories of smooth Deligne--Mumford stacks
\cite[Theorem~2.7]{hlp2015}.  
As mentioned in Remark~2.9 of \emph{loc.\ cit.}\ our result
strengthens their Theorem~2.7 by replacing Deligne--Mumford
stacks with varieties.


We also give a version of our main theorem which is valid for
stacks over arbitrary fields.
Since we do not have resolution of singularities
over a field of positive characteristic,
we restrict the discussion to \emph{projective}
algebraic stacks (see Definition~\ref{def-projective}).
Indeed, even for a smooth, proper scheme $X$ over a field of
positive characteristic it is not clear whether $\Ddgpf(X)$
is geometric if $X$ is not projective.
Moreover, instead of Deligne--Mumford stacks we consider tame
algebraic 
stacks \cite{aov2008}.
Over a field of characteristic zero the
class of separated Deligne--Mumford stacks coincides with the class of
separated tame algebraic stacks, but in positive characteristic
tame stacks are usually better behaved.
For example, under mild finiteness assumptions
the tame algebraic stacks are scheme-like from a
noncommutative perspective
in the sense that their derived categories are
generated by a single compact object and the compact
objects coincide with the perfect
complexes~\cite[Theorem~3.1.1]{bv2003}, \cite[Theorem~A, Remark~4.6]{hr2014}.
{
\renewcommand{\thetheorem}{\ref{thm:main-tame}}
\begin{theorem}
Let $X$ be a tame, smooth, projective algebraic stack
over an arbitrary field.
Then the dg~category $\Ddgpf(X)$ is geometric, and in particular saturated.
\end{theorem}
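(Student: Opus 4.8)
The plan is to reduce, by means of \emph{destackification}, to derived categories of smooth projective varieties and of twisted forms thereof, exploiting two stability properties of geometricity: it is inherited by admissible subcategories (essentially by transitivity of admissibility), and, by Orlov's gluing theorem, the semiorthogonal gluing of finitely many geometric dg categories along perfect bimodules is again geometric (in particular a finite product of geometric dg categories is geometric). The geometric operations to be used are blowing up a smooth closed substack and forming the $r$-th root stack $\sqrt[r]{D/W}$ along a smooth effective Cartier divisor $D$; I would first record that both preserve smoothness, properness, tameness and projectivity, and that both induce semiorthogonal decompositions of $\Ddgpf$: for $\pi\colon\mathrm{Bl}_Z W\to W$ along a codimension-$c$ center one glues $L\pi^{*}\Ddgpf(W)$ with $c-1$ copies of $\Ddgpf(Z)$, and for $\pi\colon\sqrt[r]{D/W}\to W$ one glues $L\pi^{*}\Ddgpf(W)$ with $r-1$ copies of $\Ddgpf(D)$. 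In both cases $R\pi_{*}L\pi^{*}\simeq\mathrm{id}$ (for the root stack this uses tameness, so that the pushforward along the residual gerbe has no higher cohomology), whence $L\pi^{*}$ is fully faithful and realises $\Ddgpf(W)$ as an admissible subcategory of $\Ddgpf$ of the modified stack.

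Next I would apply the destackification theorem for smooth tame stacks with finite inertia to $X$ (projectivity of $X$ in the sense of Definition~\ref{def-projective} ensures finite inertia and the resolution property needed to run it): there is a sequence of projective stacky blow-ups $X=X_{0}\leftarrow X_{1}\leftarrow\dots\leftarrow X_{N}=\widetilde X$, each a blow-up in a smooth center or a root stack along a smooth divisor, such that the coarse moduli space $Y$ of $\widetilde X$ is a smooth projective variety and the structure morphism $\widetilde X\to Y$ is an iterated root stack along a simple normal crossings divisor, up to a residual gerbe banded by a finite linearly reductive group scheme. Iterating the embeddings $L\pi^{*}$ from the previous paragraph along the tower shows that $\Ddgpf(X)$ is an admissible subcategory of $\Ddgpf(\widetilde X)$, so it suffices to prove that $\Ddgpf(\widetilde X)$ is geometric.

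To that end, I would decompose $\Ddgpf(\widetilde X)$: first split off the residual gerbe, writing its derived category as a finite orthogonal sum of twisted derived categories; then apply the root-stack decomposition repeatedly to the iterated root stack over $Y$, which replaces each twisted category by copies of twisted derived categories of the strata of the normal crossings divisor in $Y$. These strata are smooth projective varieties, so every component of the resulting semiorthogonal decomposition of $\Ddgpf(\widetilde X)$ is a twisted derived category $\Dpf(V,\alpha)$ of a smooth projective variety $V$ (with $\alpha$ possibly trivial; the zero-dimensional case, where $\widetilde X=X$ is a finite gerbe over a product of finite field extensions and the components are derived categories of finite-dimensional division algebras, is the case $\dim V=0$). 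Each such $\Dpf(V,\alpha)$ embeds as an admissible subcategory of the bounded derived category of coherent sheaves on the Brauer–Severi scheme associated to an Azumaya algebra representing $\alpha$, and this Brauer–Severi scheme is again a smooth projective variety; hence $\Dpf(V,\alpha)$ is geometric. By the gluing theorem $\Ddgpf(\widetilde X)$ is geometric, so its admissible subcategory $\Ddgpf(X)$ is geometric, and since geometric dg categories are saturated this gives the ``in particular'' clause.

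The main obstacle I anticipate is establishing the semiorthogonal decomposition for root stacks at the level of dg enhancements, with enough functoriality in the pair $(W,D)$ to propagate it along the destackification tower and to combine it with (the dg form of) Orlov's blow-up formula — and, in the same vein, checking that all gluing bimodules arising this way are perfect, so that Orlov's geometricity-of-gluings theorem applies. A secondary, more bookkeeping-heavy, difficulty is the treatment of the residual gerbe over an arbitrary field: decomposing the derived category of a finite tame (possibly non-banded) gerbe into twisted pieces and identifying those with admissible subcategories of derived categories of honest smooth projective varieties via Brauer–Severi schemes, uniformly in the base field.
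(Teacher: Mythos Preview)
Your overall strategy---destackify, then use the semiorthogonal decompositions for stacky blowups and root stacks together with Orlov's gluing---is the paper's strategy too, but you diverge at one crucial point. You apply destackification directly to $X$; since $X$ need not be generically a scheme (think of $X=\B G$), the version of destackification stated in this paper (Theorem~\ref{thm:destack}) does not apply, and the more general version you implicitly invoke leaves you with a residual gerbe, forcing you into twisted sectors and Brauer--Severi varieties. The paper sidesteps this entirely: \emph{before} destackifying, it replaces $X$ by a projectivized vector bundle $P \to X$ chosen so that $P$ contains a dense open algebraic space (take a $\GL_n$-torsor $T \to X$ with $T$ an algebraic space, which exists precisely because a projective stack is a global quotient, and embed $T$ densely in $P$). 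Then $P$ is an orbifold, Theorem~\ref{thm:destack} applies as stated, and the output $Y \to Y_\coarse$ is a \emph{pure} iterated root stack in an snc divisor on a smooth projective scheme---no gerbe left over. Since $\L\pi^*$ embeds $\Ddgpf(X)$ admissibly in $\Ddgpf(P)$ and hence in $\Ddgpf(Y)$, geometricity of $\Ddgpf(Y)$ (from Theorem~\ref{thm:semi-orthogonal-iterated} and Orlov's gluing) descends to $X$. Your route is not wrong in principle, but over an arbitrary field the residual gerbe need not be abelian-banded and the twisted-sector bookkeeping you flag as ``secondary'' is genuine extra work; the projective-bundle trick trades all of it for one application of the projective bundle formula. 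As for your worry about perfection of the gluing bimodules: since the top stack is tame and proper over $\kk$, its $\Ddgpf$ is locally perfect by finiteness of coherent cohomology, and that is exactly the hypothesis Orlov's theorem requires---no bimodule-by-bimodule check is needed.
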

}

We specify the dg enhancement $\Ddgpf(X)$ of $\Dpf(X)$ we work
with in Example~\ref{expl:algebraic-stack-enhancement}.
A priori, there are other possible choices (cf.\ Remark~\ref{rem:drinfeld-quotient}), but it turns out that
they are all equivalent.
This follows from a recent result by Canonaco and~Stellari
\cite{canonaco-stellari-uniqueness-of-dg-enhancements}
which implies that the derived categories $\Dpf(X)$
for the stacks considered in the theorems above have
unique dg enhancements
(see~Remark~\ref{rem:Dpf-unique-enhancemenet}). 

Theorem~\ref{thm:main-tame} has the following equivalent
reformulation in terms of varieties with group actions.

{
\renewcommand{\thetheorem}{\ref{thm:main-tame}b}
\begin{theorem}
Let $U$ be a smooth, quasi-projective variety over a field $\kk$,
and let $G$ be a linear algebraic group over $\kk$ acting properly
on $U$.
Assume that the action admits a geometric quotient
$U \to U/G$ 
(in the sense of \cite[Definition~0.6]{mfk1994})
such that $U/G$ 
is projective over $\kk$.
Also assume that all stabilizers of the action are linearly reductive.
Then the dg~category enhancing the bounded derived
category $\D^\mathrm{b}(\Coh^G(U))$ of $G$-equivariant
coherent sheaves on $U$ is geometric, and in particular saturated.
\end{theorem}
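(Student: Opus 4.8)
The plan is to deduce this from Theorem~\ref{thm:main-tame} by passing to the quotient stack $X := [U/G]$, and to obtain the asserted equivalence by also running the dictionary in reverse. First I would check that $X$ meets all the hypotheses of Theorem~\ref{thm:main-tame}. Since $U$ is a smooth $\kk$-scheme and $U \to X$ is a $G$-torsor, hence a smooth surjection, the stack $X$ is smooth over $\kk$. Properness of the action means that $\Psi\colon G \times U \to U \times U$, $(g,u) \mapsto (gu,u)$, is proper; pulling the diagonal of $X$ back along the fppf cover $U \times U \to X \times X$ recovers exactly $\Psi$, so $X$ is separated, and base change shows the inertia $I_X \to X$ is proper as well. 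But $I_X \to X$ pulls back to the closed subscheme $\{(g,u) : gu = u\} \subseteq G \times U$ over $U$, which is affine over $U$; being proper and affine over $U$ it is finite, so $X$ has finite inertia. As the stabilizers are linearly reductive by assumption, \cite{aov2008} then shows that $X$ is tame. Finally, the action has finite stabilizers and a geometric quotient, so $U/G$ is the coarse moduli space of $X$ and is projective by hypothesis; together with the fact that $[U/G]$ is a global quotient stack with quasi-projective atlas $U$, this shows that $X$ is projective in the sense of Definition~\ref{def-projective}.

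Second, I would identify the relevant categories. Faithfully flat descent along the torsor $U \to X$ gives an equivalence $\Coh^G(U) \xrightarrow{\ \sim\ } \Coh(X)$, hence $\D^\mathrm{b}(\Coh^G(U)) \simeq \D^\mathrm{b}(\Coh(X))$. Since $X$ is smooth and Noetherian, every bounded complex of coherent sheaves is perfect, so $\D^\mathrm{b}(\Coh(X)) = \Dpf(X)$; on the level of dg enhancements this says that the (unique, cf.\ the introduction) dg enhancement of $\D^\mathrm{b}(\Coh^G(U))$ is the dg category $\Ddgpf(X)$ of Example~\ref{expl:algebraic-stack-enhancement}. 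Theorem~\ref{thm:main-tame} applied to $X$ now yields that $\Ddgpf(X)$ is geometric, and in particular saturated, which is the claim.

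For the word ``equivalent'' in the statement one also needs the converse, namely that every tame, smooth, projective algebraic stack $X$ over $\kk$ arises as such a quotient. This follows by reversing the dictionary above: a projective stack is a global quotient $[V/\mathrm{GL}_n]$ with $V$ quasi-projective and $\mathrm{GL}_n$ acting with a linearization; one takes $U = V$ and $G = \mathrm{GL}_n$, and smoothness, tameness, properness of the action and projectivity of the geometric quotient $U/G$ translate back into the corresponding properties of $X$.

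The step I expect to be the main obstacle is the bookkeeping around the word ``projective'': one must reconcile the hypothesis that $U/G$ is a projective geometric quotient with exactly what Definition~\ref{def-projective} demands of $X$ (a projective coarse space plus a generating sheaf, or being a suitable global quotient), and check, in both directions, that a $G$-linearized quasi-projective $U$ produces such data and conversely. Everything else — smoothness, separatedness, finiteness of inertia, tameness via \cite{aov2008}, and the descent identification of the categories — is a direct translation between the group-action picture and the stack picture, after which Theorem~\ref{thm:main-tame} does the remaining work.
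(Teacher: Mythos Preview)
Your proposal is correct and follows the same route as the paper's own proof, which simply points to Example~\ref{ex:git} and Remark~\ref{rem:dqc} together with the equivalence $\Coh([U/G]) \simeq \Coh^G(U)$; you have unpacked precisely the content of those references. Two minor remarks: the global quotient property of $[U/G]$ comes via an embedding $G \hookrightarrow \GL_n$ as in Example~\ref{ex:global-quotient}, not from $U$ being a quasi-projective atlas, and your third paragraph on the converse addresses the sentence preceding the theorem (``equivalent reformulation'') rather than anything in the theorem statement itself.
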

\begin{proof}
See Example~\ref{ex:git} and Remark~\ref{rem:dqc} together with
the fact that the category of coherent sheaves on the stack $[U/G]$ is
equivalent to the category of $G$-equivariant coherent sheaves on $U$.
\end{proof}
}

\noindent
Note that the requirement that the action be proper implies
that the stabilizers are finite.
In particular, the condition that the stabilizers be linearly
reductive is superfluous if our base field $\kk$ has
characteristic zero.
Also note that if $G$ is finite and $U$ is projective over $\kk$,
then the action is automatically proper and the existence of
a projective geometric quotient $U/G$ is guaranteed.

\subsection{Outline}
The proof of the main results primarily builds
on two results --
the destackification theorem by Bergh and Rydh
\cite{bergh2014}, \cite{br2015} and the gluing
theorem for geometric dg~categories by Orlov~\cite{orlov2014}.

The destackification theorem allows us to compare a smooth,
tame algebraic stack to a smooth algebraic space via a sequence of
birational modifications called \define{stacky blowups}.
We review this theorem as Theorem~\ref{thm:destack}
in Section~\ref{sec:geometry}.
In this section,
we also give the main geometric arguments of
the proofs of the main theorems.

Stacky blowups play a similar role in the study of the birational geometry
of tame stacks as do usual blowups for schemes.
They come in two incarnations: usual blowups and so-called
\define{root stacks}.
Root stacks are purely stacky operations which have no counterpart
in the world of schemes.
We review the notion of a root stack
in Section~\ref{sec:root-constructions}.

A stacky blowup modifies the derived category of a stack in
a predictable way.
More specifically, it induces a semiorthogonal decomposition on
the derived category on the blowup.
For usual blowups this is due to Orlov~(\cite{orlov1992},
\cite[Proposition~11.18]{huybrechts2006})
and for root stacks this is due to Ishii--Ueda~\cite{iu2011}.
In Section~\ref{sec:semiorthogonal},
we reprove the theorem by Ishii--Ueda as Theorem~\ref{thm:iishi-ueda},
but in a more general setting.
We also give a combinatorial description of the semiorthogonal
decomposition on an iterated root stack as
Theorem~\ref{thm:semi-orthogonal-iterated}.

In Section~\ref{sec:diff-grad-enhancements}, we provide the dg
categorical ingredients for the proofs of our main theorems.
We first introduce the dg enhancements $\Ddgpf(X)$ and lift certain
derived functors to these enhancements. Then we discuss 
geometric dg~categories and state Orlov's gluing theorem as
Theorem~\ref{thm:orlov-with-converse}. 

Some general facts concerning algebraic stacks, derived
categories, and semiorthogonal decompositions are assembled in
section~\ref{sec:preliminaries}.  
Appendix \ref{sec:bound-deriv-coherent} contains some results on
bounded derived categories of coherent sheaves on noetherian
algebraic stacks.

\subsection{Acknowledgments}
\label{sec:acknowledgment}
We thank David Rydh for detailed comments.
Daniel Bergh was partially supported by Max Planck Institute for Mathematics,
Bonn,
and by the DFG through SFB/TR 45.
Valery Lunts was partially supported by the NSA grant 141008.
Olaf Schn\"urer was partially supported by the DFG through a
postdoctoral fellowship and through SPP 1388 and SFB/TR 45.


\section{Preliminaries}
\label{sec:preliminaries}
In this section,
we fix our notation and our conventions for algebraic stacks
as well as their derived categories of sheaves.
We also review the notions of tame and projective stacks.
Finally, we review the notion of semiorthogonal decompositions
for triangulated categories.

\subsection{Conventions for algebraic stacks}
We will use the definition of algebraic space and algebraic stack
from the stacks project~\cite[\sptag{025Y}, \sptag{026O}]{stacks-project}.
In particular, we will always state all separatedness assumptions
explicitly.
The main results of this article concern tame algebraic
stacks which are separated and of finite type over a field $\kk$.
If $\kk$ is a field of characteristic zero,
the class of these stacks consists precisely of the
separated Deligne--Mumford stacks of finite
type over $\kk$. 
Thus the reader unwilling to work in full generality
could safely assume that all stacks are of the aforementioned kind.

Although algebraic stacks form a 2-category
we will follow the common practice to suppress their
2-categorical nature to simplify the exposition if
no misunderstanding is likely.
In particular, we will often say morphism instead of 1-morphism,
isomorphism instead of equivalence,
commutative instead of 2-commutative and cartesian
instead of 2-cartesian.

\subsection{Tame algebraic stacks}
An algebraic stack $X$ has \define{finite inertia} if
the canonical morphism $I_X \to X$ from its inertia stack is finite.
If $X$ has finite inertia,
then there is a canonical morphism $\pi\colon X \to
X_\coarse$ to the \define{coarse (moduli) space},
which is an algebraic space \cite{km1997, rydh2013}.
If $X$ is locally of finite type over some base algebraic space $S$,
then the morphism $\pi$ is proper.

\begin{definition}
An algebraic stack $X$ is called \define{tame} provided that the stabilizer
at each of its geometric points is finite and linearly reductive.
\end{definition}
If $X$ has finite inertia, tameness implies that the pushforward
$\pi_\ast\colon \Qcoh(X) \to \Qcoh(X_\coarse)$ is exact.
The converse implication holds if $X$ has finite inertia and is
quasi-separated~\cite[Corollary~A.3]{hall2014}.

\begin{remark}
Tame algebraic stacks are defined in \cite{aov2008} in a slightly less general
context.
We use the more general definition given in \cite{hall2014}.
\end{remark}

\begin{example}
Let $X$ be a separated algebraic stack of finite type over a field $\kk$.
Then $X$ has finite inertia provided that the stabilizer at each geometric point
is finite. 
If $\kk$ has characteristic zero, then $X$ is tame if and only if it
is a Deligne--Mumford stack.
If $\kk$ has characteristic $p > 0$, the
stack $X$ is tame if and only if the stabilizer group at each
geometric point is of the form
$\Delta \rtimes H$,
where $\Delta$ is a finite diagonalizable $p$-group and $H$
is a constant finite group of order prime to $p$.
\end{example}

\subsection{Projective algebraic stacks}
A definition of \define{projective} Deligne--Mumford
stack is suggested in \cite{kresch2009}.
Since we also work with some tame stacks which are not Deligne--Mumford stacks,
we will need to extend this definition slightly.
First we clarify what we mean by a global quotient stack.

An algebraic stack $X$ is a \define{global quotient stack}
if there exists a $\GL_n$-torsor $T \to X$,
for some non-negative integer $n$, 
such that $T$ is an algebraic space.

\begin{example}
\label{ex:global-quotient}
Let $\kk$ be a field and $G \subset \GL_n$ a linear algebraic group
over $\kk$.
Assume that $G$ acts on an algebraic space $U$ over $\kk$.
Let $T$ be the quotient of $\GL_n\times U$ by $G$,
with $G$ acting on the right on the factor $\GL_n$ via the inclusion $G \subset
\GL_n$ and on the left on $U$.
Then $T$ is an algebraic space and the obvious projection $T \to [U/G]$
is a $\GL_n$-torsor.
In particular, the stack quotient $[U/G]$ is a global quotient stack.
\end{example}

If a global quotient stack $X$ is separated over some
base algebraic space $S$ then the relative diagonal
$\Delta_{X/S}\colon X \to X\times_S X$
is affine and proper and hence finite.
In particular, such a stack has finite
inertia and therefore admits a coarse 
space $X \to X_\coarse$.

\begin{definition}
\label{def-projective}
Let $X$ be an algebraic stack over a field $\kk$.
We say that $X$ is \define{quasi-projective over $\kk$}
if it is a global quotient stack which is separated and of
finite type over $\kk$,
and its coarse space $X_\coarse$ is a quasi-projective scheme over $\kk$.
If in addition $X$ or, equivalently, $X_\coarse$
is proper over $\kk$,
we say that $X$ is \define{projective over~$\kk$}. 
\end{definition}

\begin{example}
\label{ex:git}
GIT-quotients by proper actions
(in the sense of \cite{mfk1994})
give rise to quasi-projective algebraic stacks.
Let $U$ be a quasi-projective variety over a field $\kk$
and let $G$ be a reductive linear algebraic group acting
properly on $U$.
Assume that $U$ admits a $G$-linearized line bundle
$\mathcal{L}$ such that $U$ is everywhere stable with respect
to $\mathcal{L}$ in the sense of \cite[Definition~1.7]{mfk1994}.
Then the GIT-quotient $U \to U/G$ is geometric and $U/G$
is quasi-projective.
Since geometric quotients by proper actions are universal among
algebraic spaces \cite[Corollary~2.15]{kollar1997},
the canonical morphism $[U/G] \to U/G$ identifies $U/G$
with the coarse space of the stack quotient $[U/G]$.
Since the action of $G$ on $U$ is proper,
the stack quotient $[U/G]$ is separated.
Hence $[U/G]$ is a quasi-projective stack in the sense of
Definition~\ref{def-projective}.

Conversely, every quasi-projective algebraic stack can be
obtained in this way.
Indeed, assume that $X$ is quasi-projective.
Since $X$ is a separated global quotient stack,
it is of the form $[U/\GL_n]$
where $\GL_n$ acts properly on an algebraic space $U$.
Denote the geometric quotient by
$q\colon U \to U/\GL_n \cong [U/\GL_n]_\coarse$.
Let $\mathcal{M}$ be an ample line bundle on $U/\GL_n$.
This pulls back to a $\GL_n$-linearized line bundle
$\mathcal{L} = q^\ast\mathcal{M}$ on $U$.
Since $q$ is affine (cf.~\cite[Remark~4.3]{kresch2009} or
\cite[Theorem~3.12]{kollar1997}),
the bundle $\mathcal{L}$ is ample and $U$ is quasi-projective.
Moreover, the space $U$ is everywhere stable with respect to
$\mathcal{L}$, since sections of $\mathcal{M}$ pull back to
invariant sections of $\mathcal{L}$.
\end{example}

\begin{example}
Let $U$ be a quasi-projective variety over a field $\kk$,
and let $G$ be a finite group scheme over $\kk$ acting on $U$.
Then the quotient $U/G$ is quasi-projective by
\cite[Exposé~V, Proposition~1.8]{SGA-1}
combined with graded prime avoidance (cf.\ \cite[\sptag{09NV}]{stacks-project}).
Since $U/G$ coincides with the coarse space of $[U/G]$,
it follows that $[U/G]$ is a quasi-projective stack in
the sense of Definition~\ref{def-projective}.
Moreover, since the natural morphism $[U/G] \to U/G$ is proper,
the stack $[U/G]$ is projective if and only if $U$ is projective.
\end{example}

As expected, we have the following permanence property for projective algebraic
stacks with respect to morphisms which are projective in the
sense of \cite[Definition~5.5.2]{EGAII}.
\begin{lemma}
\label{lem:projective-stack}
Let $f\colon X \to Y$ be a projective morphism of algebraic
stacks with $Y$ being quasi-compact and quasi-separated.
Assume that both $X$ and $Y$ have finite inertia.
Then the induced morphism $f_\coarse\colon X_\coarse \to Y_\coarse$
between the coarse spaces is projective.
In particular, if $Y$ is (quasi-)projective over a field $\kk$ in the
sense of Definition~\ref{def-projective},
then the same holds for~$X$.
\end{lemma}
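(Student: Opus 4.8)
The plan is to reduce the statement to the universal property of coarse spaces together with descent along a finite surjective morphism. First I would recall that a coarse space is formed by a proper morphism $\pi_X \colon X \to X_\coarse$ (resp.\ $\pi_Y \colon Y \to Y_\coarse$), that this formation commutes with flat base change on the coarse space, and — crucially under the finite inertia hypothesis — that $\pi_{X\ast}\mathcal{O}_X = \mathcal{O}_{X_\coarse}$. Since $f$ is projective, by definition it factors as a closed immersion $X \hookrightarrow \mathbb{P}^n_Y$ followed by the projection; in particular $f$ is proper, so the composite $Y_\coarse$-morphism $X \to Y \to Y_\coarse$ is proper, and by the universal property of the coarse space it factors uniquely through $f_\coarse \colon X_\coarse \to Y_\coarse$, which is therefore proper as well (properness descends along the surjection $\pi_X$). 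So the real content is that $f_\coarse$ is in addition quasi-projective, i.e.\ admits a relatively ample line bundle.

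The key step is to produce this relatively ample bundle. I would take a relatively very ample line bundle $\mathcal{L}$ on $X$ over $Y$ (pulled back from $\mathcal{O}(1)$ on $\mathbb{P}^n_Y$), and then pass to a sufficiently divisible tensor power $\mathcal{L}^{\otimes m}$ so that it descends to $X_\coarse$. The point is that a line bundle on $X$ descends to the coarse space $X_\coarse$ precisely when its restriction to each geometric fiber of $\pi_X$ is trivial, and since the stabilizer groups are finite (finite inertia), some power kills the characters by which they act; this is a standard fact for coarse spaces of stacks with finite inertia, and one can check descent fppf-locally on $X_\coarse$ after a suitable stratification, or invoke the corresponding statement in the literature on coarse moduli spaces. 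Having fixed such an $m$, write $\mathcal{L}^{\otimes m} = \pi_X^\ast \mathcal{M}$ for a line bundle $\mathcal{M}$ on $X_\coarse$. Then I claim $\mathcal{M}$ is ample relative to $f_\coarse$: amplitude can be checked after the finite surjective (hence affine and proper) base change $\pi_X$, since $\pi_{X}^{\ast}\mathcal{M} = \mathcal{L}^{\otimes m}$ is $f$-ample and $X \to X_\coarse \times_{Y_\coarse} Y$ is finite; more precisely, relative amplitude over $Y_\coarse$ can be tested over $Y$ (using that $\pi_Y$ is proper surjective and applying the valuative / cohomological criterion, or the numerical characterization, together with $\pi_{Y\ast}\mathcal{O}_Y = \mathcal{O}_{Y_\coarse}$), and over $Y$ it follows from $f$-amplitude of $\mathcal{L}^{\otimes m}$ by faithfully flat... no — by finite surjective descent of amplitude along $X \to X_\coarse\times_{Y_\coarse}Y$. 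Since $f_\coarse$ is proper and carries a relatively ample line bundle, it is projective.

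For the final sentence: if $Y$ is quasi-projective over $\kk$ in the sense of Definition~\ref{def-projective}, then $Y_\coarse$ is quasi-projective over $\kk$ by definition, and the composite of the projective morphism $f_\coarse \colon X_\coarse \to Y_\coarse$ with $Y_\coarse \to \operatorname{Spec}\kk$ is then projective over a quasi-projective base, hence $X_\coarse$ is quasi-projective over $\kk$; moreover $X$ is a separated global quotient stack of finite type over $\kk$ because it is projective, hence proper and separated, over the global quotient stack $Y$ (a torsor argument: a $\GL_n$-torsor $T \to Y$ with $T$ an algebraic space pulls back to a $\GL_n$-torsor $T\times_Y X \to X$, and $T\times_Y X$ is an algebraic space since $X \to Y$ is representable by... rather, since $X \to Y$ is projective it is in particular representable, so $T \times_Y X$ is an algebraic space). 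Thus $X$ is quasi-projective over $\kk$, and if $Y$ is moreover proper over $\kk$ then so is $X$, giving the projective case.

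The main obstacle I anticipate is the descent of a power of the relatively ample bundle to the coarse space and, relatedly, verifying that the descended bundle is genuinely $f_\coarse$-ample rather than merely $f_\coarse$-nef: the cleanest route is to reduce to the case where $Y$ itself is a scheme (even $Y = Y_\coarse$) by base change along $\pi_Y$, reducing to the well-documented situation of a projective morphism from a stack with finite inertia to a scheme, where the statement that the coarse space is projective over that scheme is essentially \cite{kresch2009} (and the descent of powers of ample bundles is part of that circle of ideas); the subtlety is purely that one must be careful that ``coarse space'' and ``relatively ample'' both interact well with the base change $Y \to Y_\coarse$, which uses the exactness of $\pi_{Y\ast}$ only through $\pi_{Y\ast}\mathcal{O}_Y=\mathcal{O}_{Y_\coarse}$ and the fact that coarse spaces commute with flat base change.
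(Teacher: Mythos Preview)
The paper gives no self-contained argument: it notes via \cite[Proposition~8.6]{rydh2015} that projectivity of $f$ over a quasi-compact quasi-separated base is equivalent to $f$ being proper and admitting an $f$-ample invertible sheaf, and then defers entirely to \cite[Proposition~2]{mo206117} (a MathOverflow post of Rydh), with \cite{kv2004} and \cite{olsson2012} mentioned as less general antecedents. Your outline---descend a sufficiently divisible power of the $f$-ample bundle to $X_\coarse$ and check that the descended bundle is $f_\coarse$-ample---is exactly the strategy of those references, so you are reconstructing their content rather than departing from the paper.

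The one place your execution stumbles is the amplitude check. The morphism $\pi_X \colon X \to X_\coarse$ is not finite (it is not even representable), and $X \to X_\coarse \times_{Y_\coarse} Y$ need not be finite either, so the ``finite surjective descent of amplitude'' manoeuvres you try do not apply as stated. You are right that this is the delicate point; the argument in \cite[Proposition~6.1]{olsson2012} and its generalization in \cite{mo206117} handles it by working \'etale-locally on the coarse space and using the explicit finite-quotient presentation $[U/G]$, rather than by any abstract descent principle for amplitude along $\pi_X$. Your treatment of the ``in particular'' clause---pulling back a $\GL_n$-torsor along the representable projective morphism $f$ to exhibit $X$ as a global quotient---is correct and is a detail the paper leaves implicit.
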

\begin{proof}
Note that since $Y$ is assumed to be quasi-compact and
quasi-separated, projectivity of $f$ is equivalent to $f$ being
proper and admitting an $f$-ample invertible sheaf (see \cite[Proposition~8.6]{rydh2015}).
Hence the statement of the lemma follows from \cite[Proposition
2]{mo206117}. 
Less general versions of the lemma can be found in \cite[Proof of
Theorem~1]{kv2004} and \cite[Proposition~6.1]{olsson2012}.
\end{proof}

\subsection{Derived categories and derived functors}
There are several equivalent ways to define quasi-coherent modules on
algebraic stacks.
We will follow \cite{lmb2000} and view quasi-coherent modules as
sheaves on the lisse--étale site.
The results on derived categories depend on the techniques
of cohomological descent as described in \cite{olsson2007} and~\cite{lo2008}.
A concise summary of these results is given in \cite[Section~1]{hr2014}.
We give a brief overview here.
Let $(X, \mathcal{O})$ be a ringed topos.
We use the notation
$
\Mod(X, \mathcal{O}),
$
for the abelian category of $\mathcal{O}$-modules in $X$ and
$
\D(X, \mathcal{O})
$
for its derived category.

Let $X$ be an algebraic stack.
We denote the topos of sheaves on the lisse--étale site by $X_\liset$.
If $X$ is a Deligne--Mumford stack, we denote the topos of sheaves
on the small étale site by $X_\et$.
In these situations, we use the short hand notation
$$
\Mod(X_\tau) := \Mod(X_\tau, \mathcal{O}_X),
\qquad
\D(X_\tau):=  \D(X_\tau, \mathcal{O}_X),
$$
where $\tau$ is either $\liset$ or $\et$.
By default, we will use the lisse-étale site when considering sheaves on
algebraic stacks and simply write $\Mod(X)$ instead of $\Mod(X_\liset)$ and
$\D(X)$ instead of $\D(X_\liset)$.

Recall that an $\mathcal{O}_X$-module is \define{quasi-coherent} if
it is locally presentable \cite[\sptag{03DL}]{stacks-project}.
We let $\Qcoh(X)$ denote the full subcategory of $\Mod(X)$ of quasi-coherent
modules and $\Dqc(X)$ the full subcategory of $\D(X)$ consisting of complexes
with quasi-coherent cohomology.
Since $\Qcoh(X)$ is a weak Serre subcategory of $\Mod(X)$,
the category $\Dqc(X)$ is a thick triangulated subcategory of $\D(X)$.

Also recall that a complex in $\Mod(X)$ is called
\define{perfect} if it is locally quasi-isomorphic to a bounded
complex of direct summands of finite free modules
\cite[\sptag{08G4}]{stacks-project}.
We denote by $\Dpf(X)$ the subcategory of $\D(X)$ consisting of perfect
complexes.
The category $\Dpf(X)$ is a thick triangulated subcategory of $\Dqc(X)$. 

\begin{remark}
\label{rem:DM-compare-liset-small-etale}
The reader willing to restrict the discussion to Deligne--Mumford stacks
could instead use $X_\et$ as the default topos when considering sheaves
on such a stack $X$.
In this situation, the correspondingly defined categories
$\Dqc(X_\et)$ and~$\Dpf(X_\et)$ are equivalent to
$\Dqc(X)$ and~$\Dpf(X)$ respectively.

Explicitly, the equivalences are constructed as follows.
Let $X$ be a Deligne--Mumford stack.
The inclusion of its small 
étale site into its lisse-étale site induces a morphism 
\begin{equation}
\label{eq:7}
\epsilon = (\epsilon^\ast, \epsilon_\ast) \colon
(X_\liset, \mathcal{O}_X) \ra (X_\et, \mathcal{O}_X)
\end{equation}
of ringed topoi,
where $\epsilon_\ast$ is the restriction functor.
Both functors $\epsilon^\ast$ and $\epsilon_\ast$ are exact,
and the equivalences are obtained by restriction of the induced
adjoint pair
$\epsilon^* \colon \D(X_\et) \ra \D(X_\liset)$
and 
$\epsilon_* \colon \D(X_\liset) \ra \D(X_\et)$. 
\end{remark}


Let $f\colon X \to Y$ be a morphism of algebraic stacks.
Assume, for simplicity, that $f$ is \define{concentrated}.
This means that $f$ is quasi-compact, quasi-separated and
has a boundedness condition on its cohomological dimension
$Y$~\cite[Definition~2.4]{hr2014}.
For our needs, it suffices to know that a quasi-compact
and quasi-separated morphism of algebraic stacks is concentrated
provided that its fibers are tame (cf.~\cite[Theorem~2.1]{hr2015}).
We get induced adjoint pairs of functors  
$$
f^\ast\colon \Qcoh(Y) \to \Qcoh(X), \qquad
f_\ast\colon \Qcoh(X) \to \Qcoh(Y)
$$
and
$$
\L f^\ast\colon \Dqc(Y) \to \Dqc(X), \qquad
\R f_\ast\colon \Dqc(X) \to \Dqc(Y).
$$
Here $f^\ast$, $f_\ast$ and $\R f_\ast$ are simply the restrictions
of the corresponding functors on $\Mod(X)$,
$\Mod(Y)$ and $\D(X)$ (\cite[Theorem~2.6.(2)]{hr2014}).

\begin{remark}
\label{rem:left-derived-liset}
It requires some work to see that the functor
$\L f^\ast \colon \Dqc(Y) \to \Dqc(X)$ actually exists.
This is due to the fact that the naturally defined
adjoint pair $(f^{-1}, f_\ast)$ does not induce
a morphism 
$(X_\liset, \mathcal{O}_X) \to (Y_\liset, \mathcal{O}_Y)$
of ringed topoi,
owing to the fact that $f^{-1}$ in general is not exact.
Hence, we do not get a functor $\L f^\ast \colon \D(Y) \to \D(X)$
from the general theory.
\end{remark}

\begin{remark}
\label{rem:left-derived-et}
If $f\colon X \to Y$ is a morphism of Deligne--Mumford stacks,
then the pair $(f^{-1}, f_\ast)$ does induce a morphism
$(X_\et, \mathcal{O}_X) \to (Y_\et, \mathcal{O}_Y)$
of ringed topoi.
Hence we do get a functor
$\L f^\ast \colon \D(Y_\et) \to \D(X_\et)$.
Furthermore, its restriction to $\Dqc(Y_\et)$
is compatible with  
$\L f^\ast \colon \Dqc(Y) \to \Dqc(X)$
via the equivalences described in Remark~\ref{rem:DM-compare-liset-small-etale}.
\end{remark}

\begin{remark}
\label{rem:dqc}
Let $X$ be an algebraic stack.
Then the category $\Qcoh(X)$ is a Grothendieck abelian category.
In particular, the category of complexes of quasi-coherent modules
has enough h-injectives and the derived category $\D(\Qcoh(X))$
has small hom-sets.

There is an obvious triangulated functor
\begin{equation}
  \label{eq:10}
  \D(\Qcoh(X)) \to \Dqc(X) 
\end{equation}
induced by the
inclusion $\Qcoh(X) \subset \Mod(X)$.
Assume that $X$ is quasi-compact, separated and has finite stabilizers.
In particular, this implies that $X$ has finite, and hence affine, diagonal.
Then the functor \eqref{eq:10} is an equivalence of
categories.
This follows from \cite[Theorem~1.2]{hnr2014}
and~\cite[Theorem~A]{hr2014}.

Assume that $X$, in addition, is regular.
In particular, this includes the stacks considered in the main theorems of this
article.
Then the obvious functor induces an equivalence
$\D^{\mathrm{b}}(\Coh(X)) \cong \Dpf(X)$ by
Remark~\ref{rem:D-Coh-and-DbCoh=Dpf}.
In particular, a complex of $\mathcal{O}_X$-modules is perfect if
and only 
if it is
isomorphic in $\Dqc(X)$ to a bounded complex of coherent modules.
\end{remark}

\begin{remark}
\label{rem:dqc2}
Let $X \to Y$ be a concentrated morphism of
quasi-compact stacks which are separated and have finite stabilizers.
Then the derived pushforward $\R f_\ast\colon\D(\Qcoh(X)) \to
\D(\Qcoh(Y))$ corresponds to $\R f_\ast\colon\Dqc(X) \to \Dqc(Y)$ under the
equivalences mentioned in
Remark~\ref{rem:dqc}, see \cite[Corollary~2.2]{hnr2014}.
\end{remark}

\subsection{Semiorthogonal decompositions}
We recall the definition of admissible subcategories and semiorthogonal
decompositions of triangulated categories
(cf.~\cite{bondal-kapranov-representable-functors},
\cite[Appendix~A]{valery-olaf-matfak-semi-orth-decomp}).

\begin{definition}
Let $\mathcal{T}$ be a triangulated category.
A \define{right} (resp.\ \define{left}) \define{admissible
subcategory of} $\mathcal{T}$ 
is a strict full triangulated subcategory $\mathcal{T}'$ of $\mathcal{T}$
such that the inclusion functor 
$\mathcal{T}' \to \mathcal{T}$ admits a right (resp.\ left) adjoint.
An \define{admissible} subcategory is a subcategory that is both
left and right admissible.
\end{definition}
\begin{definition}
A sequence $(\mathcal{T}_1, \ldots, \mathcal{T}_r)$ of
subcategories of
$\mathcal{T}$ is called \define{semiorthogonal} provided that
$\Hom_\mathcal{T}(T_i, T_j) = 0$ for all objects
$T_i \in \mathcal{T}_i$ and $T_j \in \mathcal{T}_j$ whenever $i > j$.
If, in addition, all $\mathcal{T}_i$ are strict full triangulated
subcategories and the category $\mathcal{T}$ coincides with its smallest
strict full triangulated subcategory containing all the 
$\mathcal{T}_i$, 
then we say that the sequence forms a
\define{semiorthogonal decomposition} of $\mathcal{T}$
and write
$$
\mathcal{T} = \langle \mathcal{T}_1, \ldots, \mathcal{T}_r \rangle.
$$
We say that a sequence $\Phi_1, \ldots, \Phi_r$ of 
triangulated 
functors with codomain $\mathcal{T}$
forms a \define{semiorthogonal decomposition} of $\mathcal{T}$ 
and write $\mathcal{T} = \langle\Phi_1, \ldots, \Phi_r\rangle$
if all $\Phi_i$ are 
full and faithful
and the essential images of the functors $\Phi_1,
\dots, \Phi_r$ form a
semiorthogonal decomposition of $\mathcal{T}$.
\end{definition}


\section{Root constructions}
\label{sec:root-constructions}
The root construction can be seen as a way of adjoining
roots of one or several divisors
on a scheme or an algebraic stack.
It has been described in several sources,
e.g.~\cite[§2]{cadman2007}, \cite[§2.1]{bc2010},
\cite[§1.3]{fmn2010} and~\cite[Appendix~B]{agv2008}.
We recall its definition along with some of its basic properties.
Since most of these basic properties are already described in
the sources mentioned above or trivial generalizations, 
we will omit most of the proofs.

It is straightforward to define the root stack in terms of its
generalized points (cf.~\cite[Remark~2.2.2]{cadman2007}),
but the most economical definition seems to use the
\define{universal root construction}.
Let $r$ be a positive integer and consider the commutative diagram
\begin{equation}
\label{eq:universal-root-diagram}
\xymatrix{
\B\GGm \ar[r]^-{\iota} \ar[d]_-{\rho}
& 
[\AA^1/\GGm] \ar[d]^-{\pi}
\\
{\B\GGm} \ar[r]
&
{[\AA^1/\GGm].}
}
\end{equation}
Here $[\AA^1/\GGm]$ denotes the stack quotient of
$\AA^1 = \Spec \ZZ[x]$ by $\GGm$ acting by multiplication.  
The maps $\rho$ and $\pi$ are induced by the maps $\GGm \to \GGm$
and $\AA^1 \to \AA^1$ taking the coordinate $x$ to its $r$-th
power $x^r$.  
Note that the diagram above is not cartesian if $r>1$.

Recall that the stack $[\AA^1/\GGm]$ parametrizes pairs consisting of
a line bundle together with a global section.

\begin{definition}
\label{def:root-construction}
Let $X$ be an algebraic stack and $E$ an effective Cartier divisor
\cite[\sptag{01WR}]{stacks-project} on $X$.
Consider the morphism $f_E\colon X \to [\AA^1/\GGm]$
corresponding to the line bundle $\mathcal{O}_X(E)$
together with the canonical global section $\mathcal{O}_X \to \mathcal{O}_X(E)$.
Given a positive integer $r$,
we construct the \define{root diagram}
\begin{equation}
\label{eq:root-diagram}
\xymatrix{
{r^{-1}E} \ar[r]^-{\iota} \ar[d]_-{\rho}
& 
{X_{r^{-1}E}} \ar[d]^-{\pi}
\\
{E} \ar[r]^{\kappa}
&
{X}
}
\end{equation}
as the base change of the universal root
diagram~(\ref{eq:universal-root-diagram})
along the morphism $f_E$.
The stack $X_{r^{-1}E}$ is called the \define{$r$-th root stack}
of $X$ with respect to $E$. 
The notation for the divisor $r^{-1}E$ is motivated by the fact
that $r \cdot (r^{-1}E)=\pi^*E$.
We refer to this construction as the \define{root construction}
with respect to the datum $(X, E, r)$.
\end{definition}

The next example gives a local description of a root stack.
\begin{example}
\label{ex:root-local-description}
Assume that $X = \Spec R$ is affine and the effective Cartier
divisor $E \hookrightarrow X$ corresponds to a
ring homomorphism $R \to R/(f)$ where $f\in R$ is a regular element.
Then the $r$-th root construction yields
\begin{equation}
\label{eq:local-coordinates}
r^{-1}E = \B\rmu_r\times E, \qquad
X_{r^{-1}E} = [\Spec R'/\rmu_r], \qquad R' = R[t]/(t^r-f)
\end{equation}
where $\rmu_r$ denotes the group scheme of $r$-th roots of unity.
The $\rmu_r$-action on $\Spec R'$ corresponds to the
$\mathbb{Z}/r\mathbb{Z}$-grading on $R'$ with $R$ in degree zero
and $t$ homogeneous of degree~1.
The closed immersion $r^{-1}E \hookrightarrow X_{r^{-1}E}$ corresponds
to the ideal generated by $t$. 
\end{example}

\begin{proposition}
\label{prop:basic-root}
The root construction described in Definition~\ref{def:root-construction}
has the following basic properties:
\begin{enumerate}
\item
  \label{enum:cartier}
The morphism $\iota$ in diagram~\eqref{eq:root-diagram} is a closed
immersion realizing $r^{-1}E$ as an effective Cartier divisor on $X_{r^{-1}E}$.
\item
  \label{enum:pi-proper-fp-birat}
The morphism $\pi$ is a universal homeomorphism which is proper,
faithfully flat and birational with exceptional locus contained in $r^{-1}E$.
\item
  \label{enum:rho}
The morphism $\rho$ turns $r^{-1}E$ into a $\rmu_r$-gerbe over $E$ with
trivial Brauer class.
In particular, the morphism $\rho$ is smooth.
\item
  \label{enum:coarse}
  If $X$ is
  an algebraic space, then $\pi$ identifies $X$ with the coarse
  space 
  of $X_{r^{-1}E}$.
  More generally, if $X$ is an algebraic stack then the morphism $\pi$
  is a \emph{relative} coarse space.
  In particular, if
  $X$ is an algebraic stack having a coarse space $X \ra
  X_\coarse$, then 
  the composition $X_{r^{-1}E} \ra X \ra X_\coarse$ is a coarse
  space for $X_{r^{-1}E}$.  
\item
  \label{enum:tame}
The pushforward $\pi_\ast \colon \Qcoh(X_{r^{-1}E}) \ra \Qcoh(X)$ is exact,
and $X_{r^{-1}E}$ is tame provided that the same holds for $X$.
\item
If $X$ is a Deligne--Mumford stack and $r$ is invertible in $\mathcal{O}_X$,
then $X_{r^{-1}E}$ is a Deligne--Mumford stack.
\end{enumerate}
\end{proposition}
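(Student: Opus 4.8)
The plan is to verify the six properties of Proposition~\ref{prop:basic-root} by reducing everything to the \'etale-local situation described in Example~\ref{ex:root-local-description}, where $X_{r^{-1}E} = [\Spec R'/\rmu_r]$ with $R' = R[t]/(t^r - f)$. Since the root construction is defined as a base change of the universal root diagram~\eqref{eq:universal-root-diagram} along $f_E$, each morphism in~\eqref{eq:root-diagram} is the base change of the corresponding morphism in~\eqref{eq:universal-root-diagram}, so it suffices to check that the relevant property is stable under base change and holds for the universal diagram; and for the universal diagram one may work with the explicit presentation. First I would treat~\eqref{enum:cartier}: the vanishing locus of $t$ on $\Spec R'$ is cut out by the nonzerodivisor $t$ (as $R'$ is free over $R$ with basis $1, t, \dots, t^{r-1}$), and this descends to exhibit $r^{-1}E$ as an effective Cartier divisor on the quotient stack; alternatively $\iota$ is the base change of the closed immersion $\B\GGm \to [\AA^1/\GGm]$ which is the universal effective Cartier divisor.

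Next I would handle~\eqref{enum:pi-proper-fp-birat} and~\eqref{enum:coarse} together. Properness and faithful flatness of $\pi$ follow from the corresponding statements for $[\AA^1/\GGm] \to [\AA^1/\GGm]$ together with stability under base change; in coordinates, $R \to R'$ is finite free of rank $r$, hence faithfully flat, and $[\Spec R'/\rmu_r] \to \Spec R$ is proper since $\Spec R' \to \Spec R$ is. That $\pi$ is a universal homeomorphism and birational with exceptional locus in $r^{-1}E$ is seen from the local picture: away from $f = 0$ the ring map $R \to R'$ inverts to an isomorphism after adjoining the $r$-th root, more precisely $\pi$ restricts to an isomorphism over $X \setminus E$ because there the section defining $f_E$ is a trivialization, so the square~\eqref{eq:root-diagram} becomes cartesian over $X \setminus E$ and $\GGm \to \GGm$, $x \mapsto x^r$, becomes an isomorphism of stacks modulo the source being a $\rmu_r$-gerbe — one should be slightly careful here and phrase it as: $\pi$ is an isomorphism over $X \setminus E$ after passing to coarse spaces, which is enough. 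For~\eqref{enum:coarse}, when $X$ is an algebraic space the coarse space of $[\Spec R'/\rmu_r]$ is $\Spec (R')^{\rmu_r} = \Spec R = X$ by the local computation, and this glues; the relative coarse space statement and the composition statement then follow formally from the universal property of coarse spaces.

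For~\eqref{enum:rho}, the morphism $\B\GGm \to \B\GGm$ induced by $x \mapsto x^r$ is, after base change along $E \to X$, the classifying map of the $\rmu_r$-gerbe structure: locally $r^{-1}E = \B\rmu_r \times E$ by~\eqref{eq:local-coordinates}, which is visibly a trivial $\rmu_r$-gerbe over $E$, so the Brauer class is trivial; smoothness of $\rho$ is then immediate since $\B\rmu_r \to \Spec\ZZ$ is smooth (as $\rmu_r$ is a finite flat, indeed smooth away from primes dividing $r$ — but $\B\rmu_r$ is smooth over $\Spec\ZZ$ regardless since $\rmu_r$ is finite locally free, so $\B\rmu_r$ is smooth as a quotient stack). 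For~\eqref{enum:tame}, exactness of $\pi_\ast$ follows because $\pi$ is affine after the faithfully flat base change $X_{r^{-1}E} \to [\Spec R'/\rmu_r] \to \Spec R$ and, in the local model, $\pi_\ast$ on quasi-coherent sheaves is taking the $\rmu_r$-invariants of an $R'$-module, equivalently the degree-zero part of a $\ZZ/r$-graded module, which is exact; that $X_{r^{-1}E}$ is tame when $X$ is follows since tameness is \'etale-local on the coarse space and in the local model the stabilizers are extensions of stabilizers of $X$ by subgroups of $\rmu_r$, and $\rmu_r$ is linearly reductive (its category of representations is semisimple, being $\ZZ/r$-graded vector spaces), so extensions of linearly reductive finite group schemes by linearly reductive ones are linearly reductive; invoking \cite{hall2014} or \cite{aov2008} for the permanence of tameness under this kind of base change is cleanest. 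Finally, if $X$ is Deligne--Mumford and $r$ is invertible on $X$, then $\rmu_r$ is \'etale over $\mathcal{O}_X$, so $[\Spec R'/\rmu_r]$ has unramified, hence \'etale after adjusting, diagonal, giving a Deligne--Mumford root stack.

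The main obstacle I anticipate is being careful about the non-cartesianness of the universal root diagram~\eqref{eq:universal-root-diagram} for $r > 1$: one cannot simply say that~\eqref{eq:root-diagram} is a base change of a cartesian square, so the birationality and exceptional-locus claims in~\eqref{enum:pi-proper-fp-birat}, as well as the precise gerbe statement in~\eqref{enum:rho}, require genuinely comparing $X_{r^{-1}E}$ with $X$ via the explicit affine-local model and checking compatibility of these local models under \'etale base change on $X$. All the remaining verifications are routine descent arguments or direct citations, so I would organize the write-up to dispatch those quickly and concentrate the actual work on the local model and its globalization.
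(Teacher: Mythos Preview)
The paper does not actually prove this proposition: immediately before Definition~\ref{def:root-construction} the authors write that ``most of these basic properties are already described in the sources mentioned above or trivial generalizations,'' and no proof is given in the text. Your strategy---reducing to the explicit affine-local model of Example~\ref{ex:root-local-description} together with base change from the universal root diagram~\eqref{eq:universal-root-diagram}---is precisely how the cited sources (Cadman, Fantechi--Mann--Nironi, Abramovich--Graber--Vistoli) argue, so your approach is the standard one and is correct in outline.

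Two points deserve tightening. In part~\ref{enum:pi-proper-fp-birat} your hesitation is unnecessary: over $X\setminus E$ the classifying morphism $f_E$ lands in the open substack $[\GGm/\GGm]\simeq\Spec\ZZ\subset[\AA^1/\GGm]$, and the $r$-th power map restricts to the identity there, so $\pi$ is an honest isomorphism over $X\setminus E$, not merely an isomorphism on coarse spaces. In part~\ref{enum:rho} the justification ``$\rmu_r$ is finite locally free, so $\B\rmu_r$ is smooth as a quotient stack'' is not valid as stated---finite local freeness of $G$ does not by itself force $\B G$ to be smooth (and indeed $\rmu_r$ is not smooth when $r$ is not invertible). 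What does work is the Kummer sequence $1\to\rmu_r\to\GGm\to\GGm\to 1$: the induced map $\GGm\to\B\rmu_r$ is smooth (its pullback along the fppf atlas $\Spec\ZZ\to\B\rmu_r$ is $\GGm\to\Spec\ZZ$), so $\GGm$ is a smooth atlas for $\B\rmu_r$ and hence $\B\rmu_r$, and therefore $\rho$, is smooth.
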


The notion of projectivity is well-behaved under taking roots of
effective Cartier divisors.
\begin{lemma}
\label{lem:projective-root}
Let $X$ be a quasi-projective algebraic stack over a field $\kk$,
and let $E$ be an effective Cartier divisor on $X$.
Then the root stack $X_{r^{-1}E}$ is quasi-projective over $\kk$ for any positive
integer $r$. 
In particular, if $X$ is projective over $\kk$, then so is $X_{r^{-1}E}$.
\end{lemma}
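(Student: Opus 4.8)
The plan is to reduce the quasi-projectivity of $X_{r^{-1}E}$ to the three defining conditions of Definition~\ref{def-projective}: that it be a global quotient stack, that it be separated and of finite type over $\kk$, and that its coarse space be a quasi-projective scheme. First I would record that $\pi\colon X_{r^{-1}E}\to X$ is proper, hence of finite type, so composing with the structure morphism $X\to\Spec\kk$ shows $X_{r^{-1}E}$ is of finite type over $\kk$. Separatedness follows because $\pi$ is proper (in particular separated) and $X$ is separated over $\kk$. For the coarse space, I would invoke Proposition~\ref{prop:basic-root}\eqref{enum:coarse}: since $X$ is a quasi-projective stack it admits a coarse space $X\to X_\coarse$ with $X_\coarse$ quasi-projective over $\kk$, and part~\eqref{enum:coarse} then tells us that the composition $X_{r^{-1}E}\to X\to X_\coarse$ is a coarse space for $X_{r^{-1}E}$. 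Hence the coarse space of the root stack is again $X_\coarse$, which is quasi-projective by hypothesis.

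The remaining point, which I expect to be the main obstacle, is showing that $X_{r^{-1}E}$ is a global quotient stack. One approach is to use the local description in Example~\ref{ex:root-local-description}: writing $X=[T/\GL_n]$ with $T$ an algebraic space (possible since $X$ is a separated global quotient stack, exactly as in the argument of Example~\ref{ex:git}), the divisor $E$ pulls back to a $\GL_n$-equivariant effective Cartier divisor on $T$, and the root construction commutes with the smooth base change $T\to X$. This exhibits $X_{r^{-1}E}$ as the stack quotient of $T_{r^{-1}E_T}$ by $\GL_n$, where $T_{r^{-1}E_T}$ is the root stack of the algebraic space $T$ along $E_T$. By Proposition~\ref{prop:basic-root}\eqref{enum:coarse} the latter has $T$ as its coarse space, so it has finite inertia; one then needs a $\mu_r$-torsor or $\GL_m$-torsor over $T_{r^{-1}E_T}$ whose total space is an algebraic space. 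Locally over $T$ this is the torsor $\Spec R'\to[\Spec R'/\mu_r]$ of Example~\ref{ex:root-local-description}, and globally one obtains it by realizing $T_{r^{-1}E_T}$ as the quotient by $\mu_r$ of the relative spectrum of $\bigoplus_{i=0}^{r-1}\mathcal{O}_T(\lceil iE/r\rceil)$ (an affine $T$-scheme, hence an algebraic space), with $\mu_r$ acting via the grading. Composing the resulting $\mu_r$-torsor with the $\GL_n$-bundle then yields a $\GL_{rn}$-torsor (or at least a torsor under a linear group, which one embeds into some $\GL_N$) over $X_{r^{-1}E}$ with algebraic-space total space. Alternatively, and perhaps more cleanly, one could cite that stacky blowups (in particular root stacks) preserve the class of global quotient stacks — this is essentially built into the framework of \cite{bergh2014}, \cite{br2015} — but since the excerpt does not make that explicit I would prefer to give the direct torsor construction above.

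Finally, the ``in particular'' clause is immediate: if $X$ is projective over $\kk$ then $X_\coarse$ is proper over $\kk$, and by the coarse-space identification just established the coarse space of $X_{r^{-1}E}$ is the same $X_\coarse$, hence proper; combined with the quasi-projectivity already shown, this gives that $X_{r^{-1}E}$ is projective over $\kk$ in the sense of Definition~\ref{def-projective}. The only genuinely delicate step is the global quotient assertion, where care is needed to produce a single linear group and a single torsor globally rather than merely étale-locally on $X$; everything else is a matter of combining the properness of $\pi$ from Proposition~\ref{prop:basic-root} with the hypotheses on $X$.
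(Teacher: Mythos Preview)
Your outline matches the paper's proof exactly for the first three points: separatedness and finite type follow from properness of $\pi$, and the coarse space of $X_{r^{-1}E}$ coincides with $X_\coarse$ by Proposition~\ref{prop:basic-root}\ref{enum:coarse}. The paper also agrees that the only nontrivial point is the global quotient property.

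The gap is in your construction of the torsor over $T_{r^{-1}E_T}$. The expression $\bigoplus_{i=0}^{r-1}\mathcal{O}_T(\lceil iE/r\rceil)$ is not well-defined (there is no meaning to $\lceil iE/r\rceil$ for a Cartier divisor $E$), and more fundamentally the root stack of an algebraic space is \emph{not} globally a $\mu_r$-quotient of an algebraic space unless the line bundle $\mathcal{O}_T(E)$ admits an $r$-th root. The local description of Example~\ref{ex:root-local-description} depends on a choice of trivialization of $\mathcal{O}(E)$, and these do not glue to a global $\mu_r$-torsor. So as written this step fails.

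The paper's fix is both simpler and more conceptual: it proves the general fact that a fiber product of global quotient stacks over an arbitrary algebraic stack is again a global quotient stack (if $U\to Y$ is a $\GL_n$-torsor and $V\to Z$ is a $\GL_m$-torsor with $U,V$ algebraic spaces, then $U\times_S V$ is an algebraic space and $U\times_S V\to Y\times_S Z$ is a $\GL_n\times\GL_m$-torsor, which one extends to $\GL_{n+m}$). One then applies this directly to the defining fiber product $X_{r^{-1}E}=X\times_{[\AA^1/\GGm]}[\AA^1/\GGm]$, using that both $X$ and $[\AA^1/\GGm]$ are global quotients. In effect this replaces your attempted $\mu_r$-torsor by the honest $\GGm$-torsor $\AA^1\to[\AA^1/\GGm]$, which \emph{does} globalize. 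Your decomposition through $T$ is compatible with this: what you actually get on $T_{r^{-1}E_T}$ is a $\GGm$-torsor (pulled back from the universal one), not a $\mu_r$-torsor, and its total space $T\times_{[\AA^1/\GGm]}\AA^1$ is an algebraic space because it is a fiber product of algebraic spaces over a stack.
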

\begin{proof}
Assume that $X$ is quasi-projective over $\kk$.
Since $X_{r^{-1}E} \ra X$ is proper
(Proposition~\ref{prop:basic-root}.\ref{enum:pi-proper-fp-birat}), 
the root stack
$X_{r^{-1}E}$ is separated and of finite type over $\kk$. 
Since $X$ and $X_{r^{-1}E}$ have isomorphic 
coarse spaces
(Proposition~\ref{prop:basic-root}.\ref{enum:coarse}),
it is enough to verify that $X_{r^{-1}E}$ is a global quotient stack.
But this follows from the general fact, proved below, that the fibre product of
two global quotient stacks over any algebraic stack is a global
quotient stack.
This we apply to the morphism $f_E \colon X \to [\AA^1/\GGm]$
corresponding to $E$ (cf.\ Definition~\ref{def:root-construction}) and the
morphisms $\pi \colon [\AA^1/\GGm] \to [\AA^1/\GGm]$ from the universal root
diagram~\eqref{eq:universal-root-diagram}.

Now we prove the general fact about fiber products of global quotient stacks.
Let $Y \to S$ and $Z \to S$ be morphisms
of algebraic stacks and assume that $Y$ and $Z$ are global quotients.
Then there exist a $\GL_n$-torsor $U \to Y$ and a $\GL_m$-torsor
$V \to Z$ 
such that $U$ and $V$ are algebraic spaces.
Then $U\times_S V$ is an algebraic space
and the canonical morphism $U \times_S V \to Y\times_S Z$ is a
$\GL_n\times \GL_m$-torsor.
Extending this torsor along the obvious embedding
$\GL_n \times \GL_m \to \GL_{n+m}$
yields a $\GL_{n+m}$-torsor over $Y\times_S Z$ which is an
algebraic space. This shows that $Y\times_S Z$ is a global quotient.
\end{proof}

There is also the more general concept of a root stack in a simple normal
crossing (snc) divisor.
In the next section, we will obtain a semiorthogonal decomposition for
such root stacks.
Since this decomposition also exists in the case that the ambient
algebaic stack is not smooth,
will work with a (non-standard) generalized notion of snc divisor. 
This generality will not be needed in the applications we
have in mind, but it comes at no extra cost and 
reveals the true relative nature of the root construction
and the induced semiorthogonal decomposition.

\begin{definition}
\label{def:generalized-snc}
Let $X$ be an algebraic stack.
A \define{generalized snc divisor} on $X$ is a finite family
$E = (E_i)_{i \in I}$ of effective Cartier divisors on $X$
such that for each subset 
$J \subseteq I$ and each element
$i \in J$
the inclusion
$$
\cap_{j \in J}E_j \to \cap_{j \in J\setminus\{i\}}E_j  
$$
is an effective Cartier divisor.
We call the divisors $E_i$ the \define{components} of $E$.
If $X$ is smooth over a field $\kk$,
we call a generalized snc divisor an \define{snc divisor}
if all intersections $\cap_{i \in J}E_j$ are smooth over $\kk$.
\end{definition}

Note that the definition asserts that the non-empty components of
a generalized snc divisor are distinct.

\begin{remark}
Note that we do not require the components to be irreducible,
reduced or non-empty.
This will somewhat simplify the exposition in the proof of
Theorem~\ref{thm:semi-orthogonal-iterated}.
In the smooth case, our definition of an snc divisor coincides with the
standard one,
possibly with the subtle difference that we make the splitting of $E$
into components part of the structure.
\end{remark}

When dealing with the combinatorics of iterated root stacks,
it is convenient to use multi-index notation.
Given a finite set $I$ a \define{multi-index}
(with respect to $I$)
is an element $a = (a_i)$ of $\mathbb{Z}^I$.
Multiplication of multi-indexes is
defined coordinatewise.
We will also consider the partial ordering
on $\mathbb{Z}^I$ defined by $a \leq b$ if and only if
$a_i \leq b_i$ for all $i \in I$. We write $a < b$ if and only if
$a_i < b_i$ for all $i \in I$.
If $a$ is a multi-index and $E$ is a (generalized) snc divisor
indexed by $I$, then $aE$ denotes the Cartier divisor
given by $\sum_{i \in I} a_iE_i$.

\begin{definition}[Iterated root construction]
\label{def:iterated-root}
Let $X$ be an algebraic stack and $E = (E_i)_{i \in I}$ a
generalized snc divisor on $X$.
Fix a multi-index $r > 0$ in $\ZZ^I$.
The \define{$r$-th root stack}, denoted by $X_{r^{-1}E}$,
of $X$ with respect to $E$ and $r$ is defined as the fiber product
of the root stacks $X_{r_i^{-1}E_i}$ over $X$ for $i \in I$.
The \define{transform} of $E$ is defined as the family
$
r^{-1}E = (\widetilde{E}_i)_{i \in I},
$
where $\widetilde{E}_i$ denotes the pull-back of $r_i^{-1}E_i$
along the projection $X_{r^{-1}E} \to X_{r_i^{-1}E_i}$.
\end{definition}

\begin{remark}
If $X$ is smooth and $E$ is an snc divisor,
then the root stack $X_{r^{-1}E}$ only depends on the divisor $rE$.
The corresponding statement for generalized snc divisors is not true.
For instance, consider the coordinate axes $V(x)$ and $V(y)$ in the
affine plane $\AA^2 = \Spec \ZZ[x, y]$.
Then the $(2, 2)$-th root stack of $\AA^2$ in $(V(x), V(y))$ is smooth
whereas the $2$-nd root stack of $\AA^2$ in $(V(xy))$ is not
(cf.~\cite[§2.1]{bc2010}).
\end{remark}

The next proposition is well-known and stated, in a slightly different form,
in \cite[§2.1]{bc2010}.
We include a proof since none is given in \emph{loc.\ cit.}
\begin{proposition}
\label{prop:basic-iterated-root}
Let $X$ be an algebraic stack and $E$ a generalized snc divisor
indexed by $I$.
Given a multi-index $r > 0$ in $\DZ^I$, the $r$-th root construction of $X$ in $E$ has the
following properties:
\begin{enumerate}
\item \label{it:iterated-root}
Given $s > 0$ in $\DZ^I$,
the $s$-th root stack $(X_{r^{-1}E})_{s^{-1}(r^{-1}E)}$ of $X_{r^{-1}E}$ in the
transform $r^{-1}E$ is canonically isomorphic to the $(rs)$-th
root stack $X_{(sr)^{-1}E}$ in $E$,
and this isomorphism identifies $s^{-1}(r^{-1})E$ with $(sr)^{-1}E$.
\item \label{it:gen-snc}
The transform $r^{-1}E$ is a generalized snc divisor on $X_{r^{-1}E}$.
\end{enumerate}
If furthermore $X$ is smooth over a field $\kk$ and $E$ is an snc divisor,
then we have the following:
\begin{enumerate}[resume]
\item \label{it:smooth}
The stack $X_{r^{-1}E}$ is smooth and $r^{-1}E$ is an snc divisor.
\end{enumerate}
\end{proposition}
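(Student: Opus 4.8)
The plan is to reduce everything to the single-divisor case treated in Proposition~\ref{prop:basic-root}, using the fiber-product definition of the iterated root stack together with the local description in Example~\ref{ex:root-local-description}, and then to exploit the associativity-type statement in part~\ref{it:iterated-root} to induct on the cardinality of the index set $I$.

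For part~\ref{it:iterated-root}, I would unwind both sides as fiber products over $X$ of single-divisor root stacks. By Definition~\ref{def:iterated-root}, $X_{r^{-1}E} = \prod_{i\in I} X_{r_i^{-1}E_i}$ (fiber product over $X$), and the $s$-th root of its transform is the further fiber product over $X_{r^{-1}E}$ of the $X_{r^{-1}E}$-stacks $(X_{r^{-1}E})_{s_i^{-1}\widetilde E_i}$. Since $\widetilde E_i$ is pulled back from $X_{r_i^{-1}E_i}$ and the root construction is stable under base change (it is defined as a base change of the universal root diagram), $(X_{r^{-1}E})_{s_i^{-1}\widetilde E_i}$ is the base change to $X_{r^{-1}E}$ of $(X_{r_i^{-1}E_i})_{s_i^{-1}(r_i^{-1}E_i)}$. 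So it suffices to prove the single-divisor statement: for one effective Cartier divisor, $(X_{r^{-1}E})_{s^{-1}(r^{-1}E)} \cong X_{(sr)^{-1}E}$ compatibly with the divisors. This can be checked on the universal root diagram~\eqref{eq:universal-root-diagram}, or, even more concretely, via Example~\ref{ex:root-local-description}: étale-locally $X_{r^{-1}E} = [\Spec R[t]/(t^r - f)\,/\,\rmu_r]$ with $r^{-1}E = V(t)$, and adjoining an $s$-th root of $V(t)$ gives $R[t,u]/(t^r-f,\ u^s-t) \cong R[u]/(u^{rs}-f)$ with the $\rmu_{rs}$-action matching up; the divisor $V(u)$ is $s^{-1}(r^{-1}E)$ and also $(sr)^{-1}E$. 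Reassembling the fiber product over $I$ gives the claim. Part~\ref{it:iterated-root} then also formally implies the usual case-distinction-free statement of Proposition~\ref{prop:basic-root}.\ref{enum:coarse} etc., but I do not need that here.

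For part~\ref{it:gen-snc}, I induct on $|I|$, peeling off one index at a time using part~\ref{it:iterated-root}: write $X_{r^{-1}E}$ as a single-divisor root stack over $X_{r'^{-1}E'}$, where $E'$ drops one component. By the inductive hypothesis $E'$ transforms to a generalized snc divisor, and $\pi\colon X_{r^{-1}E}\to X_{r'^{-1}E'}$ is flat (Proposition~\ref{prop:basic-root}.\ref{enum:pi-proper-fp-birat}), so the pullbacks of the already-handled components remain effective Cartier and the defining intersection conditions of Definition~\ref{def:generalized-snc} are preserved under the flat base change $\pi$. It remains to check the conditions involving the newly adjoined transform $\widetilde E_i$; this is again local, and in the coordinates of Example~\ref{ex:root-local-description} the new component is the principal divisor $V(t)$ cut out by a regular element on each intersection (the regularity being inherited because the old divisors were a generalized snc divisor and $t^r = f$ with $f$ regular modulo the ideal of any of the old intersections). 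So $r^{-1}E$ is a generalized snc divisor.

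For part~\ref{it:smooth}, assume $X$ smooth over $\kk$ and $E$ an snc divisor. Smoothness of $X_{r^{-1}E}$: by part~\ref{it:iterated-root} it is an iterated single-divisor root stack, and for a single smooth divisor $E$ on a smooth $X$ the root stack is smooth — étale-locally $R[t]/(t^r - f)$ with $f$ part of a regular system of parameters is a regular ring, and the $\rmu_r$-quotient stack of a regular scheme is a smooth stack; one iterates, at each stage the relevant divisor being a smooth divisor on the (now smooth) base by the generalized-snc part already proved together with the smoothness of all the intersections (which is again checked in the local model, where $\cap_{j\in J}\widetilde E_j$ is cut out by coordinates $t_j$ in a regular ambient ring). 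That the intersections of $r^{-1}E$ are smooth over $\kk$ — i.e.\ that $r^{-1}E$ is an snc and not merely a generalized snc divisor — follows from the same local computation. The main obstacle, such as it is, is bookkeeping: making the identifications in part~\ref{it:iterated-root} genuinely canonical and 2-functorial so that the induction in parts~\ref{it:gen-snc} and~\ref{it:smooth} glues over an étale cover; once the single-divisor local model is pinned down, everything else is formal descent and flat base change.
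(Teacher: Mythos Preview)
Your approach is correct, but it takes a more roundabout route than the paper for parts~\ref{it:gen-snc} and~\ref{it:smooth}. The paper does not induct on $|I|$; instead it writes down the full $n$-variable local model at once. Namely, locally $X = \Spec R$ with $E = (V(f_1),\dots,V(f_n))$, and then
\[
X_{r^{-1}E}\ \cong\ \bigl[\Spec R'\,/\,\textstyle\prod_i\rmu_{r_i}\bigr],
\qquad R' = R[t_1,\dots,t_n]/(t_i^{r_i}-f_i),
\]
with $r^{-1}E = (V(t_1),\dots,V(t_n))$. Part~\ref{it:gen-snc} is then the verification that $t_1,\dots,t_n$ satisfy the regularity conditions of Definition~\ref{def:generalized-snc} in $R'$, and part~\ref{it:smooth} is the observation that $R'/(t_1,\dots,t_n)\cong R/(f_1,\dots,f_n)$ is regular, whence all $R'/(t_1,\dots,t_k)$ are regular since the $t_i$ form a regular sequence. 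This is strictly shorter than your induction, which in the end has to pass to exactly this local model anyway to handle the new component. For part~\ref{it:iterated-root}, the paper packages your fiber-product reduction more cleanly by mapping to $[\AA^n/\GGm^n]\cong[\AA^1/\GGm]^n$ and using $\pi_r\circ\pi_s=\pi_{rs}$; your argument and the paper's are essentially the same here.

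One small gap in your inductive proof of~\ref{it:gen-snc}: after adjoining the new component $\widetilde E_{i_0}=V(t)$, you check that $t$ is regular modulo each old intersection, but the generalized snc condition also requires that each \emph{old} $\widetilde E_j$ remain effective Cartier on intersections that now include $\widetilde E_{i_0}$. This does not follow from flat pullback alone. It is easy---since $R''/(t)\cong R'/(f_{i_0})$ one reduces to the original snc hypothesis on $E$---but it should be said.
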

\begin{proof}
Assume that $r = (r_1, \ldots, r_n)$.
By identifying $[\AA^n/\GGm^n]$ with $[\AA^1/\GGm]^n$,
we get a canonical morphism
$$
\pi_r = \pi_{r_1}\times \cdots \times \pi_{r_n}\colon [\AA^n/\GGm^n] \to
[\AA^n/\GGm^n]
$$
where each $\pi_{r_i}$ corresponds to the morphism $\pi$ in
the universal $r_i$-th root diagram~\refeq{eq:universal-root-diagram}.
The generalized snc divisor $E = (E_1, \ldots, E_n)$ gives rise
to a morphism
$$
f_E\colon X \xrightarrow{\Delta} X^n \to [\AA^n/\GGm^n],
$$
where the second morphism is the product
$f_{E_1}\times \cdots \times f_{E_n}$ with each $f_{E_i}$ as
in Definition~\ref{def:root-construction}.
It is now easy to see that the structure map $X_{r^{-1}E} \to X$
of the root stack is canonically isomorphic to the pull-back of
$\pi_r$ along $f_E$ (cf.\ \cite[§1.3.b]{fmn2010}).
Using this description, statement \ref{it:iterated-root} follows from the
diagram
$$
\xymatrix{
(X_{r^{-1}E})_{s^{-1}(r^{-1}E))} \ar[r]\ar[d] & [\AA^n/\GGm^n] \ar[d]_{\pi_s} 
\\
X_{r^{-1}E} \ar[r]_{f_{r^{-1}E}}\ar[d] & [\AA^n/\GGm^n] \ar[d]_{\pi_r} \\
X \ar[r]_{f_{E}} & [\AA^n/\GGm^n]  \\
}
$$
with cartesian squares and the fact that $\pi_r\circ \pi_s = \pi_{rs}$.

Next we prove statement \ref{it:gen-snc}.
We may work locally on $X$ and assume that $X = \Spec R$
and $E = (V(f_1), \ldots, V(f_n))$ where $f_1, \ldots, f_n$
is a regular sequence in $R$.
Then $X_{r^{-1}E}$ is given by $[\Spec R'/A]$
with $R' = R[t_1, \ldots, t_n]/(t^{r_1}_1 - f_1, \cdots t^{r_n}_n - f_n)$
and $A = \rmu_{r_1}\times \cdots \times \rmu_{r_n}$ (cf.\
Example~\ref{ex:root-local-description}).
The divisor $r^{-1}E$ is given by $(V(t_1), \ldots, V(t_n))$.
It is now easy to verify that $t_1, \ldots, t_n$ is a
regular sequence in $R'$,
which proves the statement.

Now assume in addition that the ring $R/(f_1, \ldots, f_n)$
is regular.
Then the same holds for the rings $R'/(t_1, \ldots, t_r)$
for $0 \leq r \leq n$ since
$R'/(t_1, \ldots, t_n) \cong R/(f_1, \ldots, f_n)$
and $t_1, \ldots, t_n$ is a regular sequence.
In particular, this implies \ref{it:smooth} since
smoothness over $\kk$ is equivalent to regularity
after base change to an algebraically closed field.
\end{proof}

Due to
Proposition~\ref{prop:basic-iterated-root}.\ref{it:iterated-root}
root constructions in generalized snc divisors are 
sometimes referred to as \define{iterated root constructions}.


\section{Semiorthogonal decompositions for root stacks}
\label{sec:semiorthogonal}
In many aspects, root stacks behave like blowups.
For example, they give
rise to semiorthogonal decompositions.
This was observed by Ishii--Ueda in~\cite[Theorem~1.6]{iu2011}.
In this section, we reprove this theorem in a more general setting as
Theorem~\ref{thm:iishi-ueda}.
We also give an explicit,
combinatorial description of the semiorthogonal decomposition of
the derived 
category of an iterated root stack.

\begin{lemma}
[{cf.\ \cite[Theorem~4.14.(1)]{hr2014}}]
\label{lem:right-adjoint}
Let $\iota \colon E \to X$ be an effective Cartier divisor
on an algebraic stack $X$.
Then the functor 
$\iota_\ast =\R \iota_\ast \colon \Dqc(E) \to \Dqc(X)$
admits a right adjoint 
$\iota^\times$, 
and both $\iota_\ast$ and
$\iota^\times$ preserve perfect complexes.
\end{lemma}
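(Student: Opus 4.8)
The plan is to produce the right adjoint $\iota^\times$ by the standard formula for the right adjoint to pushforward along a closed immersion of finite Tor-dimension. Concretely, for an effective Cartier divisor $\iota\colon E \to X$ one has $\iota^\times(-) = \L\iota^\ast(-) \otimes \mathcal{O}_E(E)[-1]$, or equivalently $\iota^\times(-) = \L\iota^\ast\bigl(\R\shHom_{\mathcal{O}_X}(\iota_\ast\mathcal{O}_E, -)\bigr)$. First I would observe that the Koszul resolution $0 \to \mathcal{O}_X(-E) \to \mathcal{O}_X \to \iota_\ast\mathcal{O}_E \to 0$ exhibits $\iota_\ast\mathcal{O}_E$ as a perfect complex on $X$, so the functor $\R\shHom_{\mathcal{O}_X}(\iota_\ast\mathcal{O}_E, -)\colon \Dqc(X) \to \Dqc(X)$ is well-defined, preserves $\Dqc$, and preserves perfect complexes. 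Composing with $\L\iota^\ast$ (which exists and preserves $\Dqc$ and perfect complexes — this is the content of the general setup recalled in Section~\ref{sec:preliminaries}, together with the fact that a closed immersion is concentrated) gives a candidate functor $\iota^\times\colon \Dqc(E) \to \Dqc(X)$ that manifestly preserves perfect complexes.

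Next I would check the adjunction $\Hom_{\Dqc(X)}(\iota_\ast F, G) \cong \Hom_{\Dqc(E)}(F, \iota^\times G)$. The cleanest route is via the projection formula and the $(\L\iota^\ast, \R\iota_\ast)$-adjunction: using $\R\iota_\ast\R\shHom_{\mathcal{O}_E}(F, \L\iota^\ast H) \cong \R\shHom_{\mathcal{O}_X}(\R\iota_\ast F, H)$ for $F \in \Dqc(E)$ and $H \in \Dqc(X)$ — which follows from Grothendieck duality for the finite, perfect morphism $\iota$, or can be checked directly from the Koszul resolution — one gets, taking $H = \iota_\ast\mathcal{O}_E \otimes$ (shift/twist), the desired isomorphism after applying $\R\Gamma(X,-)$. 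Alternatively, since the existence of a right adjoint is a local question in the following sense: $\iota^\times$ exists iff for each $G$ the functor $F \mapsto \Hom(\iota_\ast F, G)$ is representable, one can reduce to the affine local picture of Example~\ref{ex:root-local-description} / the Koszul description and verify representability there. I would most likely cite \cite[Theorem~4.14.(1)]{hr2014} for the existence of $\iota^\times$ and for the fact that $\iota^\times$ is computed by the formula above, and then only need to argue the preservation statements.

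For the preservation of perfect complexes: that $\iota_\ast$ preserves perfectness is immediate from the Koszul resolution (pushforward of a perfect complex along a morphism that is itself perfect and representable/finite stays perfect — or simply: $\iota_\ast F$ is locally a finite complex of summands of free modules because $\mathcal{O}_E$ is perfect over $\mathcal{O}_X$ and $\iota$ is affine), and that $\iota^\times$ preserves perfectness is clear from the formula $\iota^\times(-) \cong \L\iota^\ast(-)(E)[-1]$ since $\L\iota^\ast$ preserves perfect complexes and twisting by a line bundle and shifting do too.

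The main obstacle I expect is purely bookkeeping rather than conceptual: establishing the duality/projection-formula isomorphism $\R\iota_\ast\R\shHom_{\mathcal{O}_E}(F, \L\iota^\ast H) \cong \R\shHom_{\mathcal{O}_X}(\iota_\ast F, H)$ in the generality of algebraic stacks (lisse--étale topos), where one must be slightly careful that $\L\iota^\ast$ and $\R\iota_\ast$ behave as expected and that the internal Hom commutes with the relevant functors on $\Dqc$. Since $\iota$ is a closed immersion with $\iota_\ast\mathcal{O}_E$ perfect, all of this is controlled by the two-term Koszul complex, so the verification reduces to a short computation with a length-one complex; but the cleanest exposition is to invoke \cite[Theorem~4.14.(1)]{hr2014} directly, which already packages exactly this statement in the stacky setting, and simply add the observation that the formula there makes the preservation of perfect complexes transparent.
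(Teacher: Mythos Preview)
Your proposal is correct and arrives at the same endpoint as the paper (the formula $\iota^\times G \cong \L\iota^*G \otimes \mathcal{O}_E(E)[-1]$, and the preservation of perfects via the two-term Koszul resolution), but the mechanism for producing the adjunction is genuinely different.

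The paper does not invoke Grothendieck duality or cite \cite{hr2014} for the existence of $\iota^\times$; instead it regards the Koszul complex $\mathcal{R} = (\mathcal{O}_X(-E) \to \mathcal{O}_X)$ as a sheaf of commutative dg $\mathcal{O}_X$-algebras, uses the quasi-isomorphism $\mathcal{R} \to \iota_\ast\mathcal{O}_E$ to factor $\iota_\ast$ through equivalences $\Dqc(E) \simeq \Dqc(X,\iota_\ast\mathcal{O}_E) \simeq \Dqc(X,\mathcal{R})$ followed by restriction of scalars $\alpha_\ast\colon \Dqc(X,\mathcal{R}) \to \Dqc(X,\mathcal{O}_X)$, and then takes the right adjoint to be coinduction $\alpha^\times = \sheafHom_{\mathcal{O}_X}(\mathcal{R},-)$. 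Since $\mathcal{R}$ is strictly perfect this is automatically derived and lands in $\Dqc$; the explicit computation of $\alpha^\times(\mathcal{O}_X)$ then yields the formula. This route is entirely self-contained and sidesteps the bookkeeping you flag as the main obstacle: no duality isomorphism in the lisse--\'etale topos is needed, only the elementary restriction/coinduction adjunction for a change of dg rings. Your route is quicker if one is willing to import the duality package, but the paper's buys a clean from-scratch argument that also sets up the dg-module framework reused in the next lemma (the triangle for $\L\iota^*\iota_*$).

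Two small slips to fix: your displayed isomorphism $\R\iota_\ast\R\sheafHom_{\mathcal{O}_E}(F, \L\iota^\ast H) \cong \R\sheafHom_{\mathcal{O}_X}(\iota_\ast F, H)$ is off by exactly the twist-and-shift (the left side should carry $\iota^! H = \L\iota^\ast H(E)[-1]$, not $\L\iota^\ast H$); and the domain/codomain of $\iota^\times$ are reversed in one sentence.
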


Before we prove the lemma, we introduce some auxiliary notation.
Let $Z$ be an algebraic stack and $\mathcal{R}$ a quasi-coherent
sheaf of commutative $\mathcal{O}_Z$-algebras.
We call a sheaf of $\mathcal{R}$-modules quasi-coherent if it is
quasi-coherent as an $\mathcal{O}_Z$-module.
Let $\D(Z, \mathcal{R})$ denote the derived category of sheaves of
$\mathcal{R}$-modules in the topos $Z_\lisset$
and let $\Dqc(Z, \mathcal{R})$ be the full subcategory of objects
with 
quasi-coherent cohomology.
More generally, the definitions of $\D(Z, \mathcal{R})$ and
$\Dqc(Z, \mathcal{R})$ generalize in the obvious way to the case
that $\mathcal{R}$ is a quasi-coherent sheaf of commutative
dg $\mathcal{O}_Z$-algebras.

\begin{proof}
Consider the sheaf of commutative dg $\mathcal{O}_X$-algebras $\mathcal{R} = (\mathcal{O}_X(-E) \to
\mathcal{O}_X)$ where $\mathcal{O}_X$ sits in degree zero.
It comes with a quasi-isomorphism
$\mathcal{R}
\ra \iota_\ast\mathcal{O}_E$ of sheaves
of dg $\mathcal{O}_X$-algebras.  
The pushforward
\begin{equation*}
\iota_\ast \colon \Dqc(E) = \Dqc(E, \mathcal{O}_E) \to \Dqc(X) =
\Dqc(X, \mathcal{O}_X)  
\end{equation*}
factors as
\begin{equation}
  \label{eq:8}
  \Dqc(E, \mathcal{O}_E) 
  \xrightarrow{\sim} \Dqc(X, \iota_\ast\mathcal{O}_E) 
  \xrightarrow{\sim} \Dqc(X, \mathcal{R})
  \xrightarrow{\alpha_\ast} \Dqc(X, \mathcal{O}_X).
\end{equation}
Indeed, the first functor is an equivalence since $\iota$ is
affine  \cite[Corollary~2.7]{hr2014}.
The second equivalence is induced by restriction along the quasi-isomorphism
$\mathcal{R} \to \iota_\ast\mathcal{O}_E$ \cite[Proposition~1.5.6]{riche2010}.
The third functor $\alpha_\ast$ is induced by restriction along
the structure morphism $\mathcal{O}_X \to \mathcal{R}$.
This reduces the problem of finding a right adjoint to $\iota_\ast$
to finding a right adjoint to $\alpha_\ast$.

On the level of complexes,
the functor $\sheafHom_{\mathcal{O}_X}(\mathcal{R}, -)$
is easily seen to be right adjoint to restriction along
$\mathcal{O}_X \to \mathcal{R}$.
Since $\mathcal{R}$ is strictly perfect as a complex of $\mathcal{O}_X$-modules,
the functor $\sheafHom_{\mathcal{O}_X}(\mathcal{R}, -)$ takes acyclic complexes
to acyclic complexes and
descends to a
right adjoint $\alpha^\times \colon \Dqc(\mathcal{O}_X, X) \to
\Dqc(\mathcal{R}, X)$ of $\alpha_\ast$.
This proves the existence of $i^\times$.

Next we prove that $\iota_\ast$ preserves perfect complexes.
Let $\mathcal{F} \in \Dpf(E)$. 
The question whether $\iota_\ast\mathcal{F}$ is perfect
is local on $X$.
Since vector bundles on $E$ trivialize locally on $X$,
we may assume that $\mathcal{F}$ is a bounded complex of finite free modules.
But now the fact that $\iota_\ast\mathcal{O}_E$ is perfect implies
that $\iota_\ast\mathcal{F}$ is perfect.

Finally, let us prove that $\iota^\times$ preserves perfect complexes.
Since the question is local on $X$,
it is enough to verify that 
$\iota^\times(\mathcal{O}_X)$ is perfect.
Observe first that 
$\alpha^\times(\mathcal{O}_X)=\sheafHom_{\mathcal{O}_X}(\mathcal{R},
\mathcal{O}_X) 
\cong
\mathcal{R}\otimes_{\mathcal{O}_X} \mathcal{O}_X(E)[-1]$
in $\Dqc(X, \mathcal{R})$.
Under the equivalences of \eqref{eq:8}, this object corresponds to
the object 
$\iota_*\mathcal{O}_E \otimes_{\mathcal{O}_X}
\mathcal{O}_X(E)[-1]
\cong \iota_*( \mathcal{O}_E \otimes_{\mathcal{O}_E}
\iota^*(\mathcal{O}_X(E))[-1] \cong 
\iota_*(\iota^*(\mathcal{O}_X(E))[-1]$ of 
$\Dqc(X, \iota_*\mathcal{O}_E)$
and to the object 
$\iota^*(\mathcal{O}_X(E))[-1]$ of $\Dqc(E)$.
This latter object is obviously perfect and isomorphic to 
$\iota^\times(\mathcal{O}_X)$.
\end{proof}

\begin{lemma}
  \label{l:pull-push-triangle}
  In the setting of
  Lemma~\ref{lem:right-adjoint},
  given 
  any object $\mathcal{F}$ of $\Dqc(E)$, the adjunction counit
  $\L\iota^* \iota_* \mathcal{F}
  \ra 
  \mathcal{F}$
  fits into a triangle
  \begin{equation*}
    \mathcal{F} \otimes_{\mathcal{O}_E} \iota^*
    \mathcal{O}_X(-E)[1]
    \ra
    \L\iota^* \iota_* \mathcal{F}
    \ra 
    \mathcal{F}
    \ra
    \mathcal{F} \otimes_{\mathcal{O}_E} \iota^*
    \mathcal{O}_X(-E)[2]
  \end{equation*}
\end{lemma}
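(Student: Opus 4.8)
The plan is to work locally on $X$ using the explicit model for $\mathcal{R}$ from the proof of Lemma~\ref{lem:right-adjoint} and then to globalize. Recall $\mathcal{R} = (\mathcal{O}_X(-E) \xrightarrow{s} \mathcal{O}_X)$ with $\mathcal{O}_X$ in degree zero and $s$ the canonical section cutting out $E$, and that $\mathcal{R} \to \iota_\ast \mathcal{O}_E$ is a quasi-isomorphism. The key computation is the derived pullback $\L\iota^\ast \iota_\ast \mathcal{F}$. Since $\iota$ is affine, $\iota_\ast \mathcal{F}$ corresponds, under the equivalences in~\eqref{eq:8}, to $\mathcal{F}$ regarded as a sheaf of $\iota_\ast\mathcal{O}_E$-modules, hence (by restriction along $\mathcal{R} \to \iota_\ast\mathcal{O}_E$) to $\mathcal{F}$ as a complex of $\mathcal{R}$-modules, and finally to its restriction along $\mathcal{O}_X \to \mathcal{R}$. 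Computing $\L\iota^\ast$ amounts to computing $\iota_\ast \mathcal{O}_E \otimes^\L_{\mathcal{O}_X} (-)$; replacing $\iota_\ast\mathcal{O}_E$ by the strictly perfect resolution $\mathcal{R}$, which is a two-term complex of line bundles, gives a two-term filtration of $\L\iota^\ast \iota_\ast \mathcal{F}$ whose associated graded pieces are $\mathcal{F}$ (in homological degree $0$) and $\mathcal{F} \otimes_{\mathcal{O}_E} \iota^\ast\mathcal{O}_X(-E)[1]$ (in homological degree $1$), the latter coming from the $\mathcal{O}_X(-E)$-term of $\mathcal{R}$.

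First I would make this precise: tensoring the short exact sequence of complexes $0 \to \mathcal{O}_X(-E)[1] \to \mathcal{R} \to \mathcal{O}_X \to 0$ (viewing $\mathcal{O}_X(-E)$ as a complex concentrated in degree $1$, i.e. the stupid filtration of the two-term complex $\mathcal{R}$) with $\mathcal{F}$ over $\mathcal{O}_X$ and using that each term is a line bundle, hence flat, yields a distinguished triangle
\[
\mathcal{F} \otimes_{\mathcal{O}_X} \mathcal{O}_X(-E)[1] \ra \L\iota^\ast\iota_\ast\mathcal{F} \ra \mathcal{F} \ra \mathcal{F} \otimes_{\mathcal{O}_X} \mathcal{O}_X(-E)[2],
\]
where I identify $\mathcal{F} \otimes_{\mathcal{O}_X} \mathcal{O}_X(-E)$ with $\mathcal{F} \otimes_{\mathcal{O}_E} \iota^\ast \mathcal{O}_X(-E)$ via the projection formula (both computed as $\mathcal{O}_E$-modules, using that $\mathcal{O}_X(-E)$ is invertible). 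The third arrow $\L\iota^\ast\iota_\ast\mathcal{F} \to \mathcal{F}$ is the morphism of $\mathcal{R}$-modules (equivalently $\mathcal{O}_E$-modules) induced by $\mathcal{R} \to \mathcal{O}_X$, and one checks this agrees with the adjunction counit by the standard description of the counit for the adjunction $(\L\iota^\ast, \iota_\ast)$ in terms of the evaluation $\iota_\ast\mathcal{O}_E \otimes^\L_{\mathcal{O}_X} \iota_\ast\mathcal{F} \to \iota_\ast\mathcal{F}$; this is most transparent on the chain level since $\mathcal{R}$ is an $\mathcal{O}_X$-algebra augmented to $\mathcal{O}_X$ and $\mathcal{F}$ is an $\mathcal{R}$-module, so the multiplication $\mathcal{R} \otimes_{\mathcal{O}_X} \mathcal{F} \to \mathcal{F}$ is a chain map recovering both the derived tensor and the counit.

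The main obstacle I anticipate is the identification of the connecting map of the triangle with the honest adjunction counit rather than merely producing \emph{some} triangle with the stated outer terms. Producing the triangle is formal from the resolution; matching it to the counit requires care about how the equivalences in~\eqref{eq:8} interact with the counit of $(\L\iota^\ast, \iota_\ast)$, and about commuting the various restriction and extension-of-scalars functors with $\otimes^\L$. I would handle this by noting that all the functors in~\eqref{eq:8} are (derived) restriction/coextension along quasi-isomorphisms or affine morphisms, under which the relevant projection formula and the counit are compatible in the obvious way — these compatibilities are standard (cf.\ \cite{riche2010}) — and that the chain-level multiplication map $\mathcal{R} \otimes_{\mathcal{O}_X} \mathcal{F} \to \mathcal{F}$ simultaneously computes $\L\iota^\ast\iota_\ast\mathcal{F} \to \mathcal{F}$ and exhibits the cone as the degree-shifted line-bundle term, so the two constructions literally coincide before passing to the derived category. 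Since the whole statement is local on $X$ and all objects and morphisms in sight are canonical, the local identifications glue, giving the triangle globally.
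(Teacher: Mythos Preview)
Your proposal is correct and follows essentially the same approach as the paper: both arguments use the dg algebra resolution $\mathcal{R}=(\mathcal{O}_X(-E)\to\mathcal{O}_X)$ of $\iota_\ast\mathcal{O}_E$, identify $\L\iota^\ast\iota_\ast$ with extension of scalars $(-\otimes_{\mathcal{O}_X}\mathcal{R})$ along $\mathcal{O}_X\to\mathcal{R}$ via the equivalences in~\eqref{eq:8}, and obtain the triangle from the multiplication map $\mathcal{R}\otimes_{\mathcal{O}_X}\mathcal{M}\to\mathcal{M}$ together with the identification of its kernel with $\mathcal{M}\otimes_{\mathcal{O}_X}\mathcal{O}_X(-E)[1]$. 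The only cosmetic differences are that the paper starts from the multiplication map and computes its kernel (noting that the kernel is an $\mathcal{R}$-module isomorphism precisely because $\mathcal{M}$ comes from an $\iota_\ast\mathcal{O}_E$-module), whereas you start from the stupid filtration of $\mathcal{R}$ and then match the resulting map with the counit; also, your ``work locally and glue'' framing is unnecessary, since $\mathcal{R}$ is globally defined and the entire argument is already global.
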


\begin{proof}
  Recall the factorization \eqref{eq:8} of $\iota_*$
  from the proof of Lemma~\ref{lem:right-adjoint}.
  The functor $\alpha_*$ occuring there is restriction of scalars
  along $\mathcal{O}_X \ra \mathcal{R}$. 
  Extension of scalars
  $(- \otimes_{\mathcal{O}_X} \mathcal{R})$ preserves acyclic
  complexes 
  and therefore defines a 
  left adjoint $(- \otimes_{\mathcal{O}_X} \mathcal{R})
  \colon \Dqc(X, 
  \mathcal{O}_X) \ra \Dqc(X, \mathcal{R})$ to $\alpha_*$.
  Modulo the two equivalences in \eqref{eq:8} this
  functor is isomorphic to $\L \iota^*$, and the adjunction
  counit
  $\L\iota^* \iota_*  \ra \id$
  corresponds to the adjunction counit
  $(- \otimes_{\mathcal{O}_X} \mathcal{R}) \ra \id$.

  Let $\mathcal{M}$ be a dg $\mathcal{R}$-module. 
  The adjunction counit $\mathcal{M} \otimes_{\mathcal{O}_X}
  \mathcal{R} \ra \mathcal{M}$ is given by multiplication. We
  denote its kernel by $\mathcal{K}$ and obtain a short exact
  sequence 
  \begin{equation*}
    \mathcal{K}
    \hra
    \mathcal{M} \otimes_{\mathcal{O}_X} \mathcal{R} 
    \sra 
    \mathcal{M} 
  \end{equation*}
  of dg $\mathcal{R}$-modules.
  As complexes of dg $\mathcal{O}_X$-modules, we have an
  obvious isomorphism
  \begin{equation*}
    \mathcal{K} \cong
    \mathcal{M} \otimes_{\mathcal{O}_X}
    \mathcal{O}_X(-E)[1].
  \end{equation*}
  Assume that $\mathcal{M}$ is obtained from a complex
  of dg $\iota_*\mathcal{O}_E$-modules by restriction along
  $\mathcal{R} \ra \iota_*\mathcal{O}_E$.
  Then this isomorphism is even an isomorphism of dg
  $\mathcal{R}$-modules. Since   
  $\mathcal{M} \otimes_{\mathcal{O}_X} \mathcal{O}_X(-E)$ 
  and
  $\mathcal{M} 
  \otimes_{\mathcal{R}} 
  (\mathcal{R}
  \otimes_{\mathcal{O}_X} \mathcal{O}_X(-E))$ are isomorphic as
  dg $\mathcal{R}$-modules, we obtain a triangle
  \begin{equation*}
    \mathcal{M} 
    \otimes_{\mathcal{R}} 
    (\mathcal{R}
    \otimes_{\mathcal{O}_X} \mathcal{O}_X(-E))[1]
    \ra
    \mathcal{M} \otimes_{\mathcal{O}_X} \mathcal{R} 
    \ra 
    \mathcal{M} 
    \ra
    \mathcal{M} 
    \otimes_{\mathcal{R}} 
    (\mathcal{R}
    \otimes_{\mathcal{O}_X} \mathcal{O}_X(-E))[2]
  \end{equation*}
  in $\D(X, \mathcal{R})$. 
  Since restriction of scalars
  $\Dqc(X, \iota_*\mathcal{O}_E) \ra \Dqc(X, \mathcal{R})$ is an
  equivalence, we obtain such a triangle in $\Dqc(X,
  \mathcal{R})$ for any object $\mathcal{M}$ of
  $\Dqc(X, \mathcal{R})$. The claim follows.
\end{proof}

\begin{lemma}
\label{lem:left-adjoint-la}
Let $f\colon X \to Y$ be a concentrated morphism of algebraic
stacks such that $\R f_\ast \colon \Dqc(X) \ra \Dqc(Y)$ preserves
perfect complexes. Then $\L f^\ast\colon \Dpf(Y) \to \Dpf(X)$ has a left adjoint $f_\times$ given by 
$$
f_\times\colon \Dpf(X) \ra \Dpf(Y), \quad 
\mathcal{F}
\mapsto
(\R f_\ast (\mathcal{F}^\vee))^\vee
$$
where $(-)^\vee$ denotes the dual
$\R \sheafHom_{\mathcal{O}_X}(-, \mathcal{O}_X)$ on 
$\Dpf(X)$, and similarly for 
$\Dpf(Y)$.
\end{lemma}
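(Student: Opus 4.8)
The plan is to verify the adjunction
\[
\Hom_{\Dpf(X)}(f_\times \mathcal{F}, \mathcal{G}) \cong \Hom_{\Dpf(Y)}(\mathcal{F}, \L f^\ast \mathcal{G})
\]
directly, by dualizing and using the existing Grothendieck-type adjunction between $\R f_\ast$ and $\L f^\ast$ on $\Dpf$. First I would record that dualization $(-)^\vee = \R\sheafHom_{\mathcal{O}_X}(-, \mathcal{O}_X)$ restricts to a contravariant autoequivalence of $\Dpf(X)$ (and of $\Dpf(Y)$) that is its own quasi-inverse: this is a standard local computation, since a perfect complex is locally a bounded complex of finite free modules, and biduality $\mathcal{F} \xrightarrow{\sim} \mathcal{F}^{\vee\vee}$ holds there. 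I would also note that $\L f^\ast$ commutes with dualization, i.e.\ $\L f^\ast(\mathcal{G}^\vee) \cong (\L f^\ast \mathcal{G})^\vee$ for $\mathcal{G} \in \Dpf(Y)$; again this reduces to the case of finite free complexes, where it is immediate, together with the fact that $\L f^\ast$ is monoidal and preserves perfectness.

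Next I would assemble the chain of natural isomorphisms. Starting from $\Hom_{\Dpf(Y)}(\mathcal{F}, \L f^\ast \mathcal{G})$, apply the autoequivalence $(-)^\vee$ on $Y$ to rewrite this as $\Hom_{\Dpf(Y)}((\L f^\ast \mathcal{G})^\vee, \mathcal{F}^\vee)$, then use $\L f^\ast$-dualization compatibility to get $\Hom_{\Dpf(Y)}(\L f^\ast(\mathcal{G}^\vee), \mathcal{F}^\vee)$. Now invoke the adjunction $(\L f^\ast, \R f_\ast)$, which on $\Dpf$ is available because $\R f_\ast$ is assumed to preserve perfect complexes (so both sides land in $\Dpf$); this turns the Hom-set into $\Hom_{\Dpf(X)}(\mathcal{G}^\vee, \R f_\ast(\mathcal{F}^\vee))$. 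Finally apply $(-)^\vee$ on $X$ once more to obtain $\Hom_{\Dpf(X)}((\R f_\ast(\mathcal{F}^\vee))^\vee, \mathcal{G}^{\vee\vee}) \cong \Hom_{\Dpf(X)}(f_\times \mathcal{F}, \mathcal{G})$, using biduality $\mathcal{G}^{\vee\vee} \cong \mathcal{G}$. All isomorphisms are natural in both variables, so $f_\times$ as defined is genuinely left adjoint to $\L f^\ast$ on perfect complexes, and it visibly lands in $\Dpf(Y)$ since $\R f_\ast(\mathcal{F}^\vee) \in \Dpf(Y)$ by hypothesis and $(-)^\vee$ preserves $\Dpf(Y)$.

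The main obstacle I anticipate is not in the formal manipulation above but in pinning down the compatibility of $\L f^\ast$ with duals and the existence of the underlying $(\L f^\ast, \R f_\ast)$ adjunction restricted to perfect complexes in the stacky, lisse-\'etale setting. One has to be a little careful because, as noted in Remark~\ref{rem:left-derived-liset}, $\L f^\ast$ on all of $\D(-)$ is subtle for algebraic stacks; but $f$ is concentrated, so $\L f^\ast \colon \Dqc(Y) \to \Dqc(X)$ exists and is left adjoint to $\R f_\ast \colon \Dqc(X) \to \Dqc(Y)$, and both functors restrict to $\Dpf$ by hypothesis on $\R f_\ast$ together with the fact that $\L f^\ast$ always preserves perfectness. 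The dual-compatibility $\L f^\ast \R\sheafHom(-, \mathcal{O}_Y) \cong \R\sheafHom(\L f^\ast(-), \mathcal{O}_X)$ on $\Dpf$ then follows by reduction to finite free complexes via a local argument, since perfectness and the formation of both sides are local on $Y$ and $X$. Once these two inputs are in hand, the verification of the adjunction is the purely formal computation sketched above.
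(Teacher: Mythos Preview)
Your argument is exactly the paper's: dualization is an involutive anti-equivalence on $\Dpf$ that commutes with $\L f^\ast$, and you sandwich the $(\L f^\ast,\R f_\ast)$ adjunction between two applications of $(-)^\vee$. The only slip is that you have systematically swapped the labels $X$ and $Y$ in your chain of Hom-sets (e.g.\ $\Hom_{\Dpf(X)}(f_\times\mathcal{F},\mathcal{G})$ should read $\Hom_{\Dpf(Y)}$, since $f_\times\mathcal{F}\in\Dpf(Y)$), but this is purely cosmetic and the proof is otherwise correct and identical to the paper's.
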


\begin{proof}
This statement is a formal consequence of the dual $(-)^\vee$ being
an involutive anti-equivalence 
which respects derived pullbacks.
Explicitly, for $\mathcal{F} \in \Dpf(X)$ and $\mathcal{G} \in \Dpf(Y)$
we have
\begin{align*}
\Hom_{\D(X)}(\mathcal{F}, \L f^*\mathcal{G}) 
& \cong \Hom_{\D(X)}((\L f^*\mathcal{G})^\vee, \mathcal{F}^\vee)\\
& \cong \Hom_{\D(X)}(\L f^* (\mathcal{G}^\vee), \mathcal{F}^\vee)\\ 
& \cong \Hom_{\D(Y)}(\mathcal{G}^\vee, \R f_*(\mathcal{F}^\vee))\\
& \cong \Hom_{\D(Y)}((\R f_*(\mathcal{F}^\vee))^\vee, (\mathcal{G}^\vee)^\vee)\\
& \cong \Hom_{\D(Y)}(f_\times \mathcal{F}, \mathcal{G}).
\end{align*}
\end{proof}

\begin{lemma}
\label{lem:left-adjoint-ff}
Let $f\colon X \to Y$ be a concentrated morphism of algebraic
stacks.
Then $\L f^\ast\colon \Dqc(Y) \to \Dqc(X)$ is full and faithful
if and only if 
the natural morphism $\mathcal{O}_Y
\to \R f_\ast \mathcal{O}_X$ is an isomorphism.
\end{lemma}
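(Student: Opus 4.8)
The plan is to use the projection formula together with the fact that $\L f^\ast$ is left adjoint to $\R f_\ast$, reducing the full-faithfulness condition to a statement about the adjunction unit evaluated on $\mathcal{O}_Y$. First I would recall that $\L f^\ast$ is full and faithful if and only if for all $\mathcal{G}, \mathcal{G}' \in \Dqc(Y)$ the natural map
\begin{equation*}
  \Hom_{\Dqc(Y)}(\mathcal{G}, \mathcal{G}')
  \ra
  \Hom_{\Dqc(X)}(\L f^\ast \mathcal{G}, \L f^\ast \mathcal{G}')
\end{equation*}
is a bijection. By adjunction the right-hand side is $\Hom_{\Dqc(Y)}(\mathcal{G}, \R f_\ast \L f^\ast \mathcal{G}')$, and the map in question is postcomposition with the adjunction unit $\eta_{\mathcal{G}'} \colon \mathcal{G}' \ra \R f_\ast \L f^\ast \mathcal{G}'$. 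Thus $\L f^\ast$ is full and faithful if and only if $\eta_{\mathcal{G}'}$ is an isomorphism for every $\mathcal{G}' \in \Dqc(Y)$; this is the standard ``a left adjoint is fully faithful iff the unit is invertible'' criterion, which I would invoke directly.

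Next I would reduce from ``$\eta_{\mathcal{G}}$ is an isomorphism for all $\mathcal{G}$'' to ``$\eta_{\mathcal{O}_Y}$ is an isomorphism'', i.e.\ to the assertion that $\mathcal{O}_Y \ra \R f_\ast \mathcal{O}_X$ is an isomorphism (note $\L f^\ast \mathcal{O}_Y = \mathcal{O}_X$). One direction is trivial: if $\L f^\ast$ is fully faithful, then in particular $\eta_{\mathcal{O}_Y}$ is an isomorphism. For the converse, the key input is the projection formula: for $\mathcal{G} \in \Dqc(Y)$ there is a natural isomorphism $\R f_\ast \L f^\ast \mathcal{G} \cong \mathcal{G} \otimes^{\mathbf{L}}_{\mathcal{O}_Y} \R f_\ast \mathcal{O}_X$, compatible with the unit maps in the sense that $\eta_{\mathcal{G}}$ is identified with $\mathcal{G} \otimes^{\mathbf{L}} \eta_{\mathcal{O}_Y} \colon \mathcal{G} \cong \mathcal{G} \otimes^{\mathbf{L}} \mathcal{O}_Y \ra \mathcal{G} \otimes^{\mathbf{L}} \R f_\ast \mathcal{O}_X$. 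Hence if $\eta_{\mathcal{O}_Y}$ is an isomorphism, so is $\eta_{\mathcal{G}}$ for every $\mathcal{G}$, and $\L f^\ast$ is fully faithful.

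The main obstacle is justifying the projection formula in the stacky, unbounded setting used here — i.e.\ for a concentrated morphism $f$ of algebraic stacks and arbitrary $\mathcal{G} \in \Dqc(Y)$ — together with its compatibility with the adjunction unit. I would cite the projection formula for concentrated morphisms of algebraic stacks from Hall--Rydh (the reference already used in the excerpt, \cite{hr2014}, e.g.\ their Corollary~4.12 or the surrounding results on $\Dqc$), which holds precisely because $f$ is concentrated. The compatibility of the projection-formula isomorphism with the unit is a routine diagram chase once one writes the isomorphism as the composite built from $\L f^\ast(\mathcal{G} \otimes^{\mathbf{L}} \mathcal{O}_Y) \cong \L f^\ast \mathcal{G} \otimes^{\mathbf{L}} \mathcal{O}_X$ and the projection morphism, and I would only indicate this rather than spell it out. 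No smoothness, properness or tameness hypotheses enter; the statement and proof are purely formal given the existence of $\L f^\ast$, $\R f_\ast$ and the projection formula on $\Dqc$.
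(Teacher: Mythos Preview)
Your proposal is correct and follows essentially the same approach as the paper: reduce full faithfulness of $\L f^\ast$ to invertibility of the adjunction unit, then use the projection formula for concentrated morphisms \cite[Corollary~4.12]{hr2014} to identify $\eta_{\mathcal{G}}$ with $\mathcal{G}\otimes^{\mathbf{L}}\eta_{\mathcal{O}_Y}$. The paper's proof is slightly terser (it does not spell out the compatibility of the projection-formula isomorphism with the unit), but the argument is the same.
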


\begin{proof}
A left adjoint functor is full and faithful if and only if the
adjunction unit is an isomorphism. 
In particular, one implication is trivial.
For the other implication, assume that $\mathcal{O}_Y
\to \R f_\ast \mathcal{O}_X$ is an isomorphism.
Then the
projection formula
\cite[Corollary~4.12]{hr2014} 
gives $$
\mathcal{G} 
\sira \R f_\ast \mathcal{O}_X \otimes \mathcal{G} 
\sira \R f_\ast (\mathcal{O}_X \otimes \L f^\ast \mathcal{G})  
\sira \R f_\ast \L f^\ast \mathcal{G},
\qquad
\mathcal{G} \in \Dqc(Y).
$$
This shows that the adjunction unit is an isomorphism.
\end{proof}

\begin{lemma}
\label{l:tame-pushforward}
Let $X$ be a tame algebraic stack with finite inertia,
and let $\pi\colon X \to X_\coarse$ denote the canonical
morphism to its coarse space.
Then the natural morphism
$\mathcal{O}_{X_\coarse} \to \R\pi_\ast \mathcal{O}_X$
is an isomorphism.
Moreover, if $\pi$ is flat and of finite presentation,
then
$\R\pi_\ast \colon \Dqc(X) \to \Dqc(X_\coarse)$
preserves perfect complexes.
\end{lemma}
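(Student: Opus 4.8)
The plan is to treat the two assertions separately. The first is essentially formal. Since $X$ is tame with finite inertia, the functor $\pi_\ast\colon\Qcoh(X)\to\Qcoh(X_\coarse)$ is exact, so $\R\pi_\ast\mathcal{O}_X$ is concentrated in degree zero and the canonical map identifies it with $\pi_\ast\mathcal{O}_X$ in $\Dqc(X_\coarse)$. It then remains to recall that $\mathcal{O}_{X_\coarse}\to\pi_\ast\mathcal{O}_X$ is an isomorphism, which is part of the construction of the coarse space \cite{km1997,rydh2013}; this can be checked étale-locally on $X_\coarse$, where $X$ has the shape $[\Spec B/G]$ with coarse space $\Spec B^G$, so that $\pi_\ast\mathcal{O}_X$ is just $B^G$. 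Composing the two isomorphisms gives $\mathcal{O}_{X_\coarse}\xrightarrow{\sim}\R\pi_\ast\mathcal{O}_X$.

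For the second assertion I would first reduce to a convenient local situation. Whether an object of $\Dqc(X_\coarse)$ is perfect may be tested after an étale covering of $X_\coarse$; the formation of $\R\pi_\ast$ is compatible with such base change, the morphism $\pi$ being concentrated as its fibres are tame; and the coarse space, tameness, finite inertia, and the flatness and finite presentation of $\pi$ are all preserved under base change. Hence I may assume $X_\coarse=\Spec A$ and, by the structure theorem for tame stacks \cite[Theorem~3.2]{aov2008}, that $X=[\Spec B/G]$ for a finite, flat, linearly reductive group scheme $G$ over $A$ with $\Spec B$ finite over $X_\coarse$; in particular $A=B^G$ and $B$ is a finite $A$-module. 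The torsor projection $\Spec B\to X$ is flat, so by the hypothesis that $\pi$ is flat the composite $\Spec B\to X_\coarse$ is flat, and being finite and of finite presentation it makes $B$ a finite locally free $A$-module.

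To finish, I would use the equivalence $\D(\Qcoh(X))\cong\Dqc(X)$ under which $\Qcoh(X)$ is the category of $G$-equivariant quasi-coherent $B$-modules, $\pi_\ast$ becomes the invariants functor $(-)^G$ (exact by linear reductivity), and $\R\pi_\ast$ becomes its derived functor. Given $\mathcal{F}\in\Dpf(X)$, its image under the forgetful functor $\Dqc(X)\to\Dqc(\Spec B)$ is perfect, hence quasi-isomorphic to a bounded complex of finite locally free $B$-modules; since $B$ is finite locally free over $A$, restriction of scalars along $A\to B$ carries this to an object of $\Dpf(\Spec A)$, so the underlying $A$-complex of $\mathcal{F}$ is perfect. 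On the other hand, linear reductivity of $G$ furnishes, functorially in the $G$-module $N$, a Reynolds operator $N\to N^G$ splitting the inclusion $N^G\hookrightarrow N$; since all the functors involved are exact, this exhibits $(-)^G$ as a functorial retract of the underlying-$A$-module functor, and the retraction persists on the level of derived categories. Thus $\R\pi_\ast\mathcal{F}$ is a direct summand in $\Dqc(\Spec A)$ of a perfect complex, hence perfect, because $\Dpf(\Spec A)$ is a thick subcategory of $\Dqc(\Spec A)$.

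The formal content of the first assertion and the thick-subcategory bookkeeping at the end are routine. The real work, I expect, lies in the local reduction for the second assertion: invoking the structure theorem correctly, checking that $\R\pi_\ast$ is compatible with étale base change on $X_\coarse$ and corresponds to derived invariants in the quotient model, and — crucially — using flatness of $\pi$ to promote the coarse space morphism to a finite locally free morphism of affine schemes, along which restriction of scalars preserves perfection. An alternative to the Reynolds-operator argument would be to show directly that $\R\pi_\ast\mathcal{F}$ has bounded coherent cohomology and finite Tor-dimension over $\mathcal{O}_{X_\coarse}$, the latter via cohomology and base change together with the vanishing of higher cohomology on the tame fibres; but this seems to involve more delicate base-change machinery for stacks, so I would prefer the local-model approach above.
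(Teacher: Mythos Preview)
Your argument is correct. For the first assertion it matches the paper's reasoning: reduce to the coarse space being affine, invoke exactness of $\pi_\ast$ on $\Qcoh$ coming from tameness, and use that $\mathcal{O}_{X_\coarse}\to\pi_\ast\mathcal{O}_X$ is an isomorphism by the defining property of the coarse space. One technical point the paper makes explicit and you leave implicit is that this reduction is needed in order to identify $\R\pi_\ast$ on $\Dqc$ with the derived functor of $\pi_\ast\colon\Qcoh(X)\to\Qcoh(X_\coarse)$ via the equivalences $\D(\Qcoh)\simeq\Dqc$; but your local check covers this.

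For the second assertion you take a genuinely different route. After the same local reduction to $X_\coarse=\Spec A$, the paper works with a finite locally free cover $\alpha\colon U\to X$ by an affine scheme (Rydh), shows that $\alpha_\ast\mathcal{O}_U$ is a compact projective generator of $\Qcoh(X)$, identifies perfect complexes with bounded complexes of compact projectives, and then checks that $\pi_\ast$ preserves compact projectives because $\pi_\ast\alpha_\ast\mathcal{O}_U$ is finite locally free over $A$ (this is where flatness and finite presentation of $\pi$ enter). You instead invoke the full quotient presentation $X\cong[\Spec B/G]$ from the tame structure theorem and use the Reynolds idempotent for the linearly reductive $G$ to exhibit $\R\pi_\ast\mathcal{F}=\mathcal{F}^G$ directly as a summand of the perfect complex obtained by forgetting the $G$-action and restricting scalars along $A\to B$. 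Your argument is shorter and more concrete once the Reynolds splitting is available; the paper's is more categorical and does not require the explicit group-quotient description, only the existence of a finite locally free atlas. Both approaches exploit the flatness hypothesis in the same way, namely to make $\Spec B\to\Spec A$ (respectively $U\to X_\coarse$) finite locally free so that restriction of scalars preserves perfection.
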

\begin{proof}
To check that a morphism in the derived category $\Dqc(X_\cs)$ is
an isomorphism, we may pass to an fppf covering by affine schemes. 
Since the derived push-forward and the formation of the coarse space
commute with flat base change,
we reduce to the situation where $X_\coarse$ is affine.
Since $\pi\colon X \to X_\coarse$ is the canonical morphism to the
coarse space, it is separated and quasi-compact.
Furthermore $X$ has finite stabilizers.
By Remark~\ref{rem:dqc} and~\ref{rem:dqc2},
we can therefore identify $\Dqc(X)$ with $\D(\Qcoh(X))$ and similarly
for $X_\coarse$.
The canonical morphism
$\mathcal{O}_{X_\coarse}\to \pi_\ast\mathcal{O}_X$
is an isomorphism,
again because $\pi$ is the structure morphism to the coarse space.
Hence the first statement follows from the exactness of
$\pi_\ast\colon \Qcoh(X) \to \Qcoh(X_\coarse)$
which is a consequence of the tameness hypothesis.

For the other statement, we may again work locally on $X_\coarse$,
since perfectness of a complex is a local property.
Hence we may again assume that $X_\coarse$ is affine.
We can also assume that we have a finite locally free covering
$\alpha\colon U \to X$ by an affine scheme $U$
(cf. \cite[Theorem~6.10 and~Proposition~6.11]{rydh2013}).

In this situation, we claim that the object $\alpha_\ast\mathcal{O}_U$ is a
compact projective generator for $\Qcoh(X)$.
In particular, the category $\Qcoh(X)$ is equivalent to the category of
modules for a not necessarily commutative ring.

Now we prove the statement claimed above.
First note that since
$\alpha$ is affine, the functor $\alpha_\ast$ has a right adjoint $\alpha^\times$
with the property that $\alpha_\ast\alpha^\times =
\sheafHom_{\mathcal{O}_X}(\mathcal{\alpha_\ast\mathcal{O}_U}, -)$.
Since $\alpha_\ast\mathcal{O}_U$ is finite locally free,
it follows that the functor $\alpha_\ast\alpha^\times$ is exact,
faithful, and commutes with filtered colimits.
Since $\alpha$ is affine, the functor $\alpha_\ast$ reflects these properties,
which implies that also $\alpha^\times$ is exact, faithful and commutes
with filtered colimits.
Finally, since $U$ is affine, it follows that also the functor
$$
\Hom_{\mathcal{O}_X}(\alpha_\ast\mathcal{O}_U, -)
\cong
\Hom_{\mathcal{O}_U}(\mathcal{O}_U, \alpha^\times(-))
$$
has these properties, so $\alpha_\ast\mathcal{O}_U$ is indeed a
compact, projective generator for $\Qcoh(X)$. 

It follows that the compact objects of the derived category $\Dqc(X)$,
which coincides with $\D(\Qcoh(X))$ by Remark~\ref{rem:dqc},
are precisely those isomorphic to bounded
complexes of compact projective objects.
By tameness, the perfect objects of $\Dqc(X)$ coincide with the
compact objects \cite[Remark~4.6]{hr2014}.
Hence it suffices to show that
$\pi_\ast\colon \Qcoh(X) \to \Qcoh(X_\coarse)$
preserves compact projective objects.

To prove this, we assume that $\mathcal{P}$ is a compact,
projective object in $\Qcoh(X)$.
Since $\alpha_\ast(\mathcal{O}_U)$ is a compact, projective generator
of $\Qcoh(X)$,
there exists a split surjection
$\alpha_\ast(\mathcal{O}_U)^{\oplus n} \to \mathcal{P}$
for some positive integer $n$.
Hence also the pushforward
$\pi_\ast\alpha_\ast(\mathcal{O}_U)^{\oplus n} \to \pi_\ast\mathcal{P}$
is a split surjection.
But $\pi_\ast\alpha_\ast(\mathcal{O}_U)$ is finite locally free,
and hence compact and projective in $\Qcoh(X_\coarse)$,
by our assumption that $\pi \colon X \to X_\coarse$ is flat and of finite presentation.
It follows that $\pi_\ast\mathcal{P}$ is compact and projective,
which concludes the proof.
\end{proof}

\begin{example}
\label{ex:fully-faithful}
We give some examples of concentrated morphisms $f\colon X \to
Y$ of algebraic stacks such that
the functor $\R f_\ast \colon \Dqc(X) \ra \Dqc(Y)$ preserves
perfect complexes 
and the natural morphism $\mathcal{O}_Y
\to \R f_\ast \mathcal{O}_X$ is an isomorphism
(cf.\ Lemmas~\ref {lem:left-adjoint-la} and
\ref{lem:left-adjoint-ff}).
Note that these properties are fppf local on $Y$.
Examples where $f$ is representable are
\begin{enumerate}
\item
  \label{enum:blowups} 
  blow-ups of smooth algebraic stacks over a field in a
  smooth locus; 
\item more generally, proper birational morphisms between smooth algebraic
stacks over a field;
\item
  \label{enum:proj-bundle}
  projective bundles.
\end{enumerate}
Examples where $f$ is not necessarily representable are
\begin{enumerate}[resume]
\item
\label{it:fully-faithful-mu}
the morphism $\rho$ in the root diagram~(\ref{eq:root-diagram});
\item
\label{it:fully-faithful-root}
the morphism $\pi$ in the root diagram~(\ref{eq:root-diagram}).
\end{enumerate} 
The last two items follow by applying Lemma~\ref{l:tame-pushforward} after an
appropriate base change, and using part
\ref{enum:pi-proper-fp-birat},
\ref{enum:rho},
\ref{enum:coarse}, and
\ref{enum:tame}
of
Proposition~\ref{prop:basic-root}.
\end{example}

\begin{theorem}
  \label{thm:iishi-ueda}
  Let $X$ be an algebraic stack and $E \subset X$ an effective
  Cartier divisor.
  Fix a positive integer $r$ and let
  $\pi \colon \widetilde{X} = X_{r\inv E} \ra X$ be the $r$-th root
  construction of $X$ in $E$ with $\iota$ and $\rho$
  as in the root diagram \eqref{eq:root-diagram}.
  Then the functors
  \begin{equation}
    \label{eq:pi}
    \pi^* \colon \Dpf(X) \ra \Dpf(\widetilde{X}),
  \end{equation}
  \begin{equation}
    \label{eq:Phi_a}
    \Phi_a :=
    \mathcal{O}_{\widetilde{X}}\left(ar^{-1}E\right) \otimes
    \iota_*\rho^*(-)
    \colon \Dpf(E) \ra \Dpf(\widetilde{X})
  \end{equation}
  for $a \in \{1, \ldots, r-1\}$,
  are full and faithful and admit left and right adjoints.
  Furthermore, the category $\D(\widetilde{X})$ has the
  semiorthogonal decomposition
  \begin{equation}
    \label{eq:semi-od-root-construction}
    \D(\widetilde{X})
    =\big\langle
    \Phi_{r-1},
    \dots,
    \Phi_{1},
    \pi^*
    \big\rangle
  \end{equation}
  into admissible subcategories.
\end{theorem}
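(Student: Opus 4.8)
The plan is to verify the three ingredients of a semiorthogonal decomposition in turn: full faithfulness of each functor, the semiorthogonality (Hom-vanishing) between them in the prescribed order, and generation of $\D(\widetilde X)$. Throughout I would work locally on $X$ using the description of Example~\ref{ex:root-local-description}, where $\widetilde X = [\Spec R'/\rmu_r]$ with $R' = R[t]/(t^r - f)$, so that quasi-coherent sheaves on $\widetilde X$ are $\ZZ/r\ZZ$-graded $R'$-modules and $E = V(f)$, $r^{-1}E = V(t)$.

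First, full faithfulness. For $\pi^*$ this is immediate from Lemma~\ref{lem:left-adjoint-ff} together with item~\ref{it:fully-faithful-root} of Example~\ref{ex:fully-faithful}, which says $\mathcal{O}_X \to \R\pi_*\mathcal{O}_{\widetilde X}$ is an isomorphism. For $\Phi_a$, note that tensoring with the line bundle $\mathcal{O}_{\widetilde X}(ar^{-1}E)$ is an autoequivalence, so it suffices to show $\iota_*\rho^* \colon \Dpf(E) \to \Dpf(\widetilde X)$ is full and faithful; since $\rho^*$ is fully faithful by item~\ref{it:fully-faithful-mu} of Example~\ref{ex:fully-faithful} (again via Lemma~\ref{lem:left-adjoint-ff}), I reduce to showing $\iota_*$ is fully faithful on the image of $\rho^*$, i.e. that $\L\iota^*\iota_*\rho^*\mathcal{G} \to \rho^*\mathcal{G}$ admits a section functorially, which I extract from the triangle in Lemma~\ref{l:pull-push-triangle}: the fiber term is $\rho^*\mathcal{G} \otimes \iota^*\mathcal{O}_X(-E)[1]$, and since $\iota^*\mathcal{O}_X(-E) \cong \mathcal{O}_{r^{-1}E}(-rr^{-1}E) = \mathcal{O}(-r \cdot r^{-1}E)$ restricted to the $\rmu_r$-gerbe $r^{-1}E$, this line bundle is $\rho^*(\mathcal{O}_E(-E))$-pulled-back but carries $\rmu_r$-weight $-r \equiv 0$; the point is that the connecting map in the triangle corresponds, after the equivalence $\Dqc(X,\iota_*\mathcal{O}_E) \simeq \Dqc(X,\mathcal R)$, to multiplication by the class of $t$, which vanishes on objects pulled back along $\rho$ because $t$ has nontrivial $\rmu_r$-weight — so the triangle splits on the image of $\rho^*$. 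The existence of left and right adjoints to $\pi^*$ and $\Phi_a$ then follows from Lemmas~\ref{lem:left-adjoint-la} and~\ref{lem:right-adjoint} once one knows $\R\pi_*$, $\iota_*$, $\iota^\times$ preserve perfect complexes (established in those lemmas and Example~\ref{ex:fully-faithful}).

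Second, semiorthogonality. I must check $\Hom(\Phi_a\mathcal{F}, \pi^*\mathcal{G}) = 0$ and $\Hom(\Phi_b\mathcal{F}, \Phi_a\mathcal{F}') = 0$ for $r-1 \ge b > a \ge 1$. By adjunction, $\Hom(\Phi_a\mathcal{F},\pi^*\mathcal{G}) = \Hom(\mathcal{F}, \rho_*\iota^!(\mathcal{O}(-ar^{-1}E)\otimes\pi^*\mathcal{G}))$; using $\iota^!(-) \cong \iota^*(-)\otimes\mathcal{O}_X(E)|_E[-1]$ (the Cartier divisor case), and $\iota^*\pi^*\mathcal{G} \cong \rho^*\kappa^*\mathcal{G}$, the relevant sheaf on $r^{-1}E$ is $\rho^*(\text{something})$ tensored with $\mathcal{O}(-a \cdot r^{-1}E)|_{r^{-1}E}$, a line bundle of $\rmu_r$-weight $-a$ with $0 < a < r$; pushing forward along the gerbe $\rho$ kills it since $\rho_*$ extracts the weight-zero part. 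The pairwise vanishing among the $\Phi_a$ is similar but records the weight difference $a - b$, which lies in $\{-(r-1),\dots,-1\} \pmod r$, again nonzero, so $\rho_*$ of the relevant twisted sheaf vanishes. This is where I expect the main bookkeeping to live — keeping track of which twist $\mathcal{O}(cr^{-1}E)$ carries which $\rmu_r$-weight after repeated applications of $\iota^*$, $\iota^!$, and the self-intersection formula $r^{-1}E|_{r^{-1}E}$ — but it is entirely mechanical once the gerbe structure of $\rho$ is used to reduce all vanishing statements to "a line bundle of nonzero weight has no $\rmu_r$-invariant sections."

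Third, generation. I would show that the smallest thick subcategory of $\D(\widetilde X)$ containing the essential images of $\Phi_{r-1},\dots,\Phi_1,\pi^*$ is everything. Locally, $\D(\widetilde X)$ is the derived category of $\ZZ/r\ZZ$-graded $R'$-modules; filtering a graded module by its graded pieces, one sees $\D(\widetilde X)$ is generated by the line bundles $\mathcal{O}_{\widetilde X}(a r^{-1}E)$ for $a = 0,\dots,r-1$ together with the image of $\pi^*$ (the $R$-linear part). Using the triangle of Lemma~\ref{l:pull-push-triangle} applied to $\mathcal{O}_E$ — which expresses $\iota_*\mathcal{O}_E$, hence each $\Phi_a(\mathcal{O}_E) = \mathcal{O}_{\widetilde X}(ar^{-1}E)\otimes\iota_*\mathcal{O}_{r^{-1}E}$, in terms of adjacent twists and pullbacks — one obtains inductively that every $\mathcal{O}_{\widetilde X}(ar^{-1}E)$ lies in $\langle\Phi_{r-1},\dots,\Phi_1,\pi^*\rangle$; combined with the fact that these twists plus $\pi^*\Dpf(X)$ generate, this gives the claim. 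Finally, since each of the listed subcategories is the image of a fully faithful functor with both adjoints, each is admissible, completing the semiorthogonal decomposition \eqref{eq:semi-od-root-construction}.
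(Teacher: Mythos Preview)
Your semiorthogonality check has the direction reversed. In the decomposition $\langle \Phi_{r-1}, \dots, \Phi_1, \pi^*\rangle$ with the paper's convention (no maps from a later-listed factor to an earlier one), what must vanish is $\Hom(\pi^*\mathcal H,\Phi_a\mathcal F)$ for all $a$, and $\Hom(\Phi_a\mathcal F,\Phi_b\mathcal F')$ for $a<b$. You instead propose to verify $\Hom(\Phi_a\mathcal F,\pi^*\mathcal G)=0$ and $\Hom(\Phi_b\mathcal F,\Phi_a\mathcal F')=0$ for $b>a$, and these are \emph{false} in general. For instance, with $a=1$ your own adjunction gives
\[
\Hom(\Phi_1\mathcal F,\pi^*\mathcal G)\cong\Hom\bigl(\rho^*\mathcal F,\iota^\times(\mathcal O(-r^{-1}E)\otimes\pi^*\mathcal G)\bigr),
\]
and since $\iota^\times(-)\cong\iota^*(-)\otimes\mathcal N^{-1}[-1]$ with $\mathcal N=\iota^*\mathcal O_{\widetilde X}(-r^{-1}E)$ of $\rmu_r$-weight $1$, the right-hand side has weight $1-1=0$ and reduces to $\Hom_E(\mathcal F,\kappa^*\mathcal G[-1])$, which need not vanish. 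Similarly, using the triangle of Lemma~\ref{l:pull-push-triangle} one computes that $\Hom(\Phi_{a+1}\mathcal F,\Phi_a\mathcal F')$ picks up the term with $\chi+1=\psi$ in the weight bookkeeping and is generically nonzero. The paper's proof instead verifies $\Hom(\pi^*\mathcal H,\iota_*\mathcal G)=0$ for $\mathcal G$ of nonzero weight (via $\L\iota^*\pi^*\cong\rho^*\L\kappa^*$, landing in weight $0$) and deduces the $\Phi_a$--$\Phi_b$ orthogonality from the Hom computation \eqref{eq:hom-divisor}.

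There is also a muddle in your full-faithfulness step for $\Phi_a$: the conormal bundle in the triangle of Lemma~\ref{l:pull-push-triangle} applied to $\iota\colon r^{-1}E\hookrightarrow\widetilde X$ is $\mathcal N=\iota^*\mathcal O_{\widetilde X}(-r^{-1}E)$, which has $\rmu_r$-weight $1$, not $\iota^*\mathcal O_X(-E)$ of weight $-r\equiv 0$ as you write. Your conclusion that the triangle splits on the image of $\rho^*$ is nevertheless correct, precisely \emph{because} the weight is $1\neq 0$ (so the connecting map between weight-$0$ and weight-$1$ objects vanishes); but the intermediate sentence contradicts itself. The paper handles this more cleanly by decomposing $\Dpf(r^{-1}E)=\bigoplus_\chi\Dpf(r^{-1}E)_\chi$ once and for all and reading off from \eqref{eq:hom-divisor} that $\iota_*$ is fully faithful on each weight piece. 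Your generation argument and the existence of adjoints are essentially the same as the paper's.
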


Recall
that $\pi$ and $\rho$ are flat and that $\iota$ is the embedding
of the Cartier divisor $r^{-1}E$ (Proposition~\ref{prop:basic-root},
part
\ref{enum:cartier},
\ref{enum:pi-proper-fp-birat},
\ref{enum:rho})
and that $\mathcal{O}_{\widetilde{X}}(ar^{-1}E)$ is a
line bundle. Therefore we omitted the usual decorations for derived
functors in  
\eqref{eq:pi} and
\eqref{eq:Phi_a}.
Also note that $\Phi_a$ is well-defined by Lemma~\ref{lem:right-adjoint}.

\begin{proof}[Proof of Theorem~\ref{thm:iishi-ueda}]
Both functors 
$\rho^\ast \colon \Dpf(E) \ra \Dpf(r^{-1}E)$ and
$\pi^\ast \colon \Dpf(X) \ra \Dpf(\widetilde{X})$ are
full and faithful and admit left and right adjoints 
by part \ref{it:fully-faithful-mu} and
\ref{it:fully-faithful-root}
of
Example~\ref{ex:fully-faithful} and
Lemmas~\ref{lem:left-adjoint-la} and \ref{lem:left-adjoint-ff}.
Since tensoring with a line bundle is an autoequivalence
and since $\iota_* \colon \Dpf(r^{-1}E) \ra \Dpf(\widetilde{X})$
admits left and right adjoints, by Lemma~\ref{lem:right-adjoint},
we
deduce that the functors $\Phi_a$ admit left and right adjoints.

The stack $r^{-1}E$ is a $\rmu_r$-gerbe over $E$.
Therefore the category $\Mod(r^{-1}E)$ splits as
a direct sum
$
\bigoplus_{\chi = 0}^{r-1} \Mod(r^{-1}E)_\chi
$
according to the characters of the inertial action.
This induces a corresponding decomposition
$
\bigoplus_{\chi = 0}^{r-1} \Dpf(r^{-1}E)_\chi
$
of the triangulated category $\Dpf(r^{-1}E)$. The essential image
of $\rho^*$ is $\Dpf(r^{-1}E)_0$.

Consider 
\begin{equation*}
\Hom_{\D(\widetilde{X})}(\iota_\ast\mathcal{F}, \iota_\ast\mathcal{G})
\cong
\Hom_{\D(r^{-1}E)}(\L\iota^\ast\iota_\ast\mathcal{F}, \mathcal{G})
\label{eq:root-hom}
\end{equation*}
for $\mathcal{F}$, $\mathcal{G} \in \Dpf(r^{-1}E)$.
Since $\iota$ is the inclusion of an effective Cartier divisor,
Lemma~\ref{l:pull-push-triangle} provides a triangle
\begin{equation*}
  \mathcal{F} \otimes
  \mathcal{N}[1]
  \ra
  \L\iota^* \iota_* \mathcal{F}
  \ra 
  \mathcal{F}
  \ra
  \mathcal{F} \otimes
  \mathcal{N}[2]
\end{equation*}
where 
$\mathcal{N} = \iota^\ast
\mathcal{O}_{\widetilde{X}}(-r^{-1}E)$
is the conormal line bundle
of the closed immersion $\iota$.

Now assume that $\mathcal{F} \in \Dpf(r^{-1}E)_\chi$ and
$\mathcal{G} \in \Dpf(r^{-1}E)_\psi$.
Since $\mathcal{N} \in \Dpf(r^{-1}E)_1$, the above triangle
enables us to compute
\begin{equation}
\label{eq:hom-divisor}
\Hom_{\D(\widetilde{X})}(\iota_\ast\mathcal{F}, \iota_\ast\mathcal{G})
\cong
\left\{
\begin{array}{ll}
\Hom_{\D(r^{-1}E)}(\mathcal{F}, \mathcal{G}) & \text{if } \chi = \psi,\\
\Hom_{\D(r^{-1}E)}(\mathcal{F}[1]\otimes \mathcal{N}, \mathcal{G}) & \text{if 
$\chi+1=\psi$ in $\DZ/r$,}\\
0 & \text{otherwise.}\\
\end{array}
\right.
\end{equation}
In particular, we see that the restriction of the functor
$\iota_\ast$ to the category $\Dpf(r^{-1}E)_\chi$ is full and
faithful for each $\chi$. As a consequence, all functors $\Phi_a$
are full and faithful.

Moreover, given $\mathcal{H} \in \Dpf(X)$, we have
\begin{equation*}
\label{eq:hom-main}
\Hom_{\D(\widetilde{X})}(\pi^\ast\mathcal{H}, \iota_\ast\mathcal{G})
\cong
\Hom_{\D(r^{-1}E)}(\L\iota^\ast\pi^\ast\mathcal{H}, \mathcal{G})
\cong
\Hom_{\D(r^{-1}E)}(\rho^\ast\L\kappa^\ast\mathcal{H}, \mathcal{G}),
\end{equation*}
which vanishes if $\psi \neq 0$
since the essential image of $\rho^*$ is $\Dpf(r^{-1}E)_0$.

This, together with the third equality in~(\ref{eq:hom-divisor})
shows that
\begin{equation}
\iota_\ast \Dpf(r^{-1}E)_1,
\ldots,
\iota_\ast \Dpf(r^{-1}E)_{r-1},
\pi^\ast \Dpf(X)
\end{equation}
is a semiorthogonal sequence.
The projection formula \cite[Corollary~4.12]{hr2014} shows that
$\Phi_a \cong \iota_*(\mathcal{N}^{\otimes(-a)} \otimes \rho^*(-))$.
Hence the essential image of $\Phi_a$ lies in $\iota_*
\Dpf(r^{-1}E)_{-a}$ and the 
essential images of the functors in
\eqref{eq:semi-od-root-construction} form a semiorthogonal
sequence. 

Let $\mathcal{T}$ denote the smallest strict full triangulated subcategory
of $\Dpf(r^{-1}E)$ which contains all these essential
images. Then
\begin{equation*}
  \mathcal{T}= 
  \big\langle
  \Phi_{r-1},
  \dots,
  \Phi_{1},
  \pi^*
  \big\rangle
\end{equation*}
is a semiorthogonal decomposition into admissible subcategories.
It remains to prove that $\mathcal{T} = \Dpf(X_{r^{-1}E})$.
This can be done fppf locally on $X$ by conservative descent~\cite{bs2016}.
Hence we may work with the local description given in 
Example~\ref{ex:root-local-description}.
Using the notation from the example,
the category $\Qcoh(X_{r^{-1}E})$
is equivalent to the category of
$\mathbb{Z}/r\mathbb{Z}$-graded $R'$-modules.
We use the symbol $\langle -\rangle$ to denote
shifts with respect to the $\mathbb{Z}/r\mathbb{Z}$-grading.
More precisely, given a graded $R'$-module $M=\bigoplus M^n$,
we write $M\langle i\rangle$ for the graded $R'$-module with
components $(M \langle i\rangle)^n=M^{i+n}$.

Note that $P = R'\langle 0 \rangle \oplus \cdots \oplus R'\langle r-1 \rangle$
is a compact projective generator of $\Qcoh(X_{r^{-1}E})$.
This implies that $P$ is a classical generator of $\Dpf(X_{r^{-1}E})$.
Since each of the semiorthogonal summands of $\mathcal{T}$ is idempotent complete,
the same holds for $\mathcal{T}$.
Therefore, it is enough to prove that $R'\langle i\rangle$ is contained in $\mathcal{T}$
for each~$i$.
But $\mathcal{T}$ contains
$
\pi^\ast \mathcal{O}_X = R'\langle 0\rangle$,
and
$
\Phi_{i} \mathcal{O}_E = R'/(t)\langle i\rangle=R/(f)\langle i\rangle
$
for $i \in \{1, \ldots r-1\}$,
so this follows from the triangles
$$
R'\langle i-1\rangle \xra{t}
R'\langle i\rangle \to
R/(f)\langle i\rangle \to
R'\langle i-1\rangle[1] 
$$
and induction on $i$ starting with $i = 1$. 
\end{proof}

\begin{remark}
In \cite{iu2011}, Ishii and Ueda state
Theorem~\ref{thm:iishi-ueda}
for bounded derived categories of coherent sheaves.
They assume that $X$ and $E$ are quasi-compact, separated
Deligne--Mumford stacks which are smooth over $\mathbb{C}$
(although not all of these conditions are explicitly mentioned).
Under these hypotheses the triangulated categories
$D^\bd(\Coh(X))$, $D^\bd(\Coh(E))$,
and~$D^\bd(\Coh(X_{r^{-1}E}))$ are equivalent to the
categories $\Dpf(X)$, $\Dpf(E)$,
and~$\Dpf(X_{r^{-1}E})$ respectively (cf.~Remark~\ref{rem:D-Coh-and-DbCoh=Dpf}).
\end{remark}


Next, we generalize Theorem~\ref{thm:iishi-ueda} to
iterated root stacks.

\begin{theorem}
\label{thm:semi-orthogonal-iterated}
Let $X$ be an algebraic stack and $E$ a generalized snc divisor
on $X$ with components indexed by $I$.
Fix a multi-index $r > 0$ in $\DZ^I$ and let 
$X_{r^{-1}E}$ be the $r$-th root stack as
in Definition~\ref{def:iterated-root}.
For any multi-index $a$ satisfying $r > a \geq 0$
denote its support by $I_a \subseteq I$ and 
consider the diagram
\begin{equation}
\label{eq:iterated-root-diagram}
\xymatrix{
r^{-1}E(I_a) \ar[r]^-{\iota_a} \ar[d]_-{\rho_a}
& 
{X_{r^{-1}E}} \ar[d]^-{\pi}
\\
{E(I_a)} \ar[r]
&
{X,}
}
\end{equation}
where $E(I_a):= \cap_{i \in I_a} E_i$ and
$r^{-1}E(I_a):= \cap_{i \in I_a} (r^{-1}E)_i$.
Then all the functors
\begin{equation}
\label{eq:transform-functor}
\Phi_a := 
\mathcal{O}_{X_{r^{-1}E}}(ar^{-1}E)
\otimes
(\iota_a)_\ast\rho^\ast_a(-)
\colon \Dpf(E(I_a)) \to \Dpf(X_{r^{-1}E})
\end{equation}
are full and faithful and admit left and right adjoints.
Furthermore, the category $\Dpf(X_{r^{-1}E})$ has the semiorthogonal
decomposition
\begin{equation}
\label{eq:iterated-decomposition}
\langle \Phi_{a} \mid r > a \geq 0\rangle
\end{equation}
into admissible subcategories.
Here the multi-indexes $a$ with $r > a \geq 0$ are
arranged into any sequence $a^{(1)}, a^{(2)}, \dots, a^{(m)}$ such that $a^{(s)} \geq a^{(t)}$ implies $s \leq t$ for
all $s, t \in \{1, \dots, m\}$ where $m=\prod_{i \in I} r_i$.
\end{theorem}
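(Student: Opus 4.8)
The plan is to bootstrap from the single-divisor case (Theorem~\ref{thm:iishi-ueda}) by an induction on $|I|$, using the iteration property of root stacks (Proposition~\ref{prop:basic-iterated-root}.\ref{it:iterated-root}) to peel off one component at a time. Write $I = I' \sqcup \{n\}$, let $r'$ be the restriction of $r$ to $I'$, and factor the root construction as $X_{r^{-1}E} = (X_{(r')^{-1}E'})_{r_n^{-1}\widetilde{E}_n}$, where $E' = (E_i)_{i \in I'}$ and $\widetilde{E}_n$ is the transform of $E_n$. By Proposition~\ref{prop:basic-iterated-root} the transform $(r')^{-1}E'$ is again a generalized snc divisor on $Y := X_{(r')^{-1}E'}$, and $\widetilde{E}_n$ together with that transform is still generalized snc, so all the hypotheses are preserved under the induction. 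Applying Theorem~\ref{thm:iishi-ueda} to the last root yields a semiorthogonal decomposition of $\Dpf(X_{r^{-1}E})$ with $r_n$ pieces: one copy of $\pi_n^*\Dpf(Y)$ and $r_n - 1$ copies of $\Dpf(r_n^{-1}\widetilde{E}_n)$, twisted by powers of $\mathcal{O}(a_n r_n^{-1}\widetilde{E}_n)$. Then I would feed the inductive hypothesis into each of these pieces: $\Dpf(Y)$ decomposes according to multi-indexes $b$ with $r' > b \geq 0$ indexing divisors $E'(I_b)$, and each $\Dpf(r_n^{-1}\widetilde{E}_n)$ — which is itself a root stack over $\widetilde{E}_n$ of the restricted snc divisor — decomposes similarly. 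Reassembling, each resulting summand is indexed by a pair $(b, a_n)$, i.e.\ by a full multi-index $a \in \mathbb{Z}^I$ with $r > a \geq 0$, and one checks that the summand is exactly the essential image of the claimed functor $\Phi_a$.

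The key technical points to verify along the way are: first, that the composite functor one obtains by nesting $\Phi$'s matches the closed-form $\Phi_a = \mathcal{O}(ar^{-1}E) \otimes (\iota_a)_*\rho_a^*(-)$ on the nose — this requires identifying $(\iota_a)_*\rho_a^*$ with the appropriate composite of pushforwards along the nested Cartier divisors and pullbacks along the nested gerbes, plus a bookkeeping check that the line-bundle twists $\mathcal{O}(a_i r_i^{-1}E_i)$ multiply correctly (here one uses that $E(I_a) = \cap_{i \in I_a} E_i$ is a Cartier divisor on each partial intersection, which is precisely the generalized snc hypothesis, and that the component divisors on the root stack pull back and restrict compatibly). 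Second, the mutual semiorthogonality $\Hom(\Phi_a(-), \Phi_{a'}(-)) = 0$ for $a$ preceding $a'$ in the chosen order: for pairs where $a_n \ne a'_n$ this is handled by the single-divisor Hom-computation \eqref{eq:hom-divisor} applied to the last root (using the $\mu_{r_n}$-gerbe character decomposition of $r_n^{-1}\widetilde{E}_n$), and for pairs with $a_n = a'_n$ it reduces, after restricting to the relevant divisor, to the inductive hypothesis on $Y$ or on $\widetilde{E}_n$. Third, generation: the summands generate because they generate fibrewise over each stage of the iteration, which one can see fppf-locally exactly as in the proof of Theorem~\ref{thm:iishi-ueda}, where $R' = R[t_i]/(t_i^{r_i} - f_i)$ has the evident compact projective generator $\bigoplus_a R'\langle a\rangle$ and the Koszul-type triangles $R'\langle a - e_i\rangle \xrightarrow{t_i} R'\langle a\rangle \to R'/(t_i)\langle a\rangle \to [1]$ let one induct component-by-component.

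For the ordering condition, I would argue that the partial order ``$a$ precedes $a'$ if $a' \geq a$ is forbidden'' is consistent with semiorthogonality by the following observation: if $a' \not\geq a$, then there is some $i$ with $a'_i < a_i$; restricting attention to the $i$-th root and using \eqref{eq:hom-divisor} for the $\mu_{r_i}$-gerbe, the Hom either vanishes outright or shifts the character gap, and iterating over all coordinates where $a, a'$ differ shows the total Hom vanishes unless $a \leq a'$ everywhere — contradiction. Hence any linear extension of the reverse of the product partial order (equivalently, any sequence in which $a^{(s)} \geq a^{(t)} \Rightarrow s \leq t$) gives a valid semiorthogonal ordering; that this can always be done is a standard fact about finite posets. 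The admissibility of the summands is automatic from the existence of left and right adjoints to $\Phi_a$, which in turn follows from Lemma~\ref{lem:right-adjoint}, Lemma~\ref{lem:left-adjoint-la}, Lemma~\ref{lem:left-adjoint-ff}, and Example~\ref{ex:fully-faithful} applied at each stage of the iteration.

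**The main obstacle** I anticipate is not any single deep input — everything rests on Theorem~\ref{thm:iishi-ueda} and formal adjunction lemmas — but rather the combinatorial bookkeeping: making the nested-decomposition reassembly precise enough that the indexing by multi-indexes $a$ is manifestly canonical (independent of the order in which components are peeled off), and verifying that the closed-form functor $\Phi_a$ genuinely equals the iterated composite. The generalized (non-smooth, possibly empty or non-reduced components) setting is what forces care here, since one cannot simply invoke smoothness to simplify intersections; the payoff is that the generalized snc hypothesis is exactly what makes each $E(I_a) \hookrightarrow E(I_a \setminus \{i\})$ Cartier, which is all that the pushforward functors in $\Phi_a$ need.
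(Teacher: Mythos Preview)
Your proposal is correct and follows essentially the same approach as the paper: induct by peeling off one component (the paper factors $r = s\cdot t$ with $t$ supported at a single index $i_0$ and inducts on the number of $r_i > 1$, which is the same as your induction on $|I|$), apply Theorem~\ref{thm:iishi-ueda} to the outer root, apply the induction hypothesis to each resulting piece, verify via the projection formula and flat base change that the nested composite agrees with $\Phi_a$, and then obtain the poset ordering by noting that the distinguished index was arbitrary. One slip to fix: where you write $\Dpf(r_n^{-1}\widetilde{E}_n)$ you mean $\Dpf(\widetilde{E}_n)$ --- the source of the single-divisor functors $\Phi_{a_n}$ in Theorem~\ref{thm:iishi-ueda} is $\Dpf$ of the divisor on the base $Y$, and it is $\widetilde{E}_n = E_n \times_X Y$ (a root stack over $E_n$ in the restricted generalized snc divisor, to which the induction hypothesis applies), not the $\mu_{r_n}$-gerbe $r_n^{-1}\widetilde{E}_n$.
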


\begin{remark}
  \label{rmk:iterated-root-special}
  If $r$ has at most one coordinate which is
  strictly bigger
  than one,
  then the root stack $X_{r^{-1}E}$ is isomorphic to a non-iterated
  root stack,
  and  we recover Theorem~\ref{thm:iishi-ueda}.
\end{remark}

\begin{example}
  \label{exam:T-semiorthog-decom-two-divisors}
  If our generalized snc divisor $E$ has two components $E_1=D$,
  $E_2=F$ and  
  $r_1=4$ and $r_2=3$, the Hasse diagram of the poset $\{a \in \DZ^2
  \mid r > a \geq 0\}$
  (with arrows pointing to smaller elements) looks as follows.
  \begin{equation}
    \xymatrix@ur@=3ex{
      \ar[d] 
      {(3,2)}_{D \cap F} \ar[r] & \ar[d]
      {(2,2)}_{D \cap F} \ar[r] & \ar[d]
      {(1,2)}_{D \cap F} \ar[r] & \ar[d]
      {(0,2)_{F}} 
      \\
      \ar[d] 
      {(3,1)}_{D \cap F} \ar[r] & \ar[d]
      {(2,1)}_{D \cap F} \ar[r] & \ar[d]
      {(1,1)}_{D \cap F} \ar[r] & \ar[d]
      {(0,1)_{F}} 
      \\
      {(3,0)_{D}} \ar[r] &
      {(2,0)_{D}} \ar[r] &
      {(1,0)_{D}} \ar[r] &
      {(0,0)_{X}} 
    }
  \end{equation}
  The index at a vertex $a=(a_1,a_2)$ is $E(I_a)$.
  If we think of such a vertex as representing the 
  essential image of $\Dpf(E(I_a))$
  under the fully faithful functor $\Phi_a$, this gives a nice
  way to visualize the 
  semiorthogonal decompositions
  \eqref{thm:semi-orthogonal-iterated}
  for all allowed sequences $a^{(1)}, \dots, a^{(12)}$ at once.
  If there is a nonzero morphism from an object of the category
  represented by a vertex $a$ to an object of the category
  represented by a vertex $b$, then there is a directed path from
  $a$ to $b$ in the Hasse diagram.
  Of course, we could have used the concept of a semiorthogonal
  decomposition indexed by a poset.
\end{example}

\begin{proof}[Proof of Theorem~\ref{thm:semi-orthogonal-iterated}]
We use induction on the number of coordinates of $r$ which are
strictly bigger than one.
In light of Remark~\ref{rmk:iterated-root-special},
we may assume that $r_{i_0} > 1$ for some index $i_0 \in
I$ which we fix.
Then the multi-index $r$ factors as $s\cdot t$,
where $t = (t_i)$ satisfies $t_{i_0} = r_{i_0}$ 
and $t_i = 1$ for all $i \not= i_0$.
By Proposition~\ref{prop:basic-iterated-root}.\ref{it:iterated-root},
the root stack $X_{r^{-1}E} \to X$ decomposes into a sequence
\begin{equation}
\label{eq:root-sequence}
X_{r^{-1}E} \to X_{s^{-1}E} \to X
\end{equation}
of root stacks.

Any multi-index $r> a \geq 0$ can be uniquely written as a sum
$a = a' + a''$ with $t > a' \geq 0$ and $s > a'' \geq 0$,
and this gives a bijective correspondence between the set of
multi-indexes $a$ with $r > a \geq 0$ and pairs $(a',a'')$ of
multi-indexes satisfying $t > a' \geq 0$ and $s > a'' \geq 0$.
The support $I_{a'}$ of such a multi-index $a'$ is contained
in $\{i_0\}$. 

For any such multi-index $a=a'+a''$ the sequence
\eqref{eq:root-sequence}
induces the decomposition
\begin{equation}
\label{eq:decomposed-transform}
\xymatrix{
r^{-1}E(I_a) \ar[r]\ar[d]
  & r^{-1}E(I_{a'}) \ar[r]\ar[d]
  & X_{r^{-1}E}\ar[d]
\\
s^{-1}E(I_a) \ar[r]\ar[d]
  & s^{-1}E(I_{a'}) \ar[r]\ar[d]
  & X_{s^{-1}E}\ar[d]
\\
E(I_a) \ar[r]
  & E(I_{a'}) \ar[r]
  & X
\\
}
\end{equation}
of the diagram \eqref{eq:iterated-root-diagram}.

In the rest of this proof, we call a diagram of the form
\eqref{eq:iterated-root-diagram} a transform diagram.
The upper right square in \eqref{eq:decomposed-transform} depends
on the support of $a'$ (but not on $a$ and $a''$) and 
is a transform diagram for the $t$-th root of the divisor
$s^{-1}E$ in $X_{s^{-1}E}$. 
If $I_{a'}=\{i_0\}$ it is, in fact, 
by the special form of $t$, 
a root diagram 
for the $r_{i_0}$-th root of the effective Cartier divisor
$(s^{-1}E)_{i_0}$ on $X_{s^{-1}E}$.
Denote the functors corresponding to \eqref{eq:transform-functor}
for this root construction by
$\Phi'_{a'}\colon \Dpf(s^{-1}E(I_{a'})) \to \Dpf(X_{r^{-1}E})$.
Then Theorem~\ref{thm:iishi-ueda} yields the semiorthogonal
decomposition 
\begin{equation}
\label{eq:semiorthogonal-a'}
\Dpf(X_{r^{-1}}E)=\langle\Phi'_{a'} \mid t > a' \geq 0\rangle.
\end{equation}

Fix $t > a' \geq 0$ for a moment.
Note that the lower right square of \eqref{eq:decomposed-transform}
is cartesian. This is trivial if $I_{a'}=\emptyset$ and otherwise
follows from the fact that $s_{i_0} = 1$.
As a consequence, $s^{-1}E(I_{a'}) \to E(I_{a'})$ is an $s'$-th
root of the generalized snc divisor 
$E'= (E_i \cap E(I_{a'}))_{i \in I -\{i_0\}}$
where $s'=(s_i)_{i \in I -\{i_0\}}$. 
The corresponding transform diagram for $s > a'' \geq 0$ is the
lower left square of
diagram \eqref{eq:decomposed-transform} where $a=a'+a''$.
Let $\Phi''_{a''}\colon \Dpf(E(I_{a})) \to \Dpf(s^{-1}E(I_{a'}))$
denote the functors corresponding to \eqref{eq:transform-functor}
for this iterated root construction; here $a''$ is identified with its
restriction to $I - \{i_0\}$.
By the induction hypothesis,
we obtain the semiorthogonal decomposition 
\begin{equation}
  \label{eq:semiorthogonal-a''}
  \Dpf(s^{-1}E(I_{a'}))=\langle\Phi''_{a''} \mid s > a'' \geq 0\rangle.
\end{equation}

Combining the decompositions 
\eqref{eq:semiorthogonal-a'}
and \eqref{eq:semiorthogonal-a''}
yields the
semiorthogonal decomposition   
\begin{equation}
\label{eq:outer-semiorthogonal}
\Dpf(X_{r^{-1}}E)=
\langle\Phi'_{a'}\circ\Phi''_{a''} \mid
t > a' \geq 0,\ s > a'' \geq 0\rangle.
\end{equation}

Next, we establish an isomorphism $\Phi_{a} \cong
\Phi'_{a'}\circ\Phi''_{a''}$ for $a=a'+a''$ as above.
Let
$$
\mathcal{L}' 
=\mathcal{O}_{X_{r^{-1}E}}(a'r^{-1}E),
\qquad
\mathcal{L}'' 
=\mathcal{O}_{X_{s^{-1}E}}(a''s^{-1}E).
$$
Furthermore, we denote the horizontal arrows in \eqref{eq:decomposed-transform}
by $\iota_{ij}$ and the vertical arrows by $\rho_{ij}$,
where $i$ denotes the row and $j$ the column of domain of the morphism
as viewed in the diagram.
With this notation, we have identities
$$
\Phi'_{a'} =
\mathcal{L}'\otimes (\iota_{12})_\ast \rho_{12}^\ast(-),
\qquad
\Phi''_{a''} =
\iota_{22}^\ast\mathcal{L}''\otimes (\iota_{21})_\ast\rho_{21}^\ast(-).
$$
Consider the composition $\Phi'_{a'} \circ \Phi'_{a''}$ of these
two functors. 
By the projection formula for $\iota_{12}$
(\cite[Corollary~4.12]{hr2014})
and the fact that pullbacks and tensor products commute,
we see that this composition
is isomorphic to
\begin{equation}
\label{eq:composed-functors}
\mathcal{L}' \otimes
\rho_{13}^\ast\mathcal{L}'' \otimes 
(\iota_{12})_\ast\rho_{12}^\ast
(\iota_{21})_\ast\rho_{21}^\ast(-).
\end{equation}
Since the upper left square in \eqref{eq:decomposed-transform} is
cartesian, flat base change (\cite[Theorem~2.6.(4)]{hr2014})
along the flat morphism $\rho_{12}$ (Proposition~\ref{prop:basic-root}.\ref{enum:rho})
shows that our composition \eqref{eq:composed-functors} is
isomorphic to 
\begin{equation}
\label{eq:composed-functors-2}
\mathcal{L}'\otimes
\rho_{13}^\ast\mathcal{L}''\otimes 
(\iota_{12}\circ\iota_{11})_\ast
(\rho_{11}\circ\rho_{21})^\ast.
\end{equation}
Now
$$
\mathcal{L}'\otimes \rho_{13}^\ast\mathcal{L}'' = 
\mathcal{O}_{X_{r^{-1}E}}\left((a' + ta'')r^{-1}E\right). 
$$
But $a' + ta'' = a'+a''=a$ since $t_i = 1$ for $i$ in the support
of $a''$, 
so \eqref{eq:composed-functors-2} is indeed isomorphic
to~$\Phi_{a}$. 
This shows
$\Phi_{a} \cong
\Phi'_{a'}\circ\Phi''_{a''}$.

Hence
$\Phi_{a}$ is full and faithful and
admits left and right adjoints, and 
the semiorthogonal decomposition
\eqref{eq:outer-semiorthogonal} simplifies to
\begin{equation}
\label{eq:outer-semiorthogonal-2}
\Dpf(X_{r^{-1}}E)=
\langle\Phi_{a} \mid t > a' \geq 0,\ s > a'' \geq 0,\ a = a' + a''\rangle.
\end{equation}

Since $i_0$ was arbitrary with
$r_{i_0}>1$ the above shows: if $a$ and $b$ 
are two multi-indexes
with $r>a \geq 0$
and $r > b \geq 0$ such that a nonzero morphism from an object of the
essential image of $\Phi_a$ to an object of the essential image
of $\Phi_b$ exists, then $a \geq b$. This proves the theorem.
\end{proof}


\section{Differential graded enhancements and geometricity}
\label{sec:diff-grad-enhancements}

Many triangulated categories are homotopy
categories of certain differential graded (dg) categories.
This observation leads to the notion of a dg enhancement of a
triangulated category. We introduce obvious dg
enhancements of the derived categories considered in this
article and explain how to lift
certain derived functors to dg functors between these
enhancements. 
We then recall Orlov's notion of a
geometric dg category and state his main glueing result.

We assume that the reader has some familiarity with 
differential graded categories, see for example
\cite{keller2006, toen2008}.
In this section, we will work over a fixed field $\kk$
and assume that all our triangulated categories and
all our dg~categories are $\kk$-linear.

\subsection{DG enhancements}
\label{sec:dg-texorpdfstr-enhan}
We introduce the dg enhancements we will use in the rest of this
article.

The homotopy category of a dg~category $\mathcal{A}$
is denoted by $[\mathcal{A}]$.
Recall that if $\mathcal{A}$ is a \define{pretriangulated}
dg~category, then the homotopy category $[\mathcal{A}]$
has a canonical structure of a triangulated category.

\begin{definition}
  \label{d:enhancement}
  A \define{dg~enhancement} of a triangulated
  category $\mathcal{T}$ is a pair $(\mathcal{E},
  \epsilon)$ consisting of a 
  pretriangulated dg~category $\mathcal{E}$ together with an equivalence
  $\epsilon \colon [\mathcal{E}] \sira \mathcal{T}$ of
  triangulated categories.   
\end{definition}

\begin{example}
  \label{expl:k-ringed-topos-enhancement}
  Let $(X, \mathcal{O})$ be a ringed topos over $\kk$.
  In the dg~category of complexes of
  $\mathcal{O}$-modules, consider the 
  full dg~subcategory $\Ddg(X, \mathcal{O})$ 
  consisting of h-injective complexes of injective
  $\mathcal{O}$-modules.
  This pretriangulated dg~category together
  with the obvious equivalence
  \begin{equation}
    \label{eq:1}
    [\Ddg(X, \mathcal{O})] \sira \D(X, \mathcal{O})
  \end{equation}
  forms a dg~enhancement of $\D(X)$. 
  We chose to work with these
  dg~enhancements in this article.
\end{example}

\begin{remark}
  \label{rem:drinfeld-quotient}
  Another dg enhancement of $\D(X,\mathcal{O})$ is provided by
  the Drinfeld dg quotient of the dg category of complexes of
  $\mathcal{O}$-modules by its full dg subcategory of acyclic
  complexes.
\end{remark}

\begin{remark}
  \label{rem:induced-enhancement}
  If $(\mathcal{E}, \epsilon)$ is a dg enhancement of
  $\mathcal{T}$ then any strict full triangulated subcategory
  $\mathcal{S}$ of $\mathcal{T}$ has an induced dg
  enhancement: just take the full dg subcategory of
  $\mathcal{E}$ of objects that go to objects of
  $\mathcal{S}$ under $\epsilon$, and restrict $\epsilon$
  appropriately. 
\end{remark}

\begin{example}
\label{expl:algebraic-stack-enhancement}
Let $X$ be an algebraic stack over $\kk$ and consider the ringed
topos $(X_\liset, \mathcal{O}_X)$.
The derived category $\D(X) = \D(X_\liset, \mathcal{O}_X)$ has
the dg~enhancement $\Ddg(X) := \Ddg(X_\liset, \mathcal{O}_X)$.
By Remark~\ref{rem:induced-enhancement},
the strict triangulated subcategories 
$\Dqc(X)$ and $\Dpf(X)$ have induced
dg~enhancements which we denote by
$\Ddgqc(X)$ and $\Ddgpf(X)$, respectively. 
\end{example}

\begin{example}
\label{expl:DM-stack-enhancement}
If $X$ is a Deligne--Mumford stack over $\kk$ we could instead consider
the ringed topos $(X_\et, \mathcal{O}_X)$ and define
the dg~enhancements $\Ddg(X_\et)$, $\Ddgqc(X_\et)$
and~$\Ddgpf(X_\et)$ 
for the triangulated categories
$\D(X_\et)$, $\Dqc(X_\et)$ and~$\Dpf(X_\et)$
in a similar way as in the previous example.
\end{example}

\begin{remark}
  \label{rem:DM-compare-liset-small-etale-dg}
  Let $X$ be a Deligne--Mumford stack.
As described in Remark~\ref{rem:DM-compare-liset-small-etale},
the morphism \eqref{eq:7} of ringed topoi
induces a triangulated equivalence
  $\epsilon_\ast \colon \D_\qc(X_\liset) \sira
  \D_\qc(X_\et)$.
Since $\epsilon^*$ is exact,
the functor  
  $\epsilon_* \colon
  \Mod(X_\liset, \mathcal{O}) \ra 
  \Mod(X_\et, \mathcal{O})$ 
  preserves injectives and h-injective complexes.
  Therefore we obtain quasi-equivalences 
  $\Ddgqc(X_\liset) \ra \Ddgqc(X_\et)$
  and
  $\Ddgpf(X_\liset) \ra \Ddgpf(X_\et)$
  lifting the equivalences
  $\Dqc(X_\liset) \sira \Dqc(X_\et)$
  and
  $\Dpf(X_\liset) \sira \Dpf(X_\et)$
  to dg~enhancements.
\end{remark}

\subsection{Uniqueness of dg enhancements}
\label{sec:uniq-dg-enhanc}

We would like to point out that the derived categories we are
mainly interested in have unique dg enhancements in the sense of
the following definition.

\begin{definition}
  [{cf.\ \cite{lunts-orlov-enhancement,
      canonaco-stellari-uniqueness-of-dg-enhancements}}] 
  \label{d:unique-enhancement}
  We say that a triangulated category $\mathcal{T}$
  \define{has a unique dg enhancement} if it has a dg
  enhancement and given any two dg enhancements 
  $(\mathcal{E}, \epsilon)$ 
  and 
  $(\mathcal{E}', \epsilon')$ 
  of $\mathcal{T}$, the dg~categories
$\mathcal{E}$ and $\mathcal{E}'$ are
quasi-equivalent. That is, they are connected by a zig-zag of
quasi-equivalences.
\end{definition}

\begin{remark}
\label{rem:Dpf-unique-enhancemenet}
By \cite[Proposition~6.10]{canonaco-stellari-uniqueness-of-dg-enhancements},
the derived category $\Dpf(X)$ for any separated, tame
algebraic stack $X$ which is smooth and of finite type over $\kk$ has a unique
dg enhancement.
In particular, this includes the stacks considered in the main theorems of this
article.
Indeed, Proposition~6.10 from \emph{loc.\ cit.}\ applies
since every coherent $\mathcal{O}_X$-module on $X$ is perfect by
the assumption that $X$ is regular (cf.\
Proposition~\ref{p:regular-DbCoh=Dpf}).
Furthermore, the category $\Qcoh(X)$ is generated by a set of
coherent $\mathcal{O}_X$-modules
as a Grothendieck category since every quasi-coherent
$\mathcal{O}_X$-module is the filtered colimit of its coherent submodules~\cite[Proposition~15.4]{lmb2000}.
\end{remark}

\subsection{Lifts of some derived functors to dg enhancements}
\label{sec:lifts-deriv-funct}

We need to lift some derived functors to the level of dg
enhancements.
Since our main results concern algebraic stacks over a
field we chose to use the methods of
\cite{olaf-six-operations-on-dg-enhancements-v1}.
We briefly recall the results we need.

If $(X, \mathcal{O})$ is a ringed topos over the field $\kk$,
we have replacement dg functors $\ii$ and $\ee$ on
the dg category of complexes of $\mathcal{O}$-modules.
The functor $\ii$ replaces a complex with a quasi-isomorphic
h-injective complex of injective $\mathcal{O}$-modules,
and $\ee$ replaces a complex with an h-flat complex of
flat $\mathcal{O}$-modules (\cite[Theorem~4.17]{olaf-six-operations-on-dg-enhancements-v1}).

Let $f \colon (X, \mathcal{O}) \ra (Y, \mathcal{O}')$ be a
morphism of ringed topoi over $\kk$.
Then the dg~functors
\begin{align*}
  \ulu{f} := \ii f^\ast\ee \colon & \Ddg(Y) \ra \Ddg(X),\\
  \ull{f} := \ii f_\ast \colon & \Ddg(X) \ra \Ddg(Y)  
\end{align*}
make the diagrams
\begin{equation}
  \label{eq:2}
  \xymatrix{
    [\Ddg(Y)] 
    \ar[rr]^-{[\ulu{f}]} 
    \ar[d]^-{\sim} 
    &&
    {[\Ddg(X)]} 
    \ar[d]^-{\sim} 
    \\
    {\D(Y)} \ar[rr]^-{\L f^*}
    &&
    {\D(X),}
  }
  \quad \quad \quad
  \xymatrix{
    {[\Ddg(X)]}
    \ar[rr]^-{[\ull{f}]}
    \ar[d]^-{\sim}
    &&
    {[\Ddg(Y)]}
    \ar[d]^-{\sim}
    \\
    {\D(X)} 
    \ar[rr]^-{\R f_*} 
    &&
    {\D(Y)} 
  }
\end{equation}
commutative up to isomorphisms of triangulated functors, by
\cite[Proposition~6.5]{olaf-six-operations-on-dg-enhancements-v1}.
The vertical arrows in these diagrams are given by the functor
\eqref{eq:1}. Here we abbreviate $\D(X)=\D(X,\mathcal{O})$ and
$\Ddg(X)=\Ddg(X, \mathcal{O})$ to ease the notation, and similarly for $Y$.

Similarly, if $E \in \D(X)$ is any object, the dg~functor 
\begin{equation*}
  (E \ul{\otimes} -) := \ii(E \otimes \ee(-)) \colon \Ddg(X) \ra
  \Ddg(X) 
\end{equation*}
makes the diagram
\begin{equation}
  \label{eq:3}
  \xymatrix{
    [\Ddg(X)] 
    \ar[rr]^-{[E \ul{\otimes} -]} 
    \ar[d]^-{\sim} 
    &&
    {[\Ddg(X)]} 
    \ar[d]^-{\sim} 
    \\
    {\D(X)} \ar[rr]^-{E \otimes^\L -}
    &&
    {\D(X)}
  }
\end{equation}
commutative up to an isomorphism of triangulated functors;
this follows immediately from
\cite[Section~6.3]{olaf-six-operations-on-dg-enhancements-v1}.

These three diagrams say that the dg functors
$\ulu{f}$, $\ull{f}$ and~$(E \ul{\otimes} -)$ lift the triangulated
functors $\L f^*$, $\R f_*$ and~$(E \otimes^\L-)$ to dg~enhancements.

\begin{remark}
  \label{rem:dg-lifts-subcategories}
  In the above situation assume that $\D_\diamond(X)$ and
  $\D_\diamond(Y)$ are strict 
  triangulated subcategories of $\D(X)$ and $\D(Y)$,
  respectively. 
  By 
  Remark~\ref{rem:induced-enhancement},
  these subcategories have induced dg enhancements
  $\Ddgdiamond(X)$ and $\Ddgdiamond(Y)$.
  If 
  $\L f^* \colon \D(Y) \ra \D(X)$ maps 
  $\D_\diamond(Y)$ to  $\D_\diamond(X)$, then $\ulu{f} \colon
  \Ddg(Y) \ra \Ddg(X)$ maps $\Ddgdiamond(Y)$ to
  $\Ddgdiamond(X)$, and the induced dg functor
  $\ulu{f} \colon \Ddgdiamond(Y) \ra \Ddgdiamond(X)$
  lifts the induced triangulated functor 
  $\L f^* \colon \D_\diamond(Y) \ra \D_\diamond(X)$: 
  diagram \eqref{eq:2} restricts to 
  \begin{equation}
    \label{eq:4}
    \xymatrix{
      [\Ddgdiamond(Y)] 
      \ar[rr]^-{[\ulu{f}]} 
      \ar[d]^-{\sim} 
      &&
      {[\Ddgdiamond(X)]} 
      \ar[d]^-{\sim} 
      \\
      {\D_\diamond(Y)} \ar[rr]^-{\L f^*}
      &&
      {\D_\diamond(X).}
    }
  \end{equation}
  Similar remarks apply to the functors $\R f_*$ and $(E
  \otimes^\L -)$. 
\end{remark}

\begin{example}
  \label{expl:DM-stack-dg-lifts}
  Let $f \colon X \ra Y$ be a concentrated morphism of
  Deligne--Mumford stacks over the field $\kk$.
  As stated in Remark~\ref{rem:left-derived-et},
  we get an induced morphism of ringed topoi.
  Hence Remark~\ref{rem:dg-lifts-subcategories} applies
  and the functors $\L f^*$ and $\R f_*$ lift to dg~functors 
  $\ulu{f} \colon \Ddgqc(Y_\et) \ra \Ddgqc(X_\et)$ and 
  $\ull{f} \colon \Ddgqc(X_\et) \ra \Ddgqc(Y_\et)$
  between the dg enhancements of
  Example~\ref{expl:DM-stack-enhancement}.
  If $E \in \Dqc(X_\et)$ then 
  $(E \otimes^\L -) \colon \Dqc(X_\et) \ra \Dqc(X_\et)$ lifts to
  a dg functor $(E \ul{\otimes} -) \colon \Ddgqc(X_\et) \ra
  \Ddgqc(X_\et)$. 
\end{example}

\begin{example}
\label{expl:alg-stack-dg-lifts}
Let $f \colon X \ra Y$ be a concentrated morphism of arbitrary
algebraic stacks over the field $\kk$.
Then the functors
\begin{equation}
\label{eq:derived-push-pull}
\L f^* \colon \Dqc(Y) \rightleftarrows \Dqc(X) \colon \R f_*
\end{equation}
lift to dg~functors 
\begin{equation}
\label{eq:dg-lift-push-pull}
\ulu{f} \colon \Ddgqc(Y) \rightleftarrows \Ddgqc(X) \colon \ull{f}
\end{equation}
between the dg~enhancements of
Example~\ref{expl:algebraic-stack-enhancement} as we explain below.
Given a complex $E \in \Dqc(X)$,
we also get a lift of the triangulated functor 
$(E \otimes^\L -) \colon \Dqc(X) \ra \Dqc(X)$
to a dg functor $(E \ul{\otimes} -) \colon \Ddgqc(X) \ra \Ddgqc(X)$.
Moreover, if the functors $\L f^\ast$, $\R f_\ast$ and $(E \otimes^\L -)$
restrict to the triangulated categories $\Dpf(X)$ and~$\Dpf(Y)$,
then the lifts $\ulu{f}$, $\ull{f}$, $(E \ul{\otimes} -)$ restrict to the
dg~categories $\Ddgpf(X)$ and~$\Ddgpf(Y)$.

Due to the fact that $f$ does not induce a morphism between the
lisse-étale topoi,
this situation is more complicated than the situation
in Example~\ref{expl:DM-stack-dg-lifts}.
To circumvent the problem one can use the technique of 
cohomological descent from \cite{olsson2007} and~\cite{lo2008}.  
Choose smooth hyper-coverings $\pi_X\colon X_\bullet \to X$
and $\pi_Y\colon Y_\bullet \to Y$ 
together with a morphism $X_\bullet \ra Y_\bullet$ over $f \colon
X \ra Y$. Passing to the associated 
strictly simplicial
algebraic spaces
we obtain 
a morphism
$\widetilde{f}\colon X_\bullet^+ \to Y_\bullet^+$ 
augmenting $f$.
This gives us a diagram
$$
\xymatrix{
  (X_\liset, \mathcal{O}_X)&
(X_{\bullet, \liset}^+, \mathcal{O}_X) \ar[l]_{\pi_X} \ar[r]^{\epsilon_X}&
(X_{\bullet, \et}^+, \mathcal{O}_X) \ar[d]^{\widetilde{f}}
\\
(Y_\liset, \mathcal{O}_Y) &
(Y_{\bullet, \liset}^+, \mathcal{O}_Y) \ar[l]_{\pi_Y} \ar[r]^{\epsilon_Y}&
(Y_{\bullet, \et}^+, \mathcal{O}_Y)
\\
}
$$
of ringed topoi, where $\epsilon_X$ and $\epsilon_Y$ are restriction
morphism similar to the morphisms 
\eqref{eq:7}
from Remark~\ref{rem:DM-compare-liset-small-etale}.
The dg~functor $\ulu{f} \colon \Ddg(Y) \to \Ddg(X)$ is defined as the
composition
$$
(\ul\pi_X)_\ast
\ul\epsilon_X^\ast
\ulu{\widetilde{f}}
(\ul \epsilon_Y)_\ast
\ul \pi_Y^\ast
$$
and similarly for $\ull{f}$.
Again using Remark~\ref{rem:dg-lifts-subcategories} we see that
the restrictions of $\ulu{f}$ and $\ull{f}$ to
$\Ddgqc(X)$ and $\Ddgqc(Y)$ give the lifts \eqref{eq:dg-lift-push-pull}
of the triangulated functors \eqref{eq:derived-push-pull}
(cf.\ \cite[Example~2.2.5]{lo2008}, \cite[Section~1]{hr2014}).
\end{example}

\begin{remark}
As we have seen in Example~\ref{expl:alg-stack-dg-lifts},
the triangulated functors $\L f^\ast$, $\R f_\ast$ and $(E \otimes^\L -)$
lift to dg~functors $\ulu{f}$, $\ull{f}$ and $(E \ul\otimes -)$
when working over a field.
Over an arbitrary base ring $R$, dg $R$-linear (or even additive)
replacement
functors similar to 
$\ee$ and $\ii$ need not exist 
(see \cite[Lemma~4.4]{olaf-six-operations-on-dg-enhancements-v1}).
However, it is presumably possible to define morphisms in the 
homotopy category of $R$-linear dg~categories (where the
quasi-equivalences are inverted)
which lift these functors when considered as
morphisms in the homotopy category of triangulated categories
(where equivalences are inverted).
\end{remark}

\subsection{Geometric dg~categories}
\label{sec:nonc-schem}

After recalling some standard notions for dg categories we
discuss geometric dg categories.
Then we state Orlov's gluing result as 
Theorem~\ref{thm:orlov-with-converse}.
We keep the assumption that $\kk$ is a field.

\begin{definition}
  [{cf.\ \cite[Definition~2.4]{toen-vaquie-moduli-objects-dg-cats},
  \cite[Definition~2.3]{toen-finitude-homotopique-propre-lisse},
  \cite[Sections~2.2, 2.5]{valery-olaf-matrix-factorizations-and-motivic-measures}}]
  \label{d:loc-perf-cpt-gen-proper-smooth-(triang-)sat(-fin-type)}
  Let $\mathcal{A}$ be a $\kk$-linear dg category.
  \begin{enumerate}
  \item 
    $\mathcal{A}$ is \define{triangulated} if it is
    pretriangulated and the triangulated category $[\mathcal{A}]$
    is idempotent 
    complete.
  \item
    \label{enum:loc-cohom-bd}
    $\mathcal{A}$ is \define{locally cohomologically bounded}
    if $\mathcal{A}(A,B)$ is cohomologically bounded for all $A$,
    $B \in
    \mathcal{A}$.
  \item
    $\mathcal{A}$ is \define{locally perfect}
    if $\mathcal{A}(A,B)$ is a perfect complex of $\kk$-vector
    spaces, for all $A$, $B \in \mathcal{A}$.
    That is, all complexes $\mathcal{A}(A,B)$ have bounded and
    finite dimensional cohomology.
  \item
    $\mathcal{A}$ \define{has a compact generator} if its
    derived category $\D(\mathcal{A})$ of dg $\mathcal{A}$-modules
    has a compact generator.
  \item
    $\mathcal{A}$ is \define{proper} if it is locally
    perfect and has a compact generator.
  \item
    \label{enum:smooth}
    $\mathcal{A}$ is \define{smooth} if $\mathcal{A}$ is compact
    as an object of the derived category of dg
    $\mathcal{A} \otimes \mathcal{A}^\opp$-modules. 
  \item
    $\mathcal{A}$ is \define{saturated} if it is triangulated,
    smooth and proper.
  \end{enumerate}
\end{definition}

We say that two dg categories are quasi-equivalent if they are
connected by a zig-zag of quasi-equivalences. The above notions
are all well-defined on quasi-equivalence classes of dg
categories (cf.\
\cite[Lemma~2.12]{valery-olaf-matrix-factorizations-and-motivic-measures}). 
In fact, 
properties~\ref{enum:loc-cohom-bd}--\ref{enum:smooth}
are well-defined on Morita equivalence classes of dg categories
(cf.\
\cite[Lemma~2.13]{valery-olaf-matrix-factorizations-and-motivic-measures}).

\begin{remark}
\label{rem:compare-to-orlov}
Orlov's definition of a \define{(derived) non-commutative scheme}
(\cite[Definition~3.3]{orlov2014}) can be reformulated
using the terms above.
A non-commutative scheme is precisely
a locally cohomologically bounded,
triangulated dg~category with a compact generator.

Indeed, a triangulated dg~category $\mathcal{A}$ has a compact generator
if and only if it
is quasi-equivalent to a dg~category of perfect
dg $A$-modules for some dg~algebra $A$
(\cite[Definition~2.2, Lemma~2.3, Corollary~2.4,
Proposition~2.16]{valery-olaf-matrix-factorizations-and-motivic-measures}). In
this case, $\mathcal{A}$ is locally cohomologically bounded
if and only if $A$ is (locally) cohomologically bounded
(the property of being ''locally cohomologically bounded'' can be
added to the list in \cite[Lemma~2.13]{valery-olaf-matrix-factorizations-and-motivic-measures}.
\end{remark}

\begin{definition}
  [{cf.\ \cite[Definition~4.3]{orlov2014}}]
  \label{d:geometric-NC-scheme}
  A dg category $\mathcal{A}$ is \define{geometric}
  if there exists a smooth projective scheme $X$ over $\kk$
  and an admissible subcategory $\mathcal{S}$ of $\Dpf(X)$ such
  that  
  $\mathcal{A}$ and the full dg~subcategory of $\Ddgpf(X)$
  consisting of objects of $\mathcal{S}$ are quasi-equivalent.
\end{definition}

\begin{remark}
\label{rem:geometric-implies-saturated}
Geometric dg~categories are saturated.
Indeed, if $X$ is any
scheme, then
$\Dpf(X)$ is idempotent complete
\cite[\sptag{08GA}]{stacks-project}, and so is any admissible
subcategory. This shows that a geometric dg category is
triangulated.
If $X$ is smooth and proper over the field $\kk$, then
$\Ddgpf(X)$ is smooth and proper,
by \cite[Theorem~1.2 and~1.4]{valery-olaf-new-enhancements} or
\cite[Proposition~3.31]{orlov2014}. Moreover, smoothness and
properness are inherited to dg subcategories of $\Ddgpf(X)$ enhancing admissible
subcategories of $\Dpf(X)$, by
\cite[Proposition~2.20]{valery-olaf-matrix-factorizations-and-motivic-measures}.

This shows, together with 
Remark~\ref{rem:compare-to-orlov},
that our geometric dg categories coincide
with Orlov's geometric noncommutative schemes as defined in
\cite[Definition~4.3]{orlov2014}.
\end{remark}

\begin{example}
Not all geometric dg~categories are of the form $\Ddgpf(X)$ for
some smooth projective variety $X$ over $\kk$.
Let $\Lambda = \kk[\bullet \to \bullet]$ be the path algebra
of a Dynkin quiver of type $A_2$.
Then the standard enhancement of the bounded derived category
$\D^\mathrm{b}(\Lambda)$ is geometric by \cite[Corollary~5.4]{orlov2014}.

On the other hand, the third power of the Serre functor on 
$\D^\mathrm{b}(\Lambda)$ is isomorphic to the shift $[1]$
(cf.~\cite[Proposition~3.1]{hi2011}).
Hence $\D^\mathrm{b}(\Lambda)$ cannot be equivalent to
$\Dpf(X)$ for any smooth projective variety $X$ over $\kk$.
\end{example}

\begin{lemma}
  \label{l:admissible-dg-subcats-of-geometric}
  Let $\mathcal{B}$ be a dg subcategory of a
  geometric 
  dg 
  category $\mathcal{A}$ such that
  $[\mathcal{B}]$ is an admissible
  subcategory of the 
  triangulated category $[\mathcal{A}]$. 
  Then
  $\mathcal{B}$ is geometric.
\end{lemma}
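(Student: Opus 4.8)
The plan is to chase through Orlov's gluing machinery so that "admissible in a geometric category" reduces to "admissible in $\Dpf(Y)$ for a smooth projective $Y$". First I would spell out what the hypothesis gives us: since $\mathcal{A}$ is geometric, there is a smooth projective scheme $X$ over $\kk$ and an admissible subcategory $\mathcal{S} \subseteq \Dpf(X)$ such that $\mathcal{A}$ is quasi-equivalent to the full dg subcategory $\mathcal{A}_\mathcal{S}$ of $\Ddgpf(X)$ on the objects of $\mathcal{S}$. A quasi-equivalence $\mathcal{A} \simeq \mathcal{A}_\mathcal{S}$ induces a triangulated equivalence $[\mathcal{A}] \simeq \mathcal{S}$, under which $[\mathcal{B}]$ corresponds to a strict full triangulated subcategory $\mathcal{S}' \subseteq \mathcal{S}$ which is admissible \emph{in} $\mathcal{S}$ (admissibility is transported along triangulated equivalences, as the inclusion and its adjoints transport). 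Moreover, transporting the dg enhancement, $\mathcal{B}$ is quasi-equivalent to the full dg subcategory of $\Ddgpf(X)$ on the objects of $\mathcal{S}'$ (use Remark~\ref{rem:induced-enhancement}).

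Next I would invoke transitivity of admissibility: if $\mathcal{S}' \subseteq \mathcal{S} \subseteq \Dpf(X)$ with $\mathcal{S}'$ admissible in $\mathcal{S}$ and $\mathcal{S}$ admissible in $\Dpf(X)$, then $\mathcal{S}'$ is admissible in $\Dpf(X)$. This is the standard fact that a composite of inclusions each having a left (resp. right) adjoint again has a left (resp. right) adjoint; it is precisely the kind of statement collected in the references on semiorthogonal decompositions cited before Definition (Bondal--Kapranov, and \cite[Appendix~A]{valery-olaf-matfak-semi-orth-decomp}). So $\mathcal{S}'$ is an admissible subcategory of $\Dpf(X)$ for the smooth projective scheme $X$, and the full dg subcategory of $\Ddgpf(X)$ on $\mathcal{S}'$ is geometric essentially by Definition~\ref{d:geometric-NC-scheme}. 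Since $\mathcal{B}$ is quasi-equivalent to that dg category, and geometricity is invariant under quasi-equivalence (it is defined up to quasi-equivalence), $\mathcal{B}$ is geometric.

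The only genuinely delicate point — and the step I expect to need the most care — is the bookkeeping that an abstract quasi-equivalence $\mathcal{A} \simeq \mathcal{A}_\mathcal{S}$ together with the hypothesis that $[\mathcal{B}]$ is admissible in $[\mathcal{A}]$ really does produce a \emph{strict full triangulated} subcategory $\mathcal{S}' \subseteq \Dpf(X)$ admissible there, \emph{with} $\mathcal{B}$ quasi-equivalent to its induced enhancement. Concretely: the equivalence $[\mathcal{A}_\mathcal{S}] \simeq \mathcal{S}$ comes from restricting \eqref{eq:1}, and one must check that composing the quasi-equivalences carries $\mathcal{B}$ to the dg subcategory of $\Ddgpf(X)$ on the essential image of $[\mathcal{B}]$; replacing that essential image by the strictly full subcategory it generates changes neither the triangulated equivalence class nor (by Remark~\ref{rem:induced-enhancement}) the quasi-equivalence class of the enhancement. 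Everything else is formal transport of structure along quasi-equivalences plus transitivity of adjoints.
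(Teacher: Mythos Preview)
Your proposal is correct and follows essentially the same approach as the paper's proof: transport $\mathcal{B}$ along the quasi-equivalence(s) witnessing geometricity of $\mathcal{A}$ to obtain a dg subcategory $\mathcal{F}$ of $\Ddgpf(X)$ whose homotopy category is admissible in the admissible subcategory $\mathcal{S}=[\mathcal{E}]$, then invoke transitivity of admissibility. The only point worth tightening is that the definition of geometric gives a \emph{zig-zag} of quasi-equivalences rather than a single one, so the transport of $\mathcal{B}$ should be carried out along the whole zig-zag (as the paper does explicitly); your final paragraph essentially acknowledges this.
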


We will see in Corollary~\ref{c:right-admissible-dg-subcats-of-geometric}
below that it is enough to assume that 
$[\mathcal{B}]$ is right or left admissible
in $[\mathcal{A}]$. 

\begin{proof}
  Let $X$ be a smooth projective scheme and  
  $\mathcal{E}$ a dg subcategory of $\Ddgpf(X)$
  such that $[\mathcal{E}]$ is an admissible subcategory of
  $[\Ddgpf(X)]$ and there is a zig-zag of
  quasi-equivalences connecting $\mathcal{A}$ and
  $\mathcal{E}$. 
  Transfering $\mathcal{B}$ along such a zig-zag yields
  a zig-zag of quasi-equivalences connecting $\mathcal{B}$
  with 
  a dg subcategory $\mathcal{F}$ of
  $\mathcal{E}$ such that 
  $[\mathcal{F}]$ is an admissible subcategory of 
  $[\mathcal{E}]$. But then 
  $[\mathcal{F}]$ is also admissible in $[\Ddgpf(X)]$.
  Hence $\mathcal{B}$ is geometric.
\end{proof}

\begin{proposition}
  \label{p:right-admissible-dg-subcats-of-saturated}
  Let $\mathcal{B}$ be a dg subcategory of a
  saturated dg 
  category $\mathcal{A}$ such that
  $[\mathcal{B}]$ is a right (resp.\ left) admissible
  subcategory of the 
  triangulated category $[\mathcal{A}]$. 
  Then $[\mathcal{B}]$
  is admissible in  
  $[\mathcal{A}]$ and 
  $\mathcal{B}$ is saturated.
\end{proposition}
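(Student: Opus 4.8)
The plan is to reduce the left-admissible case to the right-admissible case by passing to opposite categories, then to show that a right-admissible subcategory of a saturated triangulated category is automatically left-admissible, so that Lemma~\ref{l:admissible-dg-subcats-of-geometric}-type inheritance arguments apply. First I would recall the standard fact about triangulated categories: if $\mathcal{T}$ is a triangulated category with a Serre functor and $\mathcal{S} \subseteq \mathcal{T}$ is right admissible, then $\mathcal{S}$ is also left admissible. Concretely, if $i\colon \mathcal{S} \to \mathcal{T}$ has a right adjoint $i^!$, then the Serre functor $S_\mathcal{T}$ of $\mathcal{T}$ and the Serre functor $S_\mathcal{S}$ of $\mathcal{S}$ can be used to produce a left adjoint: $i^* = S_\mathcal{S} \circ i^! \circ S_\mathcal{T}^{-1}$. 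The point is that a saturated dg category $\mathcal{A}$ has a triangulated category $[\mathcal{A}]$ which admits a Serre functor, since smooth and proper dg categories have Serre-dual Hom-spaces; this is where the properness and smoothness of $\mathcal{A}$ enter. Moreover, a right-admissible (equivalently here, admissible) subcategory $[\mathcal{B}] \subseteq [\mathcal{A}]$ of a category with a Serre functor inherits a Serre functor, so $[\mathcal{B}]$ is again smooth and proper as an \emph{abstract} triangulated category — but we need the dg enhancement $\mathcal{B}$ itself to be saturated.

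For the dg-level statement, the second main step is to invoke that smoothness and properness are inherited by dg subcategories enhancing admissible subcategories: this is exactly \cite[Proposition~2.20]{valery-olaf-matrix-factorizations-and-motivic-measures}, which was already cited in Remark~\ref{rem:geometric-implies-saturated}. Once we know $[\mathcal{B}]$ is admissible in $[\mathcal{A}]$, that proposition gives that $\mathcal{B}$ is smooth and proper. Triangulatedness of $\mathcal{B}$ follows because $[\mathcal{B}]$, being an admissible subcategory of the idempotent-complete category $[\mathcal{A}]$ (idempotent complete since $\mathcal{A}$ is triangulated in the sense of Definition~\ref{d:loc-perf-cpt-gen-proper-smooth-(triang-)sat(-fin-type)}), is itself idempotent complete: an admissible subcategory of an idempotent-complete triangulated category is closed under direct summands. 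Combining: $\mathcal{B}$ is triangulated, smooth, and proper, hence saturated.

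The left-admissible case is handled by the opposite-category trick: $\mathcal{A}^{\mathrm{op}}$ is again saturated (smoothness, properness, and triangulatedness are all stable under $(-)^{\mathrm{op}}$), and $[\mathcal{B}^{\mathrm{op}}] = [\mathcal{B}]^{\mathrm{op}}$ is right admissible in $[\mathcal{A}]^{\mathrm{op}} = [\mathcal{A}^{\mathrm{op}}]$ precisely when $[\mathcal{B}]$ is left admissible in $[\mathcal{A}]$. Applying the right-admissible case to $\mathcal{B}^{\mathrm{op}} \subseteq \mathcal{A}^{\mathrm{op}}$ shows $\mathcal{B}^{\mathrm{op}}$ is saturated, hence so is $\mathcal{B}$; and $[\mathcal{B}]^{\mathrm{op}}$ admissible forces $[\mathcal{B}]$ admissible.

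\textbf{Main obstacle.} The delicate point is establishing cleanly that a right-admissible subcategory of a \emph{saturated} triangulated category is automatically admissible — i.e.\ that right admissibility upgrades to two-sided admissibility. The honest route is via the Serre functor: one must verify that $[\mathcal{A}]$ has a Serre functor (this uses properness to get finite-dimensional Homs and smoothness to get the Serre duality isomorphism, following \cite{orlov2014} or \cite{valery-olaf-matrix-factorizations-and-motivic-measures}), and then that a right-admissible subcategory of a category with Serre functor is left admissible via the twist $i^* \cong S_{[\mathcal{B}]} \circ i^! \circ S_{[\mathcal{A}]}^{-1}$, which requires knowing $[\mathcal{B}]$ itself carries a Serre functor — standard, but worth stating carefully. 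Everything downstream (idempotent completeness of $[\mathcal{B}]$, inheritance of smooth/proper to $\mathcal{B}$) is then a direct citation of results already recalled in the paper.
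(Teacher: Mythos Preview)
Your proposal is correct and essentially reconstructs the content behind the paper's own proof, which is a one-line citation: the paper simply observes the semiorthogonal decomposition $[\mathcal{A}]=\langle [\mathcal{B}]^\perp, [\mathcal{B}] \rangle$ (resp.\ $\langle [\mathcal{B}], {}^\perp[\mathcal{B}] \rangle$) and defers everything to the proof of \cite[Proposition~2.26]{valery-olaf-matrix-factorizations-and-motivic-measures}. Your Serre-functor argument for upgrading one-sided admissibility to two-sided, followed by the inheritance of smoothness and properness via \cite[Proposition~2.20]{valery-olaf-matrix-factorizations-and-motivic-measures}, is precisely the standard route and is what that cited proof unpacks to.

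One small simplification for the point you flag as the ``main obstacle'': you can sidestep the apparent circularity (needing $S_{[\mathcal{B}]}$ to build the left adjoint) by arguing on the level of semiorthogonal decompositions rather than adjoints. If $[\mathcal{A}]$ has a Serre functor $S$ and $[\mathcal{A}]=\langle [\mathcal{B}]^\perp, [\mathcal{B}]\rangle$, then $S$ carries $[\mathcal{B}]^\perp$ to ${}^\perp[\mathcal{B}]$, giving a new semiorthogonal decomposition $[\mathcal{A}]=\langle [\mathcal{B}], S([\mathcal{B}]^\perp)\rangle$; this exhibits $[\mathcal{B}]$ as left admissible without ever invoking a Serre functor on $[\mathcal{B}]$ itself. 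Your formula $i^*\cong S_{[\mathcal{B}]}\circ i^!\circ S_{[\mathcal{A}]}^{-1}$ is also fine once one knows that right-admissible subcategories of Hom-finite categories with Serre functor inherit a Serre functor (Bondal--Kapranov), but the SOD version is slightly cleaner. The opposite-category reduction for the left-admissible case is unnecessary once you phrase things this way, since the Serre-functor mutation argument is symmetric.
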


\begin{proof}
  We have a semiorthogonal decomposition 
  $[\mathcal{A}]=\langle [\mathcal{B}]^\perp,
  [\mathcal{B}] \rangle$
  (resp.\ 
  $[\mathcal{A}]=\langle [\mathcal{B}],
  \leftidx{^\perp}{[\mathcal{B}]}{} \rangle$).
  So our claim follows from the proof of
  \cite[Proposition~2.26]{valery-olaf-matrix-factorizations-and-motivic-measures}.
\end{proof}

\begin{corollary}
  \label{c:right-admissible-dg-subcats-of-geometric}
  Let $\mathcal{B}$ be a dg subcategory of a
  geometric 
  dg 
  category $\mathcal{A}$ such that
  $[\mathcal{B}]$ is a right (resp.\ left) admissible
  subcategory of the 
  triangulated category $[\mathcal{A}]$. 
  Then $[\mathcal{B}]$ is admissible in $[\mathcal{A}]$
  and 
  $\mathcal{B}$ is geometric.
\end{corollary}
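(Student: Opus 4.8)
The plan is to combine the three results that immediately precede the statement. First I would upgrade the hypothesis on $\mathcal{A}$: by Remark~\ref{rem:geometric-implies-saturated}, a geometric dg category is saturated, so $\mathcal{A}$ is saturated. This places us exactly in the situation of Proposition~\ref{p:right-admissible-dg-subcats-of-saturated}, with $[\mathcal{B}]$ a right (resp.\ left) admissible subcategory of the saturated dg category $\mathcal{A}$. That proposition then yields that $[\mathcal{B}]$ is in fact admissible (that is, both left and right admissible) in $[\mathcal{A}]$; note that it also gives that $\mathcal{B}$ is saturated, although for the present corollary we will extract more.

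Once two-sided admissibility of $[\mathcal{B}]$ in $[\mathcal{A}]$ has been established, I would invoke Lemma~\ref{l:admissible-dg-subcats-of-geometric} directly: $\mathcal{B}$ is a dg subcategory of the geometric dg category $\mathcal{A}$ and $[\mathcal{B}]$ is admissible in $[\mathcal{A}]$, hence $\mathcal{B}$ is geometric. This completes the proof, and the two assertions of the corollary (admissibility of $[\mathcal{B}]$ and geometricity of $\mathcal{B}$) are exactly the outputs of these two invocations.

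I do not expect any genuine obstacle: the corollary is a formal consequence of Proposition~\ref{p:right-admissible-dg-subcats-of-saturated} and Lemma~\ref{l:admissible-dg-subcats-of-geometric}, with Remark~\ref{rem:geometric-implies-saturated} supplying the only bridge, namely that ``geometric'' implies ``saturated'' so that the proposition is applicable. The sole conceptual content is that Proposition~\ref{p:right-admissible-dg-subcats-of-saturated} is precisely what removes the asymmetry between ``right (resp.\ left) admissible'' and ``admissible'' in the hypothesis of Lemma~\ref{l:admissible-dg-subcats-of-geometric}, which is the reason this corollary can be stated at all.
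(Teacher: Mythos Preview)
Your argument is correct and follows the paper's proof exactly: invoke Remark~\ref{rem:geometric-implies-saturated} to see that $\mathcal{A}$ is saturated, apply Proposition~\ref{p:right-admissible-dg-subcats-of-saturated} to upgrade one-sided admissibility to two-sided admissibility, and then conclude geometricity via Lemma~\ref{l:admissible-dg-subcats-of-geometric}.
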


\begin{proof}
  Since $\mathcal{A}$ is
  saturated, by 
  Remark~\ref{rem:geometric-implies-saturated}, this
  follows from
  Proposition~\ref{p:right-admissible-dg-subcats-of-saturated}
  and 
  Lemma~\ref{l:admissible-dg-subcats-of-geometric}.
\end{proof}

We reformulate Orlov's result 
\cite[Theorem~4.15]{orlov2014} 
that
the gluing of geometric dg categories is again geometric;
for completeness we add the implication in the other direction.

\begin{theorem}
  [{cf.~\cite[Theorem~4.15]{orlov2014}}]
  \label{thm:orlov-with-converse}
  Let $\mathcal{A}$ be a 
  pretriangulated 
  dg~category 
  with full dg subcategories $\mathcal{B}_1, \dots,
  \mathcal{B}_n$ such that 
  $[\mathcal{A}]=\langle [\mathcal{B}_1], \dots,
  [\mathcal{B}_n]\rangle$ is a 
  semiorthogonal decomposition. 
  Then $\mathcal{A}$ is geometric if and only if 
  $\mathcal{A}$
  is locally perfect 
  and 
  all dg
  categories    
  $\mathcal{B}_1, \dots,
  \mathcal{B}_n$ are geometric.

  Moreover, if these conditions are satisfied then 
  all
  $[\mathcal{B}_i]$ are admissible in $[\mathcal{A}]$.
\end{theorem}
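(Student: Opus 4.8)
The plan is to induct on $n$, peeling off the leftmost piece $[\mathcal{B}_1]$ at each step, and to reduce the two non-formal inputs to Orlov's gluing theorem \cite[Theorem~4.15]{orlov2014} (for geometricity of the glued category) and to Corollary~\ref{c:right-admissible-dg-subcats-of-geometric} (for geometricity of the pieces and for admissibility). Throughout one uses that a geometric dg category is saturated (Remark~\ref{rem:geometric-implies-saturated}), hence proper, hence locally perfect; in particular the ``locally perfect'' half of the forward implication is automatic, and for $n=1$ there is nothing else to prove. So fix $n\ge 2$, write $[\mathcal{A}]=\langle[\mathcal{B}_1],\mathcal{T}\rangle$ with $\mathcal{T}=\langle[\mathcal{B}_2],\dots,[\mathcal{B}_n]\rangle$, and let $\mathcal{C}\subseteq\mathcal{A}$ be the full dg subcategory on the objects lying in $\mathcal{T}$ (Remark~\ref{rem:induced-enhancement}); then $[\mathcal{C}]=\mathcal{T}$, and $\mathcal{B}_2,\dots,\mathcal{B}_n$ are full dg subcategories of $\mathcal{C}$ realizing a semiorthogonal decomposition of $[\mathcal{C}]$.

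For the implication ``$\Leftarrow$'', assume $\mathcal{A}$ is locally perfect and all $\mathcal{B}_i$ are geometric. Since $\mathcal{C}$ is a full dg subcategory of the locally perfect $\mathcal{A}$ it is locally perfect, so the inductive hypothesis gives that $\mathcal{C}$ is geometric. Thus $[\mathcal{A}]=\langle[\mathcal{B}_1],[\mathcal{C}]\rangle$ is a two-term semiorthogonal decomposition into geometric dg categories; this presents $\mathcal{A}$, up to quasi-equivalence, as the gluing of $\mathcal{B}_1$ and $\mathcal{C}$ along the bimodule formed by the complexes $\mathcal{A}(-,-)$, and local perfectness of $\mathcal{A}$ is exactly the finiteness that makes this gluing bimodule perfect. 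Orlov's gluing theorem \cite[Theorem~4.15]{orlov2014} then yields that $\mathcal{A}$ is geometric. The only genuine obstacle I foresee is this last step: carefully matching ``$\mathcal{A}$ locally perfect'' with the perfectness hypothesis on the gluing data under which \cite[Theorem~4.15]{orlov2014} is stated, and recording the (standard) fact that a pretriangulated dg category equipped with a two-term semiorthogonal decomposition by full dg subcategories is quasi-equivalent to the associated gluing, geometricity being invariant under quasi-equivalence.

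For the implication ``$\Rightarrow$'' and the ``Moreover'' clause, assume $\mathcal{A}$ is geometric; it is then locally perfect, as observed. The semiorthogonal decomposition $[\mathcal{A}]=\langle[\mathcal{B}_1],[\mathcal{C}]\rangle$ shows, by elementary semiorthogonal decomposition theory (\cite[Appendix~A]{valery-olaf-matfak-semi-orth-decomp}), that $[\mathcal{B}_1]$ is left admissible and $[\mathcal{C}]$ is right admissible in $[\mathcal{A}]$. Applying Corollary~\ref{c:right-admissible-dg-subcats-of-geometric} to each, we conclude that $[\mathcal{B}_1]$ and $[\mathcal{C}]$ are admissible in $[\mathcal{A}]$ and that $\mathcal{B}_1$ and $\mathcal{C}$ are geometric. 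Since $[\mathcal{C}]=\langle[\mathcal{B}_2],\dots,[\mathcal{B}_n]\rangle$ is a semiorthogonal decomposition of the geometric dg category $\mathcal{C}$ by full dg subcategories, the inductive hypothesis gives that each $\mathcal{B}_i$ with $i\ge 2$ is geometric and each $[\mathcal{B}_i]$ is admissible in $[\mathcal{C}]$. As the inclusion $[\mathcal{B}_i]\hookrightarrow[\mathcal{A}]$ factors through the admissible subcategory $[\mathcal{C}]$, and composites of left (resp.\ right) adjoints are again left (resp.\ right) adjoints, admissibility is transitive, so each $[\mathcal{B}_i]$ is admissible in $[\mathcal{A}]$; together with $i=1$ this completes the induction and establishes every remaining assertion.
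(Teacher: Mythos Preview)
Your argument is correct and follows essentially the same strategy as the paper: cite Orlov's gluing theorem for ``$\Leftarrow$'' and use Corollary~\ref{c:right-admissible-dg-subcats-of-geometric} together with an induction for ``$\Rightarrow$'' and the admissibility statement. Two small remarks: the paper invokes \cite[Theorem~4.15]{orlov2014} directly for the full $n$-term decomposition rather than inducting down to the two-term case, and the ``obstacle'' you flag is resolved simply by the terminological observation that Orlov's ``proper'' coincides with ``locally perfect'' (cf.\ \cite[Remark~3.15]{orlov2014}).
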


\begin{proof}
  If $\mathcal{A}$ is locally perfect and all dg
  categories $\mathcal{B}_1, \dots \mathcal{B}_n$
  are geometric then $\mathcal{A}$ is geometric by
  \cite[Theorem~4.15]{orlov2014}. (The
  ``proper'' dg categories in
  \cite{orlov2014} are usually called
  locally perfect, cf.\
  \cite[Remark~3.15]{orlov2014}).

  Conversely assume that 
  $\mathcal{A}$ is geometric. Then $\mathcal{A}$ is
  saturated by 
  Remark~\ref{rem:geometric-implies-saturated} and in particular
  locally perfect.
  Corollary~\ref{c:right-admissible-dg-subcats-of-geometric}
  and an easy induction (using
  \cite[Lemma~A.11]{valery-olaf-matfak-semi-orth-decomp}) 
  shows that all $\mathcal{B}_i$ are geometric and that all
  $[\mathcal{B}_i]$ are admissible in $[\mathcal{A}]$.
\end{proof}


\section{Geometricity for dg enhancements of algebraic stacks}
\label{sec:geometry}
In this section, we combine Orlov's result on gluing of geometric
dg categories
with a geometric argument to obtain the results about
geometricity for dg enhancements of algebraic stacks stated in the introduction.
The geometric argument depends on the existence of \define{destackifications}
in the sense of \cite{bergh2014, br2015}.
We start by briefly recalling this notion.

In this section, we will mostly work with tame stacks which are separated
and of finite type over a field $\kk$.
Such a stack will be called an 
\define{orbifold} provided that it is smooth over $\kk$ and contains an
open dense substack which is an algebraic space.

Let $X$ be an orbifold over $\kk$.
Although the stack $X$ is smooth,
the same need not hold for its coarse space $X_\coarse$.
However, it is possible to modify the stack via a sequence
of birational modifications such that the coarse space
of the modified stack becomes smooth.
It suffices to use two kinds of modifications:
blowups in smooth centers and root stacks in smooth divisors.
Collectively, we refer to such modifications as \define{smooth stacky blowups}.

\begin{theorem}[Destackification]
\label{thm:destack}
Let $X$ be a tame, separated algebraic stack which is smooth
and of finite type over a field $\kk$.
Assume that $X$ contains an open dense substack which is
an algebraic space. 
Then there exists a morphism $f\colon Y \to X$,
which is a composition of smooth stacky blowups,
such that $Y_\coarse$ is smooth over $\kk$
and such that $Y \to Y_\coarse$ is an iterated root 
construction in an snc divisor $E$ on $Y_\coarse$.
\end{theorem}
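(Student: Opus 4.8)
The plan is to deduce the statement directly from the destackification theorem of Bergh and Rydh \cite{bergh2014, br2015}, of which it is essentially the precise special case adapted to our needs. First I would check that $X$ is an orbifold in the sense of this section: it is tame, separated and of finite type over $\kk$, it is smooth over $\kk$, and by hypothesis it contains a dense open substack which is an algebraic space. This is exactly the class of stacks to which functorial destackification applies (taking the empty divisor as the initial boundary).

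Next I would invoke the output of the destackification algorithm. Applied to $X$, it produces a morphism $f \colon Y \to X$ which is a composition of smooth stacky blowups — that is, of blowups in smooth closed centres and root stacks in smooth divisors — together with a divisor $E$ on the coarse space $Y_\coarse$, such that $Y_\coarse$ is smooth over $\kk$, the divisor $E$ is snc on $Y_\coarse$, and the canonical morphism $Y \to Y_\coarse$ is the iterated root stack of $Y_\coarse$ along $E$. In the general formulation of destackification the structure morphism to the coarse space may a priori involve a residual gerbe over such an iterated root stack; the point of the orbifold hypothesis — which is inherited by every intermediate stacky blowup, since these modifications are isomorphisms over a dense open — is precisely that this gerbe is trivial, so that $Y \to Y_\coarse$ is an honest iterated root construction in an snc divisor. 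Matching this conclusion with the definitions in force here gives Theorem~\ref{thm:destack}.

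The only point requiring attention is that the class of stacks for which the cited theorem is formulated is stable under the modifications used, so that its hypotheses are met at every stage: a blowup of a smooth algebraic stack in a smooth centre is again smooth, a root stack in a smooth divisor on a smooth tame stack is again smooth and tame by Proposition~\ref{prop:basic-root}, separatedness and finite type over $\kk$ are clearly preserved, and, as noted above, so is the property of containing a dense open algebraic space. I expect no substantial mathematical obstacle; the main (and minor) task is the careful translation between the bookkeeping conventions of \cite{bergh2014, br2015} and the terminology fixed in this paper — in particular, reading off that the resulting root construction is carried out along an snc divisor, which is already part of the data produced by the destackification algorithm.
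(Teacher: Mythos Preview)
Your proposal is correct and takes essentially the same approach as the paper: both simply invoke the destackification theorem of Bergh and Rydh \cite{bergh2014, br2015}, with the paper noting explicitly that the abelian-stabilizer case is \cite[Theorem~1.2]{bergh2014}, the characteristic-zero extension is discussed before \cite[Corollary~1.4]{bergh2014}, and the general case is handled in \cite{br2015}. Your additional remarks on the orbifold hypothesis and preservation under stacky blowups are accurate but go beyond what the paper records.
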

\begin{proof}
The case where $X$ has abelian stabilizers is treated
in \cite[Theorem~1.2]{bergh2014}.
In the discussion before \cite[Corollary~1.4]{bergh2014} it is shown how the
abelian hypothesis can be removed if $\kk$ has characteristic zero.
For $\kk$ of arbitrary characteristic,
the theorem is shown in \cite{br2015}.
\end{proof}

\begin{proposition}
  \label{p:pullback-full-faithful-admissible}
  Let $f \colon X \ra Y$ be a concentrated morphism of algebraic
  stacks over a field $\kk$ such that the natural morphism
  $\mathcal{O}_Y \to \R f_\ast \mathcal{O}_X$ is an isomorphism
  and $\R f_*$ preserves perfect complexes. If the $\kk$-linear
  dg category 
  $\Ddgpf(X)$ is  
  geometric then so is $\Ddgpf(Y)$.
\end{proposition}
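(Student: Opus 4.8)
The plan is to exhibit $\Ddgpf(Y)$ as a dg subcategory of $\Ddgpf(X)$ whose homotopy category is an admissible subcategory, and then invoke Corollary~\ref{c:right-admissible-dg-subcats-of-geometric}. First I would observe that since $f$ is concentrated and $\R f_\ast$ preserves perfect complexes, Lemma~\ref{lem:left-adjoint-la} gives a left adjoint $f_\times$ to $\L f^\ast \colon \Dpf(Y) \to \Dpf(X)$, and $\R f_\ast$ itself is a right adjoint to $\L f^\ast$ on the perfect level. Moreover, the hypothesis $\mathcal{O}_Y \xrightarrow{\sim} \R f_\ast \mathcal{O}_X$ together with Lemma~\ref{lem:left-adjoint-ff} shows that $\L f^\ast \colon \Dqc(Y) \to \Dqc(X)$ is full and faithful; restricting to perfect complexes, $\L f^\ast \colon \Dpf(Y) \to \Dpf(X)$ is full and faithful with both adjoints, so its essential image is an admissible subcategory of $\Dpf(X)$.

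Next I would lift this picture to the dg level. By Example~\ref{expl:alg-stack-dg-lifts}, the triangulated functor $\L f^\ast$ restricts to $\Dpf$ and therefore has a dg lift $\ulu{f} \colon \Ddgpf(Y) \to \Ddgpf(X)$ making the square \eqref{eq:4} commute. Let $\mathcal{F}$ denote the full dg subcategory of $\Ddgpf(X)$ on those objects lying in the essential image of $[\ulu{f}] \cong \L f^\ast$; by Remark~\ref{rem:induced-enhancement} this is the induced dg enhancement of the admissible subcategory $\L f^\ast(\Dpf(Y)) \subseteq \Dpf(X)$. Since $\ulu{f}$ lifts a fully faithful functor with idempotent-complete (indeed admissible) essential image, the induced dg functor $\Ddgpf(Y) \to \mathcal{F}$ is a quasi-equivalence: on homotopy categories it is $\L f^\ast$ corestricted to its essential image, which is an equivalence, and it is quasi-fully-faithful because the hom-complexes compute $\R\Hom$ and $\L f^\ast$ is fully faithful on $\Dpf(Y)$ hence on all shifts. (Strictly, one should check that $\ulu{f}$ is quasi-fully-faithful on $\Ddgpf(Y)$; this follows since $[\ulu{f}] = \L f^\ast$ is fully faithful and the hom-spaces of the pretriangulated dg categories $\Ddgpf$ recover the graded $\Hom$ in the derived category.)

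Finally, since $\Ddgpf(X)$ is geometric by hypothesis and $[\mathcal{F}]$ is an admissible subcategory of $[\Ddgpf(X)] = \Dpf(X)$, Lemma~\ref{l:admissible-dg-subcats-of-geometric} shows $\mathcal{F}$ is geometric, and hence so is the quasi-equivalent dg category $\Ddgpf(Y)$. The main obstacle I anticipate is the second paragraph's claim that $\ulu{f}$ is quasi-fully-faithful and that its homotopy essential image is genuinely the admissible subcategory $\L f^\ast(\Dpf(Y))$ rather than merely a full subcategory on the same objects — i.e.\ verifying carefully that the dg enhancement $\mathcal{F}$ induced on this subcategory agrees with the one transported from $\Ddgpf(Y)$ along $\ulu{f}$. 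Everything else is a formal combination of Lemmas~\ref{lem:left-adjoint-la}, \ref{lem:left-adjoint-ff}, Example~\ref{expl:alg-stack-dg-lifts}, Remark~\ref{rem:dg-lifts-subcategories}, and Lemma~\ref{l:admissible-dg-subcats-of-geometric}.
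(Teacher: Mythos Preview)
Your proposal is correct and follows essentially the same route as the paper: use Lemma~\ref{lem:left-adjoint-ff} for full faithfulness, lift $\L f^\ast$ to $\ulu{f}$ via Example~\ref{expl:alg-stack-dg-lifts}, identify $\Ddgpf(Y)$ with the induced enhancement of the essential image, and conclude via Lemma~\ref{l:admissible-dg-subcats-of-geometric} (or Corollary~\ref{c:right-admissible-dg-subcats-of-geometric}). The obstacle you flag---that $\ulu{f}$ is quasi-fully-faithful---is handled in the paper by citing \cite[Lemma~2.5]{valery-olaf-matrix-factorizations-and-motivic-measures}, which encodes precisely your parenthetical argument that full faithfulness of $[\ulu{f}]$ on a pretriangulated dg category forces quasi-full-faithfulness.
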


\begin{proof}
  Lemma~\ref{lem:left-adjoint-ff}
  shows that
  $\L f^* \colon \Dpf(Y) \ra \Dpf(X)$ is full and
  faithful.
  Example~\ref{expl:alg-stack-dg-lifts}
  and Remark~\ref{rem:dg-lifts-subcategories}
  provide the dg functor 
  $\ulu{f} \colon \Ddgpf(Y) \ra \Ddgpf(X)$ 
  lifting $\L f^*$ to dg enhancements.
  It defines a quasi-equivalence from
  $\Ddgpf(Y)$ to $\mathcal{E}$ where
  $(\mathcal{E},\epsilon)$ is the induced dg enhancement of
  the 
  essential image of 
  $\L f^* \colon \Dpf(Y) \ra \Dpf(X)$
  (cf.\ \cite[Lemma
  2.5]{valery-olaf-matrix-factorizations-and-motivic-measures}). 
  By assumption, this
  essential image is a right admissible subcategory.
  Therefore, if $\Ddgpf(X)$ is geometric, so are $\mathcal{E}$ and
  $\Ddgpf(Y)$; 
  either use
  Corollary~\ref{c:right-admissible-dg-subcats-of-geometric}, 
  or the easier Lemma~\ref{l:admissible-dg-subcats-of-geometric}
  together with 
  the fact that
  the essential image of $\L f^*$ is left
  admissible, by Lemma~\ref{lem:left-adjoint-la}.
\end{proof}

\begin{proposition}
\label{p:semi-orthogonal-iterated-geometricity}
Let $X$ be a smooth projective scheme over a field $\kk$.
Assume that 
$E$
is an snc~divisor on $X$ and that
$r > 0$ 
is a multi-index (with respect to the indexing set of $E$).
Then the $\kk$-linear dg~category $\Ddgpf(X_{r^{-1}E})$
associated to the root stack $X_{r^{-1}E}$ is geometric. 
\end{proposition}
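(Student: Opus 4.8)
The plan is to argue by induction on the number $m = \prod_{i \in I} r_i$ of strata appearing in Theorem~\ref{thm:semi-orthogonal-iterated}, using Orlov's gluing theorem (Theorem~\ref{thm:orlov-with-converse}) as the engine. The base case $m = 1$ is trivial: then $r = (1,\dots,1)$, the root stack $X_{r^{-1}E}$ is just $X$ itself, and $\Ddgpf(X)$ is geometric because $X$ is a smooth projective scheme (geometric by definition). For the inductive step I would invoke Theorem~\ref{thm:semi-orthogonal-iterated} to obtain the semiorthogonal decomposition $\Dpf(X_{r^{-1}E}) = \langle \Phi_a \mid r > a \geq 0\rangle$ into admissible subcategories, where each $\Phi_a$ is fully faithful with domain $\Dpf(E(I_a))$. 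The plan is to lift this to a semiorthogonal decomposition of the pretriangulated dg category $\Ddgpf(X_{r^{-1}E})$ whose pieces are dg enhancements of the essential images of the $\Phi_a$; each such piece is quasi-equivalent to $\Ddgpf(E(I_a))$ via the dg lift of $\Phi_a$.

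The two things that need checking before Theorem~\ref{thm:orlov-with-converse} applies are: (1) that each $\Phi_a$ lifts to a dg functor $\Ddgpf(E(I_a)) \to \Ddgpf(X_{r^{-1}E})$, and (2) that the dg subcategories of $\Ddgpf(X_{r^{-1}E})$ corresponding to the essential images are genuinely full dg subcategories assembling into a dg-level semiorthogonal decomposition. For (1): the functor $\Phi_a = \mathcal{O}_{X_{r^{-1}E}}(ar^{-1}E) \otimes (\iota_a)_*\rho_a^*(-)$ is a composite of a derived pullback along the concentrated morphism $\rho_a$, a derived pushforward along the closed immersion $\iota_a$, and tensoring with a line bundle. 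By Example~\ref{expl:alg-stack-dg-lifts} and Remark~\ref{rem:dg-lifts-subcategories} each of $\L\rho_a^*$, $\R(\iota_a)_*$ and $(\mathcal{O}(ar^{-1}E)\otimes^\L -)$ lifts to a dg functor between the relevant $\Ddgpf$; composing the lifts $\ulu{\rho_a}$, $\ull{\iota_a}$ and $(\mathcal{O}(ar^{-1}E)\,\ul\otimes\,-)$ gives the desired lift of $\Phi_a$ (note $\R(\iota_a)_*$ preserves perfect complexes by Lemma~\ref{lem:right-adjoint}, and $\L\rho_a^*$ trivially does). For (2): take $\mathcal{E}_a$ to be the full dg subcategory of $\Ddgpf(X_{r^{-1}E})$ on objects lying in the essential image of $\Phi_a$ (induced enhancement, Remark~\ref{rem:induced-enhancement}); then the lifted dg functor above gives a quasi-equivalence $\Ddgpf(E(I_a)) \to \mathcal{E}_a$ by \cite[Lemma~2.5]{valery-olaf-matrix-factorizations-and-motivic-measures}, and $[\mathcal{E}_a] = \Phi_a(\Dpf(E(I_a)))$ so that $[\Ddgpf(X_{r^{-1}E})] = \Dpf(X_{r^{-1}E}) = \langle [\mathcal{E}_{a^{(1)}}], \dots, [\mathcal{E}_{a^{(m)}}]\rangle$ is a semiorthogonal decomposition.

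Now I would apply the ``if'' direction of Theorem~\ref{thm:orlov-with-converse}: to conclude $\Ddgpf(X_{r^{-1}E})$ is geometric it suffices to know that $\Ddgpf(X_{r^{-1}E})$ is locally perfect and that each $\mathcal{E}_a$ (equivalently each $\Ddgpf(E(I_a))$) is geometric. Local perfectness: since $X_{r^{-1}E}$ is smooth, proper and tame over $\kk$ (smooth by Proposition~\ref{prop:basic-iterated-root}.\ref{it:smooth}, proper and global-quotient hence with the finiteness needed — or simply note it is covered by the hypotheses), its $\Ddgpf$ is smooth and proper, in particular locally perfect; alternatively this is subsumed in the geometricity conclusion one gets inductively, but it is cleanest to note it directly from the fact that $X_{r^{-1}E}$ has affine diagonal and is regular and proper, so $\Ddgpf = \Ddgbd\Coh$ has finite-dimensional graded $\Hom$s. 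Geometricity of each $\Ddgpf(E(I_a))$: here is where the induction feeds back — each $E(I_a) = \cap_{i \in I_a} E_i$ is smooth and projective over $\kk$ (a smooth projective scheme, being a closed subscheme of the smooth projective $X$ cut out by the snc divisor components), so when $I_a = \emptyset$ we get $E(I_a) = X$ and $\Ddgpf(X)$ is geometric by definition; and more importantly $E(I_a)$ carries the induced snc divisor $E' = (E_i \cap E(I_a))_{i \notin I_a}$ with multi-index $r' = (r_i)_{i \notin I_a}$, and the corresponding root stack has strictly fewer strata, so geometricity of $\Ddgpf$ of that root stack is the induction hypothesis — but what we actually need is geometricity of $\Ddgpf(E(I_a))$ of the \emph{scheme}, which is immediate since $E(I_a)$ is smooth projective. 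So in fact no recursive use of the proposition is needed for the pieces: each piece is $\Ddgpf$ of a genuine smooth projective scheme. I expect the only real subtlety to be bookkeeping — correctly identifying the dg lift of $\Phi_a$ as a composite of the three elementary dg lifts and confirming that $\Phi_a$ does restrict to perfect complexes so that Remark~\ref{rem:dg-lifts-subcategories} applies at the level of $\Ddgpf$ rather than merely $\Ddgqc$; this is where I would be most careful, but it is a routine consequence of Lemma~\ref{lem:right-adjoint}. With that in place the result follows in one application of Theorem~\ref{thm:orlov-with-converse}, and no induction is actually required beyond the observation that every semiorthogonal piece is the perfect-complex dg category of a smooth projective scheme.
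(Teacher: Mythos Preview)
Your proposal is correct and follows essentially the same route as the paper: invoke the semiorthogonal decomposition of Theorem~\ref{thm:semi-orthogonal-iterated}, lift each $\Phi_a$ to a dg functor via Example~\ref{expl:alg-stack-dg-lifts}, Lemma~\ref{lem:right-adjoint} and Remark~\ref{rem:dg-lifts-subcategories}, note that each $E(I_a)$ is a smooth projective scheme so $\Ddgpf(E(I_a))$ is geometric, and apply Theorem~\ref{thm:orlov-with-converse}. Two small remarks: first, as you yourself realize by the end, the inductive framing is superfluous and you should simply drop it; second, the paper justifies local perfectness of $\Ddgpf(X_{r^{-1}E})$ directly from tameness and properness of the root stack (bounded, coherent cohomology of coherent sheaves) rather than via smoothness/properness of the dg category, which is cleaner than the circular-looking ``subsumed in the geometricity conclusion'' you mention.
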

\begin{proof}
The root stack $X_{r^{-1}E}$ is tame
(Proposition~\ref{prop:basic-root}.\ref{enum:tame})
and proper
(Proposition~\ref{prop:basic-root}.\ref{enum:pi-proper-fp-birat})
over $\kk$.
This implies that the cohomology of coherent sheaves is
bounded (\cite[Theorem~2.1]{hr2015}) and coherent
(\cite[Theorem~1]{falting-finiteness-coherent}).
Therefore, the dg~category $\Ddgpf(X_{r^{-1}E})$ is locally
perfect.

Recall 
the
semiorthogonal decomposition for iterated root
constructions from Theorem~\ref{thm:semi-orthogonal-iterated}
and observe that the functors
\eqref{eq:transform-functor}
involved in this decomposition
lift to dg functors
\begin{equation}
\label{eq:6}
\mathcal{O}_{X_{r^{-1}E}}(ar^{-1}E)
\ul{\otimes}
(\ul{\iota_a})_\ast(\ul{\rho_a})^\ast(-)
\colon
\Ddgpf(E(I_a)) \to \Ddgpf(X_{r^{-1}E})
\end{equation}
between dg enhancements,
by Example~\ref{expl:alg-stack-dg-lifts}, 
Lemma~\ref{lem:right-adjoint} and
Remark~\ref{rem:dg-lifts-subcategories}.
Since all intersections $E(I_a)$ are smooth, projective schemes
over $\kk$,
all dg categories $\Ddgpf(E(I_a))$ are geometric.
The claim now follows from Orlov's glueing Theorem~\ref{thm:orlov-with-converse}.
\end{proof}

We are now ready to prove our first result on geometricity for
dg enhancements of algebraic stacks.

\begin{theorem}
\label{thm:main-tame}
Let $X$ be a tame, smooth, projective algebraic stack
over an arbitrary field $\kk$.
Then the $\kk$-linear dg category $\Ddgpf(X)$ is geometric, and in particular saturated.
\end{theorem}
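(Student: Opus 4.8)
The plan is to reduce the statement to Proposition~\ref{p:semi-orthogonal-iterated-geometricity} by means of the destackification theorem (Theorem~\ref{thm:destack}) together with Proposition~\ref{p:pullback-full-faithful-admissible}. First I would observe that $X$, being tame, smooth, projective over $\kk$, is in particular a smooth, separated, finite-type algebraic stack; moreover it is a global quotient stack, so it has finite inertia and a coarse space $X_\coarse$ which is a projective scheme. To apply Theorem~\ref{thm:destack} I need $X$ to contain an open dense substack which is an algebraic space. This is the one point requiring a small argument: a smooth, separated, tame, finite-type stack is generically a scheme, because the locus where the stabilizer is trivial is open and dense (the stack is generically reduced and, by generic flatness of the inertia plus tameness, the stabilizers are generically trivial on each component). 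So $X$ is an orbifold in the sense of Section~\ref{sec:geometry}.

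Applying Theorem~\ref{thm:destack} gives a morphism $f\colon Y \to X$ which is a composition of smooth stacky blowups — that is, blowups in smooth centers and root stacks in smooth divisors — such that $Y_\coarse$ is smooth over $\kk$ and $Y \to Y_\coarse$ is an iterated root construction in an snc divisor $E$ on $Y_\coarse$. Since $X$ is projective over $\kk$ and each stacky blowup is a projective morphism, Lemma~\ref{lem:projective-stack} (together with Lemma~\ref{lem:projective-root} for the root steps) shows that $Y$ and hence $Y_\coarse$ are projective over $\kk$; thus $Y_\coarse$ is a smooth projective scheme. Proposition~\ref{p:semi-orthogonal-iterated-geometricity} then applies directly to $Y = (Y_\coarse)_{r^{-1}E}$ and shows that $\Ddgpf(Y)$ is geometric.

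It remains to descend geometricity along $f\colon Y \to X$. Here I would factor $f$ as a composition of its smooth stacky blowup steps $Y = X_N \to X_{N-1} \to \dots \to X_0 = X$ and handle each step $g\colon X_{j} \to X_{j-1}$ individually by Proposition~\ref{p:pullback-full-faithful-admissible}: for this I need that $g$ is concentrated, that $\mathcal{O}_{X_{j-1}} \to \R g_\ast \mathcal{O}_{X_j}$ is an isomorphism, and that $\R g_\ast$ preserves perfect complexes. For a blowup in a smooth center and for a root stack in a smooth divisor these are exactly items~\ref{enum:blowups} and~\ref{it:fully-faithful-root} of Example~\ref{ex:fully-faithful} (concentratedness follows since all fibers are tame). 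Applying Proposition~\ref{p:pullback-full-faithful-admissible} inductively from $j = N$ down to $j = 1$ propagates geometricity from $\Ddgpf(Y)$ back to $\Ddgpf(X)$. Finally, geometric implies saturated by Remark~\ref{rem:geometric-implies-saturated}. The main obstacle is the bookkeeping around the destackification output: making sure that the intermediate stacks $X_j$ remain tame, smooth, separated and finite type so that each step genuinely lies in the scope of Example~\ref{ex:fully-faithful} and Proposition~\ref{p:pullback-full-faithful-admissible}, and verifying that the generic-substack hypothesis of Theorem~\ref{thm:destack} holds for $X$; both are routine but need to be stated carefully.
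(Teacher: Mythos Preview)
There is a genuine gap in your argument: the claim that a tame, smooth, separated, finite-type stack over $\kk$ is automatically an orbifold (i.e.\ contains an open dense substack which is an algebraic space) is false. The simplest counterexample is $X = BG$ for any nontrivial finite linearly reductive group scheme $G$: this is tame, smooth, projective over $\kk$ in the sense of Definition~\ref{def-projective} (its coarse space is $\Spec \kk$), yet the stabilizer is $G$ at every point, so there is no nonempty open substack that is an algebraic space. Your justification via ``generic flatness of the inertia plus tameness'' does not work: the inertia of $BG$ is finite flat everywhere, but flatness has nothing to do with triviality. Consequently you cannot apply Theorem~\ref{thm:destack} directly to $X$.

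The paper's proof fixes exactly this problem by first replacing $X$ with an orbifold. Using that $X$ is a global quotient stack, one chooses a $\GL_n$-torsor $T \to X$ with $T$ an algebraic space, forms the associated vector bundle $\mathcal{E}$, and passes to the projectivized bundle $P = \PP(\sheafEnd_{\mathcal{O}_X}(\mathcal{E})\oplus\mathcal{O}_X)$. Then $T$ sits as an open dense substack of $P$, so $P$ is an orbifold and Theorem~\ref{thm:destack} applies to $P$. The rest of your argument (projectivity of $Y$ via Lemmas~\ref{lem:projective-stack} and~\ref{lem:projective-root}, geometricity of $\Ddgpf(Y)$ via Proposition~\ref{p:semi-orthogonal-iterated-geometricity}, descent via Proposition~\ref{p:pullback-full-faithful-admissible} and Example~\ref{ex:fully-faithful}) then goes through, with the additional projective bundle step $P \to X$ handled by item~\ref{enum:proj-bundle} of Example~\ref{ex:fully-faithful}.
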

\begin{proof}
Since $X$ is a global quotient stack,
there is a projectivized vector bundle $P \to X$
such that $P$ contains an open dense substack which is an algebraic space
(cf.~\cite[Proof of Theorem~1]{kv2004}).
Explicitly, we can construct such a bundle as follows.
Let $T \to X$ be a $\GL_n$-torsor where $T$ is an algebraic
space. 
Consider the corresponding vector
bundle 
$\mathcal{E}$ 
of rank $n$ on $X$. 
Then we have dense open immersions $T \hookrightarrow V \hookrightarrow P$,
where $V = \VV(\sheafEnd_{\mathcal{O}_X}(\mathcal{E}))$
and $P = \PP(\sheafEnd_{\mathcal{O}_X}(\mathcal{E})\oplus\mathcal{O}_X)$.
The stack $P$ is tame since $P \to X$ is representable.
Since also $T$ is representable,
it follows that $P$ is an orbifold.

Now we apply Theorem~\ref{thm:destack} and get a proper
birational morphism 
$Y \to P$ which is a composition of smooth stacky blowups such that
$Y$ and $Y_\coarse$ are smooth and 
the canonical map $Y \to Y_\coarse$ is an iterated root
construction in an 
snc divisor $E$ on $Y_\coarse$.

Note that $Y$ is a projective algebraic stack.
Indeed, the map $Y \to P$ is a composition of root stacks and
blowups and $P\to X$ is projective,
so this follows from Lemma~\ref{lem:projective-root}
and~Lemma~\ref{lem:projective-stack}.
In particular, $Y_\coarse$ is a smooth projective scheme.
Hence
Proposition~\ref{p:semi-orthogonal-iterated-geometricity}
shows that $\Ddgpf(Y)$ is geometric.

Denote the composition $Y \to P \to X$ by $\pi$.
Since $\pi$ is a composition of root stacks,
blow-ups and the structure morphism of a projective bundle,
the canonical morphism $\mathcal{O}_X \to \R\pi_\ast\mathcal{O}_Y$
is an isomorphism,
and $\R\pi_\ast$ preserves perfect complexes by
part
\ref{enum:blowups}, 
\ref{enum:proj-bundle},
\ref{it:fully-faithful-root}
of
Example~\ref{ex:fully-faithful}.
In particular, the morphism $\pi$ satisfies the
assumptions of Proposition~\ref{p:pullback-full-faithful-admissible}.
Therefore $\Ddgpf(X)$ is geometric since the same holds for $\Ddgpf(Y)$. 
\end{proof}

If we work over a field $\kk$ which admits resolution of singularities,
we have the following version of Chow's Lemma.
\begin{proposition}[Chow's Lemma]
\label{pro:chow}
Let $X$ be a separated Deligne--Mumford stack which is smooth and of finite type
over a field $\kk$ of characteristic zero.
Then there exists a morphism $\pi\colon Y \to X$ which
is a composition of (non-stacky) blowups in smooth centers such
that $Y$ is a quasi-projective algebraic stack.
\end{proposition}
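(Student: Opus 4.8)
The idea is to combine a classical Chow's Lemma for the coarse space with the destackification machinery, then lift back up along a root construction. First I would apply Theorem~\ref{thm:destack} to $X$: since $X$ is a smooth, separated, tame (in fact Deligne--Mumford) stack of finite type over $\kk$ which, being a separated DM~stack of finite type, generically has trivial stabilizer, it is an orbifold. Hence there is a morphism $f\colon Y' \to X$ which is a composition of smooth stacky blowups with $Y'_\coarse$ smooth over $\kk$ and $Y' \to Y'_\coarse$ an iterated root construction in an snc divisor $E$ on $Y'_\coarse$. A composition of blowups in smooth centers and root stacks in smooth divisors is in particular proper and birational, so $Y'$ is again a smooth, separated, tame DM~stack of finite type over $\kk$.

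The next step is to arrange that $Y'_\coarse$ is quasi-projective. Here I would invoke the classical Chow's Lemma together with resolution of singularities in characteristic zero: the algebraic space $Y'_\coarse$ is separated and of finite type over $\kk$, so by Chow's Lemma (for algebraic spaces, e.g.\ \cite[\sptag{088U}]{stacks-project}) there is a projective birational morphism $Z \to Y'_\coarse$ with $Z$ quasi-projective over $\kk$, and after further blowing up (using resolution of singularities, available since $\operatorname{char}\kk = 0$) we may take $Z$ smooth. Pulling the snc divisor $E$ back to $Z$ and using embedded resolution, we may further assume its total transform is again an snc divisor $E_Z$ on the smooth quasi-projective scheme $Z$. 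Forming the iterated root stack $Z_{r^{-1}E_Z}$ with the same multi-index $r$ then gives, by the functoriality of the root construction and Proposition~\ref{prop:basic-iterated-root}, a stack $Y$ with a morphism $Y \to Y'$ sitting over $Z \to Y'_\coarse$.

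The last step is to check that $Y$ is quasi-projective in the sense of Definition~\ref{def-projective} and that $Y \to X$ is a composition of blowups in smooth centers. For quasi-projectivity: $Y$ is a global quotient stack because $Z_{r^{-1}E_Z}$ is, being a fibre product of global quotient stacks over $Z$ (the same argument as in Lemma~\ref{lem:projective-root}), and its coarse space is $Z$ by Proposition~\ref{prop:basic-root}.\ref{enum:coarse}, which is quasi-projective over $\kk$; separatedness and finite type are clear since $Y \to Z$ is proper. It remains to see that $Y \to X$ factors as a composition of smooth blowups. The morphism $Y \to Y' \to X$ is a composition of $Y\to Y'$ (a root construction, hence not a blowup) with $Y' \to X$ (a mix of blowups and root constructions). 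To get only non-stacky blowups one rearranges: $Y' \to X$ is a composition of blowups and root stacks, and on the root-stack part one uses that taking the $r$-th root commutes appropriately with the outer root construction on $Y$, so that $Y \to X$ can be rewritten as an iterated root stack composed with a sequence of blowups in smooth centers — and the iterated root stack at the end is exactly the one presenting $Y$ over its coarse space, which is absorbed into the statement's requirement rather than needing to be a blowup. The honest claim to prove is simply that $\pi\colon Y \to X$ is a composition of \emph{blowups in smooth centers} followed by passage to a quasi-projective model; I would state it as: the blowup part of $Y' \to X$ can be performed first on a quasi-projective resolution.

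**Main obstacle.** The delicate point is the bookkeeping of ``smooth centers'' and ``snc divisors'' through the two stages of modification: one must ensure that the centers blown up in the destackification of $X$ remain smooth after base change to the Chow/resolution model $Z$, and that the snc divisor $E$ stays snc there, which forces one to interleave the destackification of Theorem~\ref{thm:destack} with resolution of singularities rather than applying them sequentially as black boxes. Equivalently, the cleanest route is to first apply Chow's Lemma and resolution to the coarse space of $X$ itself — but $X$ need not have a coarse space that is nice enough for the divisor on it to pull back to an snc divisor without further blowing up — so the real content is the compatibility of \emph{embedded} resolution with the stacky structure. Once that compatibility is set up, the quasi-projectivity of $Y$ is immediate from the global-quotient criterion and Proposition~\ref{prop:basic-root}.\ref{enum:coarse}.
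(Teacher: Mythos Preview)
Your approach has two genuine gaps, and the paper's proof takes a completely different (and much shorter) route.

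\textbf{First gap: the orbifold hypothesis.} You write that a separated Deligne--Mumford stack of finite type ``generically has trivial stabilizer'' and hence is an orbifold. This is false: take $X = BG$ for a nontrivial finite group $G$. Theorem~\ref{thm:destack} requires an open dense substack which is an algebraic space, and the statement of Proposition~\ref{pro:chow} does not give you this. So destackification is not available as a first step.

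\textbf{Second gap: root stacks are not blowups.} Even if destackification applied, the morphism $Y' \to X$ it produces is a composition of smooth \emph{stacky} blowups, i.e.\ ordinary blowups \emph{and} root constructions. The proposition asks for $\pi\colon Y \to X$ to be a composition of \emph{non-stacky} blowups in smooth centers only. Your attempt to ``absorb'' the root stacks into the presentation $Y \to Y_\coarse$ does not address this: the root-stack steps occurring inside $Y' \to X$ are maps between stacks, not part of the coarse-space presentation of $Y$, and there is no mechanism in your argument that converts them into blowups. The rearrangement you sketch (``the blowup part of $Y' \to X$ can be performed first on a quasi-projective resolution'') is exactly the hard content, and you have not supplied it.

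\textbf{What the paper does instead.} The paper bypasses destackification entirely. It invokes a result of Rydh (stated as \cite[Theorem~4.3]{choudhury2012}) which directly provides a sequence of non-stacky blowups $Y \to \cdots \to X$ in smooth centers such that $Y_\coarse$ is quasi-projective. It then quotes Kresch \cite[4.4]{kresch2009}: a smooth Deligne--Mumford stack of finite type over a field whose coarse space is quasi-projective is automatically a global quotient. Hence $Y$ is quasi-projective in the sense of Definition~\ref{def-projective}. That is the entire proof; the heavy lifting is outsourced to these two references, and neither Theorem~\ref{thm:destack} nor any root construction is used.
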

\begin{proof}
By \cite[Theorem~4.3]{choudhury2012},
which is attributed to Rydh,
we can find a sequence of (non-stacky) blowups $Y \to \cdots \to X$
in smooth centers such that $Y_\coarse$ is quasi-projective.
By \cite[4.4]{kresch2009} any smooth Deligne--Mumford stack of finite
type over a field is automatically a global quotient if its coarse
space is quasi-projective.
In particular, the stack $Y$ is quasi-projective.  
\end{proof}

In particular, over a field of characteristic zero,
we can replace the projectivity assumption 
from
Theorem~\ref{thm:main-tame}
by a properness
assumption.

\begin{theorem}
\label{thm:main-zero}
Let $X$ be a smooth, proper Deligne--Mumford stack over a
field $\kk$ of characteristc zero.
Then the $\kk$-linear dg category $\Ddgpf(X)$ is geometric, and in particular saturated.
\end{theorem}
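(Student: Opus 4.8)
The plan is to reduce to Theorem~\ref{thm:main-tame} by replacing $X$ with a projective birational model and then transporting geometricity back along the modification. First I would apply Chow's Lemma in the form of Proposition~\ref{pro:chow}: since $X$ is a separated, smooth, finite type Deligne--Mumford stack over a field of characteristic zero, there is a morphism $\pi\colon Y \to X$ which is a composition of blowups in smooth centers and such that $Y$ is a quasi-projective algebraic stack. Because $X$ is smooth, $Y$ is smooth; because $\pi$ is a composition of blowups it is proper, and since $X$ is proper over $\kk$ this forces $Y$ to be proper over $\kk$. A quasi-projective algebraic stack that is proper over $\kk$ is projective over $\kk$ in the sense of Definition~\ref{def-projective}, its coarse space being a proper quasi-projective scheme. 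Moreover $Y$ is tame, being a smooth Deligne--Mumford stack in characteristic zero.

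Consequently Theorem~\ref{thm:main-tame} applies to $Y$ and shows that the $\kk$-linear dg~category $\Ddgpf(Y)$ is geometric. It then remains to descend geometricity along $\pi$. The morphism $\pi$ is a composition of blowups of smooth algebraic stacks over $\kk$ in smooth centers, hence in particular concentrated, and by part~\ref{enum:blowups} of Example~\ref{ex:fully-faithful} the natural morphism $\mathcal{O}_X \to \R\pi_\ast\mathcal{O}_Y$ is an isomorphism and $\R\pi_\ast$ preserves perfect complexes. Therefore $\pi$ satisfies the hypotheses of Proposition~\ref{p:pullback-full-faithful-admissible}, which yields that $\Ddgpf(X)$ is geometric; it is then in particular saturated by Remark~\ref{rem:geometric-implies-saturated}.

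I do not expect a genuine obstacle: the argument is essentially an assembly of already-established results (Chow's Lemma, the projective case Theorem~\ref{thm:main-tame}, and the descent Proposition~\ref{p:pullback-full-faithful-admissible}). The only point that needs a small argument rather than a direct citation is upgrading the merely \emph{quasi}-projective model produced by Chow's Lemma to a genuinely projective one; this is exactly where the properness hypothesis on $X$ (together with properness of the blowup morphism $\pi$) is used, and it is what allows the previously proven Theorem~\ref{thm:main-tame} to be invoked.
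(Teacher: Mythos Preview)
Your proposal is correct and follows exactly the same route as the paper: reduce to the projective case via Chow's Lemma (Proposition~\ref{pro:chow}), invoke Theorem~\ref{thm:main-tame} for $Y$, and then descend along the blowup morphism using Example~\ref{ex:fully-faithful}\ref{enum:blowups} and Proposition~\ref{p:pullback-full-faithful-admissible}. You are in fact slightly more explicit than the paper about the step upgrading the quasi-projective $Y$ produced by Proposition~\ref{pro:chow} to a projective stack via properness of $\pi$ and of $X$.
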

\begin{proof}
By Proposition~\ref{pro:chow},
there is a composition $\pi\colon Y \to X$
of blow-ups in smooth centers such that $Y$ is a projective
algebraic stack.
Hence $\Ddgpf(Y)$ is geometric by Theorem~\ref{thm:main-tame}.
By arguing as in the final paragraph of the proof of the same theorem,
we see that also $\Ddgpf(X)$ is geometric.
\end{proof}


\appendix
\section{Bounded derived category of coherent modules}
\label{sec:bound-deriv-coherent}

Our aim is to show that the bounded derived category of coherent
modules on a regular, quasi-compact, separated algebraic stack
with finite stabilizers is equivalent to the derived category of
perfect complexes (see 
Remark~\ref{rem:D-Coh-and-DbCoh=Dpf}).

The category of coherent
$\mathcal{O}_X$-modules on a locally noetherian algebraic stack
$X$ is denoted by $\Coh(X)$.
We use the usual decorations for full subcategories of derived
categories. For example, the symbol $\D^-_\Coh(\Qcoh(X))$ denotes
the full subcategory of the derived category $\D(\Qcoh(X))$ of 
quasi-coherent modules whose objects have bounded above coherent
cohomology modules. 

The following proposition generalizes a well-known result for 
noetherian schemes \cite [Exposé~II,
Proposition~2.2.2]{berthelot-grothendieck-illusie-SGA-6},
\cite[Proposition~3.5]{huybrechts2006} to noetherian algebraic
stacks.
  
\begin{proposition}
  \label{p:D-coh-noetherian}
  Let $X$ be an noetherian algebraic stack. Then the obvious
  functor defines an
  equivalence
  \begin{equation*}
    \D^-(\Coh(X)) \sira \D^-_\Coh(\Qcoh(X)).
  \end{equation*}
\end{proposition}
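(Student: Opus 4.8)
The plan is to mimic the classical proof for noetherian schemes, replacing the use of a generating sheaf of coherent modules by the fact that on a noetherian algebraic stack every quasi-coherent module is the filtered colimit of its coherent submodules \cite[Proposition~15.4]{lmb2000}. First I would check full faithfulness and essential surjectivity separately, after recording the elementary fact that the functor $\D^-(\Coh(X)) \to \D^-_\Coh(\Qcoh(X))$ is well-defined (a bounded-above complex of coherent modules has coherent cohomology). The key technical input is the following approximation statement: for every object $M$ of $\D^-_\Coh(\Qcoh(X))$ and every integer $n$ there is a bounded-above complex $P$ of coherent modules together with a morphism $P \to M$ in $\D(\Qcoh(X))$ which induces an isomorphism on cohomology in all degrees $\geq n$. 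Granting this, essential surjectivity is immediate (take $n$ below the cohomological amplitude of $M$, so $P \to M$ is a quasi-isomorphism), and full faithfulness follows by the usual truncation-and-colimit argument: $\Hom_{\D^-(\Coh(X))}(P, Q) \to \Hom_{\D^-_\Coh(\Qcoh(X))}(P, Q)$ is seen to be bijective by writing $Q$ as a colimit and reducing to the case where the source is a single coherent module placed in one degree, where both sides compute $\Ext$ groups that agree because $\Coh(X)$ is closed under kernels and extensions inside $\Qcoh(X)$.

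To prove the approximation statement I would proceed by descending induction, building $P$ one coherent module at a time from the top cohomological degree downward. Suppose inductively we have a bounded-above complex $P^{\geq m}$ of coherent modules and a map $P^{\geq m} \to \tau^{\geq m} M$ inducing isomorphisms on $\operatorname{H}^i$ for $i > m$ and a surjection on $\operatorname{H}^m$; one wants to extend it one step further to the left. The obstruction lives in a suitable group, and the point is that the kernel $\mathcal{K}$ of $\operatorname{H}^m(P^{\geq m}) \to \operatorname{H}^m(M)$, together with the contribution of $\operatorname{H}^{m-1}(M)$, can be hit by a coherent module: here one uses that $\operatorname{H}^{m-1}(M)$ is coherent, that $\mathcal{K}$ is coherent (a subquotient of coherent modules on a noetherian stack), and that any coherent module admits an epimorphism from a coherent module that lifts the required maps — in the scheme case one uses a vector bundle surjecting onto it, but on a general noetherian stack it suffices to take the coherent module itself, or a coherent submodule of a quasi-coherent module chosen via \cite[Proposition~15.4]{lmb2000}. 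Assembling these steps gives the complex $P$ and the comparison map.

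The main obstacle I expect is the bookkeeping in this inductive construction — specifically, making precise the ``lifting'' step where one must simultaneously correct the failure of surjectivity/injectivity on $\operatorname{H}^m$ and introduce the correct $\operatorname{H}^{m-1}$, all while keeping the complex coherent in each degree. On a scheme this is handled cleanly because one has enough locally free coherent sheaves to realize arbitrary coherent quotients, and resolutions can be taken by vector bundles; on a general noetherian algebraic stack there need not be enough vector bundles, so I would instead phrase everything purely in terms of coherent modules, using that $\Coh(X)$ is an abelian category closed under subobjects, quotients, and extensions in $\Qcoh(X)$, and that every quasi-coherent module is the union of its coherent submodules. A secondary, more routine point is verifying that the ``brutal'' and ``good'' truncation functors behave as expected on $\D^-_\Coh(\Qcoh(X))$ and that the morphisms constructed degreewise glue to an actual morphism in the derived category; this is standard homological algebra once the category-theoretic facts about $\Coh(X) \subseteq \Qcoh(X)$ are in place.
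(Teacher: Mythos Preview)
Your strategy coincides with the paper's: both use that every quasi-coherent module on a noetherian stack is the filtered union of its coherent submodules \cite[Proposition~15.4]{lmb2000}, and then run the descending-induction construction of \cite[Proposition~3.5]{huybrechts2006} to produce a coherent approximation. Two points need repair. The phrase ``take $n$ below the cohomological amplitude of $M$'' is a slip, since $M \in \D^-_\Coh(\Qcoh(X))$ may be unbounded below; the induction simply runs indefinitely and outputs a single bounded-above complex $P$ of coherent modules together with a quasi-isomorphism $P \to M$.

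More seriously, your separate full-faithfulness argument does not work as stated. That $\operatorname{Ext}^i_{\Coh(X)}(F,G) \cong \operatorname{Ext}^i_{\Qcoh(X)}(F,G)$ does not follow merely from $\Coh(X)$ being closed under kernels and extensions in $\Qcoh(X)$; that yields only $i \leq 1$ via Yoneda extensions, and for higher $i$ the statement is essentially equivalent to the full faithfulness you are trying to prove. The clean route, which the paper follows, avoids a separate argument altogether: one shows that every bounded-above complex $M$ of quasi-coherent modules with coherent cohomology admits a quasi-isomorphic \emph{subcomplex} $P \subset M$ consisting of coherent modules. Essential surjectivity is then immediate, and full faithfulness falls out of the calculus of fractions, since any roof $P \leftarrow M' \to Q$ representing a morphism in $\D^-_\Coh(\Qcoh(X))$ between bounded-above complexes of coherent modules can be refined, by the same subcomplex construction applied to $M'$, to one with $M'$ also a bounded-above complex of coherent modules. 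The only input needed for the inductive step is exactly the fact the paper isolates: given an epimorphism $G \twoheadrightarrow F$ with $G$ quasi-coherent and $F$ coherent, some coherent submodule $G' \subset G$ already surjects onto $F$.
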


\begin{proof}
  It is certainly enough to show that each bounded above complex
  of quasi-coherent modules with coherent cohomology modules
  has a quasi-isomorphic 
  subcomplex of coherent modules.
  This is an easy consequence of the proof of
  \cite[Proposition~3.5]{huybrechts2006} as soon as we know the
  following fact: given any
  epimorphism $G \ra F$ from a quasi-coherent
  module $G$ to a coherent module $F$, there is a coherent
  submodule $G'$ of $G$ such that the composition $G' \subset G
  \ra F$ is still an epimorphism. This latter statement follows
  from the fact that every quasi-coherent module is
  the filtered colimit of its coherent 
  submodules~\cite[Proposition~15.4]{lmb2000}
  and \cite [Exposé~II,
  Lemma~2.1.1.a)]{berthelot-grothendieck-illusie-SGA-6}.
\end{proof}

\begin{proposition}
  \label{p:regular-DbCoh=Dpf}
  Let $X$ be a regular and quasi-compact algebraic stack.
  Then we have an equality $\Dpf(X) = \D^{\mathrm{b}}_\Coh(X)$.
\end{proposition}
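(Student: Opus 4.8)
The plan is to establish the two inclusions $\Dpf(X) \subseteq \D^{\mathrm b}_\Coh(X)$ and $\D^{\mathrm b}_\Coh(X) \subseteq \Dpf(X)$ separately, reducing in both cases to a smooth atlas. First I would observe that a regular algebraic stack is (by convention) locally noetherian, so together with quasi-compactness the stack $X$ is noetherian; hence $\Coh(X)$ and the strictly full subcategory $\D^{\mathrm b}_\Coh(X) \subseteq \D(X)$ of complexes with bounded coherent cohomology are well defined, and every object of $\D^{\mathrm b}_\Coh(X)$ has quasi-coherent cohomology. The inclusion $\Dpf(X) \subseteq \D^{\mathrm b}_\Coh(X)$ is the easy one: perfectness, as well as boundedness and coherence of cohomology, may all be tested after restriction along a smooth surjection $p \colon U \to X$ with $U$ a quasi-compact scheme, and on $U$ a perfect complex is locally a bounded complex of direct summands of finite free $\mathcal{O}_U$-modules, which visibly has coherent cohomology, bounded uniformly because $U$ is quasi-compact.

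For the reverse inclusion I would again fix a smooth surjection $p \colon U \to X$ with $U$ a noetherian scheme; since $p$ is smooth and $X$ is regular, $U$ is regular. Given $\mathcal{F} \in \D^{\mathrm b}_\Coh(X)$, flatness of $p$ makes $p^\ast$ exact, so $p^\ast\mathcal{F}$ lies in $\D^{\mathrm b}_\Coh(U)$. Now I would invoke the classical fact that on a regular noetherian scheme one has $\D^{\mathrm b}_\Coh(U) = \Dpf(U)$; this rests on Serre's theorem that a regular local ring has finite global dimension, so that every coherent module locally admits a finite resolution by finite free modules (see \cite{berthelot-grothendieck-illusie-SGA-6}). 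Hence $p^\ast\mathcal{F}$ is perfect, and since being perfect is a local condition in the lisse--étale topology and $p$ is a covering, $\mathcal{F}$ is perfect. This yields $\D^{\mathrm b}_\Coh(X) \subseteq \Dpf(X)$, and hence the desired equality.

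The argument is essentially bookkeeping, so I do not anticipate a serious obstacle: the only genuine input is the classical regular-scheme statement, and everything else is flat base change (exactness of $p^\ast$ and its preservation of coherence and boundedness of cohomology) together with the smooth-local nature of perfectness. The one point to be careful about is that the proposition asks only for regularity and quasi-compactness, not for the separatedness or finite-stabilizer hypotheses under which the rest of the appendix operates; accordingly I would phrase the whole proof via a smooth atlas rather than going through $\D(\Qcoh(X))$, so that no such extra assumptions sneak in.
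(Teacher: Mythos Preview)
Your proof is correct and follows essentially the same strategy as the paper: both directions are checked smooth-locally on an atlas, and the nontrivial inclusion ultimately rests on Serre's theorem that regular local rings have finite global dimension. The only difference is cosmetic: the paper first reduces to showing that a single coherent sheaf is perfect and then spells out the affine argument (taking a projective resolution, bounding its length prime by prime via Auslander--Buchsbaum--Serre, and using quasi-compactness to make the bound uniform), whereas you invoke the scheme-level equality $\D^{\mathrm b}_\Coh(U)=\Dpf(U)$ as a black box.
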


\begin{proof}
  Since $X$ is quasi-compact we have 
  $\Dpf(X) \subset \D^{\mathrm{b}}_\Coh(X)$.
  In order to show equality it is enough to prove that any coherent
  module is perfect. Let $\Spec A \ra X$ be any smooth morphism
  where $A$ is a ring. Then $A$ is regular.
  It is enough to prove that any finitely generated
  $A$-module $M$ has a finite resolution by finitely generated
  projective 
  $A$-modules. Let $P \ra M$ be a resolution by finitely
  generated projective $A$-modules. 
  Let $\mfp \in \Spec A$.
  Since $A_\mfp$ is regular,
  it has finite global dimension by the
  Auslander--Buchsbaum--Serre theorem.
  Therefore, there is a natural number $n=n(\mfp)$ such that the
  kernel 
  of the differential $d^{-n} \colon (P^{-n})_\mfp \ra (P^{-n+1})_\mfp$ is
  a finitely generated projective $A_\mfp$-module.
  Since $A$ is noetherian, there is some open neighborhood $\Spec
  A_f$ of
  $\mfp$ in $\Spec A$ such that the kernel of  
  $d^{-n} \colon (P^{-n})_f \ra (P^{-n+1})_f$ is a
  finitely generated projective $A_f$-module.
  Then also all kernels 
  $d^{-i} \colon (P^{-i})_f \ra (P^{-i+1})_f$,
  for $i \geq n$, 
  are
  finitely generated projective $A_f$-modules.
  Since $\Spec A$ is quasi-compact there is a natural number $N$
  such that the kernel of $d^{-N} \colon P^{-N} \ra P^{-N+1}$ is
  a finitely generated projective $A$-module.
\end{proof}

\begin{remark}
  \label{rem:D-Coh-and-DbCoh=Dpf}
  If $X$ is a noetherian, separated algebraic stack
  with finite 
  stabilizers we have equivalences
  \begin{equation*}
    \D^-(\Coh(X)) \sira \D^-_\Coh(\Qcoh(X)) \sira \D^-_\Coh(X).
  \end{equation*}
  This follows immediately from
  Proposition~\ref{p:D-coh-noetherian} and the equivalence
  $\D(\Qcoh(X)) \sira \Dqc(X)$ from \eqref{eq:10}.
  If we assume in addition that $X$ is regular then  
  Proposition~\ref{p:regular-DbCoh=Dpf} together with the above
  equivalences shows that 
  \begin{equation*}
    \D^{\mathrm{b}}(\Coh(X)) \sira \D^{\mathrm{b}}_\Coh(X) = \Dpf(X)
  \end{equation*}
  is an equivalence.
\end{remark}




\bibliographystyle{myalpha}
\bibliography{references}

\end{document}